\documentclass[a4paper]{article}
\usepackage[margin=1in]{geometry}
\usepackage[utf8]{inputenc}
\usepackage{amsmath}
\usepackage{extarrows}
\usepackage{amsthm}

\newtheorem{theorem}{Theorem}
\newtheorem{lemma}{Lemma}

\newtheorem{ass}{Assumption}

\newtheorem{rem}{Remark}
\newtheorem{inequality}{Inequality}
\newtheorem{corollary}{Corollary}

\newcommand{\RNum}[1]{\uppercase\expandafter{\romannumeral #1\relax}}
\usepackage{graphicx}
\usepackage{amssymb}
\usepackage[authoryear]{natbib}
\usepackage[colorlinks,citecolor=blue]{hyperref}
\usepackage[noend]{algpseudocode}
\usepackage{algorithmicx,algorithm}
\usepackage{xcolor}

\usepackage{bm}
\usepackage{appendix}
\usepackage{siunitx}
\usepackage{longtable,tabularx}
\usepackage{fontenc}
\usepackage{inputenc}
\usepackage{verbatim}
\usepackage{cases}
\usepackage{authblk}

\newcommand{\hatalpha}[1]{{\hat{\bm{\alpha}}}_{#1}}
\newcommand{\hatbeta}[1]{\hat{\beta}_{#1}}
\newcommand{\K}{L^{\frac{1}{2}}_{K}}

\newcommand{\hatfn}{\hat{f}_{n}}
\newcommand{\sumn}{\sum\limits_{i=1}^{n}}
\newcommand{\suminf}{\sum\limits_{k=1}^{+\infty}}
\newcommand{\innerproduct}[2]{\langle #1,#2 \rangle}
\newcommand{\opnorm}[1]{\|#1\|_{\operatorname{op}}}
\newcommand{\coeffone}{\omega^{-1}+\omega^{-\frac{1+\theta}{2}}\sqrt{c}}
\newcommand{\E}{\mathrm{E}}

\begin{document}
\title{Non-asymptotic Optimal Prediction Error for Growing-dimensional Partially Functional Linear Models}
\author[a,b]{Huiming Zhang}
\author[c]{Xiaoyu Lei\footnote{Correspondence author. Email: \texttt{leixy@uchicago.edu} (Xiaoyu Lei)}}
\affil[a]{Department of Mathematics, Faculty of Science and Technology, University of Macau, Macau, China.}
\affil[b]{Zhuhai UM Science \& Technology Research Institute, Zhuhai, China}
\affil[c]{Department of Statistics, University of Chicago, USA.}

\date{\today}
\maketitle

\begin{abstract}
Under the reproducing kernel Hilbert spaces (RKHS), we consider the penalized least-squares of the partially functional linear models (PFLM), whose predictor contains both functional and traditional multivariate parts, and the multivariate part allows a divergent number of parameters. From the non-asymptotic point of view, we focus on the rate-optimal upper and lower bounds of the prediction error. An exact upper bound for the excess prediction risk is shown in a non-asymptotic form under a more general assumption known as the effective dimension to the model, by which we also show the prediction consistency when the number of multivariate covariates $p$ slightly increases with the sample size $n$. Our new finding implies a trade-off between the number of non-functional predictors and the effective dimension of the kernel principal components to ensure prediction consistency in the increasing-dimensional setting. The analysis in our proof hinges on the spectral condition of the sandwich operator of the covariance operator and the reproducing kernel, and on sub-Gaussian and Berstein concentration inequalities for the random elements in Hilbert space. Finally, we derive the non-asymptotic minimax lower bound under the regularity assumption of the Kullback-Leibler divergence of the models.
\end{abstract}

\textbf{Keywords}:partially functional linear models; sub-Gaussian and Berstein concentration inequalities in Hilbert space; reproducing kernel Hilbert space; non-asymptotic bound; minimax rate; diverging number of covariates.



\section{Introduction}\label{sec-1}
\hspace{4mm} Statistical analysis of functional data has become an important and challenging part in modern statistics since the leading work \cite{ramsay1982data} and pioneering paper \cite{grenander1950stochastic}. Due to technological innovation, the progress in data storage enables scientists to acquire complex data sets with the structures of curves, images, or other data with functional structures, referred to as functional data. Functional data analysis has a wide range of applications, including chemometrics, econometrics, and biomedical studies \cite{ramsay2007applied,kokoszka2017introduction}. There has been a large amount of works now focusing on many different non-parametric aspects of functional data such as kernel ridge regressions \cite{cai2006prediction,preda2007regression,du2014penalized,reimherr2018optimal}, penalized B-spline regressions \cite{cardot2003spline}, functional principal component regressions \cite{yao2005functional}, local linear regressions \cite{balllo2009local}, and reader can refer to the review paper \cite{wang2016functional} for more details.

\par Many existing works considering the estimation and prediction problems of functional data are based on the framework of functional principal component analysis (FPCA), see \cite{yao2005functional,cai2006prediction,hall2007methodology,zhou2022functional}. However, the predictive power of FPCA-based methods is weakened when the functional principal components cannot form an effective basis for the slope function, which often occurs in practice. A similar phenomenon also appears in principal component regressions, see \cite{jolliffe1982note}. An alternative method for the functional data is based on the reproducing kernel Hilbert spaces (RKHS) framework, which assumes the slope function is contained in an RKHS. It is shown in \cite{cai2012minimax} that the RKHS-based method performs better than the FPCA-based method when the slope function does not align well with the eigenfunctions of the covariance kernel. In fact, FPCA strongly relies on the leading principal scores with large eigenvalues correspondingly, and the eigenfunctions for representing the slope function inevitably lose some information for the response. From the view of machine learning theory, the FPCA is essentially a non-supervised method that often performs poorly in data analysis. For example, the analysis of Canadian weather data mentioned in \cite{cai2012minimax} and the Section 3 of \cite{cui2020partially}.

\par In this paper, we study the partially functional linear models (FPLM) containing both functional and multivariate parts in the predictor, which is originally considered in \cite{shin2009partial}. Let $\bm{X}=(X_{1},\cdots,X_{p})^{T}$ be the $p$-dimensional multivariate predictor, $Y(t)$ be the functional predictor, $\varepsilon$ be the random noise and $Z$ be the scalar response. In our work, we consider the PFLM taking the semi-parametric form
\begin{equation}\label{2-1}
Z=\bm{X}^{T}\bm{\alpha}_{0}+\int_{\mathcal{T}}Y(t)\beta_0(t)\mathrm{d}t+\varepsilon,
\end{equation}
where the $\beta_0(t)$ is the slope function for functional predictor and the $\bm{\alpha}_{0}$ is the regression coefficient for multivariate predictor. The model \eqref{2-1} contains both parametric and non-parametric part, which belongs to semi-parametric statistics. We assume the predictor to be the random design where $\bm{X}$ and $Y(t)$ are independent. Because the intercepts of predictor are easy to estimate by centralizing, for simplicity, we assume
\begin{center}
 $\operatorname{E}\bm{X}=\bm{0}$ and $\operatorname{E}Y(t)=0$.
\end{center}
Moreover, we assume the random noise $\varepsilon$ has conditional zero mean and finite variance provided the predictor $\bm{X}$ and $Y$.

For real data, suppose that we collect data $\{(Z_{i},\bm{X}_{i},Y_{i}(t),t \in \mathcal{T})\}_{i=1}^{n}$ that is i.i.d. (independent and identically distributed) drawn from $(Z,\bm{X},Y(t))$.

In some situations, many non-functional predictors are often collected for practical data analysis, and this increasing-dimensional setting has been considered in \cite{aneiros2015variable,kong2016partially}. Moreover, our work can also be applied to deal with divergent number of parameters. Theoretically, this setting requires assuming that the number of scalar covariates grows with the sample size, i.e., $p=p_n \to \infty$, and the convergence rate of the desired estimator becomes totally different from the case where the dimension of the non-functional predictors is fixed.

Dealing with the functional data as a stochastic process is a significant challenge in functional data analysis. Obviously, a functional covariate $Y(t)$  has an infinite number of predictors over the time domain (observed as discrete-time points) that are all highly correlated. The covariance function characterizes the correlation of the functional covariate. The estimation of the slope function in functional regressions is connected to ill-posed inverse problems. To handle the infinite-dimensionality of $\beta_0(t)$, people often impose certain regularity conditions on the hypothesized space of the slope function to ensure that the infinite-dimensional problems are tractable as a finite-dimensional approximation solution. Notwithstanding, the convergence rate of the slope estimators depends directly on the assumptions of the covariance operator's eigenvalue decay and the slope function's restricted space. Thus, the convergence rate cannot be parametric due to the infinite-dimensionality of the model \eqref{2-1}.

Some recent developments in PFLM include \cite{zhu2019estimation,cui2020partially}. Because of the shortcoming of the FPCA-based methods, we apply penalized least squares under the framework of RKHS here. There are few efforts on the non-asymptotic upper and lower bounds in the existing literature. For FPCA-based method, \cite{brunel2016non} considers the adaptive estimation procedure of functional linear models under a non-asymptotic framework; \cite{wahl2018a} analyses the prediction error of functional principal component regression
(FPCR) and proves a non-asymptotic upper bound for the corresponding squared risk. For the RKHS-based method, many works focus on the asymptotic results, such as \cite{cai2006prediction,cai2012minimax}. Under the framework of RKHS-based kernel ridge regressions, \cite{liu2020non} recently studies the non-asymptotic RKHS-norm error bounds (called oracle inequalities) for the estimated function $f_{0}$ in Gaussian non-parametric regression $Y=f_{0}\left(X\right)+\varepsilon, ~\varepsilon \sim N(0, \sigma^{2})$ where $f_0$ belongs to $L_2$. By applying the Matern kernel and supposing $f_{0}$ in a Holder space with the polynomial decay rate of eigenvalues $\lambda_{n}=O(n^{-2 a})$, \cite{liu2020non} derives the nearly minimax optimal convergence rate $\left(\frac{\log n}{n}\right)^{\frac{a}{2 a+1}}$ (up to a $\log n$ factor) for $L_2$-norm estimation error of the estimation of derivatives using plug-in kernel ridge regression (KRR) estimator.

To analyze the PFLM, the main innovation of our work is that we provide non-asymptotic upper and lower bounds for the excess prediction risk under the assumption of the effective dimension, which is equivalent to assuming the eigenvalues of the sandwich operator decay (see Remark \ref{rem-3}). \cite{tong2018analysis} establishes the upper bound for the excess prediction risk for the RKHS-based slope estimator of the functional linear models, but they do not consider the PFLM. Their result on the upper bound is a special case of our more general result because FLM is a special case of PFLM ($p=0$). If we let $p=0$ in the Theorem \ref{thm-1} and suppose $0 \cdot \ln{0} = 0$, we obtain the same non-asymptotic upper bound for FLM as the Theorem 3.6 in \cite{tong2018analysis} up to a constant. We also derive a minimax lower bound for the excess prediction risk under a general assumption concerning the Kullback-Leibler divergence of the model. In \cite{cai2012minimax}, the minimax lower bound is derived for the FLM in an asymptotic sense in Theorem 1, which is also a corollary of our result on the non-asymptotic minimax lower bound when $n \to \infty$. See the last paragraph in Section \ref{sec-4} for detailed derivation. Moreover, the optimal convergence rate (both upper and lower) of the excess prediction risk of PFLM is the same as that of FLM, which means the convergence of the functional part dominates the convergence of the PFLM.

The specific theoretical contributions of our work are listed as
\begin{itemize}
\item A significant contribution is that we obtain the non-asymptotic upper and lower bounds, which have not been well studied in the existing literature. We provide an exact non-asymptotic minimax lower bound on the excess prediction risk in PFLM. Moreover, a particular application of the proposed non-asymptotic version of the optimal prediction upper bound is that it allows analyzing the PFLM with a divergent number of non-functional predictors, which leads to the prediction consistency under the setting $p^{7}\log^{6}(p)=o(n)$.

\item We derive the non-asymptotic upper bound of the excess prediction risk for the RKHS-based least squares estimation in PFLM, and the optimal bound we obtain is more exact than that of \cite{cui2020partially} which only obtains the stochastic order of the convergence rate without the definite multiplying constants relevant to the high probability events. Our derivation for the optimal bound does not need the \emph{inverse Cauchy-Schwarz inequality} $$\operatorname{E}\left(\int Y(t) f(t) d t\right)^{4} \leq C\left[\operatorname{E}\left(\int Y(t) f(t) d t\right)^{2}\right]^{2},~\text{for}~f\in L^{2}(\mathcal{T})$$
as a moment assumption of the functional predictor. This condition is imposed in \cite{cui2020partially,cai2012minimax} to attain minimax prediction bounds for (partially) functional linear regressions. Our proof does not rely on the well-known representation lemma for the smoothing splines; see \cite{wahba1990spline,cucker2001on}.

\item The proof for the Theorem \ref{thm-1} is divided into three steps, and it relies on new non-trivial results. First, we prove the difference of the functional part between the true parameter and our least squares estimate is bounded. Second, based on the boundedness, we show the excess prediction risk contributed by the multivariate part of the predictor is convergent at $n^{-1}$-rate. Finally, according to the convergence of the multivariate part, we obtain the convergence of the prediction risk corresponding to the functional part in $n^{-\frac{1}{1+\theta}}$-rate, where $\theta$ is related to the effective dimension in the Assumption \ref{ass-4}. Specifically, the novelty of the proof lies in the Lemma \ref{3-2}, which is a crucial lemma for the Theorem \ref{thm-1}. In the Lemma \ref{3-2}, to show the concentration property of the random elements in Banach space, we use the methods in functional analysis and convert the random elements in Banach space to other relevant random elements in Hilbert space.

\end{itemize}

\par The outline of this paper is constructed as follows. In Section \ref{sec-2}, we provide the notations and definitions we need and a brief introduction to the RKHS and the PFLM. Section \ref{sec-3} shows our main theorem about the non-asymptotic upper bound for the excess prediction risk and two relevant corollaries. In Section \ref{sec-4}, we state the minimax lower bound for the excess prediction risk. In Section \ref{sec-5}, we provide the proof of the Theorem \ref{thm-1} in Section \ref{sec-3}. In Section \ref{sec-6}, we show the proof the Theorem \ref{thm-2Euclidean} in Section \ref{sec-4}. In Section \ref{sec-7} and \ref{sec-8}, we prove the lemmas we need for the proofs in Section \ref{sec-5} and \ref{sec-6}. In Section \ref{sec-9}, we summarize our conclusions and point out some future directions for research.

\section{Preliminaries}\label{sec-2}
\subsection{Notations and Definitions}
\hspace{4mm} Define $\|\bm{v}\|_2:=(\sum\limits_{i=1}^{p}v_{i}^{2})^{\frac{1}{2}}$ to be the $\ell_{2}$-norm of vector $\bm{v}\in\mathbb{R}^{p}$. Let $\mathcal{T}\subset\mathbb{R}$ be a compact set. Denote by $L^{2}(\mathcal{T})$ the Hilbert space composed by square integrable functions on $\mathcal{T}$, whose inner product and norm are respectively denoted by $\langle f,g \rangle$ and $\|f\|$ for any $f,g\in L^{2}(\mathcal{T})$.
\par Consider $T$ a bounded linear operator from a Banach space $A$ to a Banach space $B$ respectively endowed with the norms $\|\cdot\|_{A}$ and $\|\cdot\|_{B}$. Define the operator norm of $T$ as
\begin{equation*}
    \|T\|_{\mathrm{op}}:=\sup\limits_{x\in A:\|x\|_{A}=1}\|T(x)\|_{B}.
\end{equation*}
Let $T^{*}$ be the adjoint of $T$ from $B^{*}$ to $A^{*}$ defined by
$T^{*}(f)(x):=f(T(x)),\,\,\text{for any}\,\, f\in B^{*}.$ Notice the adjoint of an operator does not change the operator norm, and thus we have $\|T^{*}\|_{\mathrm{op}}=\|T\|_{\mathrm{op}}$.

\par For a matrix $E=(e_{i j})_{1\le i,j\le p}\in\mathbb{R}^{p\times p}$, when writing $\|E\|_{\mathrm{op}}$, we actually view $E$ as a bounded linear operator from $\mathbb{R}^{p}$ to $\mathbb{R}^{p}$ endowed with $\ell_{2}$-norm defined by $\bm{v}\mapsto E\bm{v}$, which is also called the spectral norm. Let $\|E\|_{\infty}:=\max\limits_{1\le i,j\le p}|e_{i j}|$ be the $\ell_{\infty}$-norm of the matrix $E$ and $\lambda_{\max}(E)$ be the maximal eigenvalue of the matrix $E$. Moreover, we have $\opnorm{E}\le p\|E\|_{\infty}$ from $5.6.\mathrm{P}23$ in Page 365 of \cite{horn2012matrix}.
\par For a real, symmetric, square integrable {on the domain $\mathcal{T}\times\mathcal{T}$}
{such that $ \int_{\mathcal{T}\times\mathcal{T}}R^2(s,t)\mathrm{d}s\mathrm{d}t<\infty$.}
The nonnegative definite function $R:\mathcal{T}\times\mathcal{T}\rightarrow\mathbb{R}$, {is called reproducing kernel in the following}. Let $L_{R}:L^{2}(\mathcal{T})\rightarrow L^{2}(\mathcal{T})$ be an integral operator (also a bounded linear operator) defined by
\begin{equation*}
    L_{R}(f)(t):=\langle R(s,t),f(s)\rangle =\int_{\mathcal{T}}R(s,t)f(s)\mathrm{d}s.
\end{equation*}
According to the {Hilbert-Schmidt theorem}, there exists a set of orthonormalized eigenfunctions $\{\psi^{R}_{k}:k\ge 1\}$ and a sequence of eigenvalues $\theta^{R}_{1}\ge \theta^{R}_{2}\ge \cdots >0$ such that
\begin{equation}\label{ser-rep}
    R(s,t)=\sum\limits_{k=1}^{+\infty}\theta^{R}_{k}\psi^{R}_{k}(s)\psi^{R}_{k}(t), \quad \forall s,t\in\mathcal{T}.
\end{equation}
see Theorem 4.6.5 in \cite{hsing2015theoretical} for the proof of such series representation \eqref{ser-rep}.

Noticing the orthonormality of the eigenfunctions $\{\psi^{R}_{k}\}_{k\ge 1}$, we have
\begin{equation*}
    L_{R}(\psi_{k}^{R})(s)=\innerproduct{R(s,t)}{\psi^{R}_{k}(t)}=\innerproduct{\sum\limits_{i=1}^{+\infty}\theta^{R}_{i}\psi^{R}_{i}(s)\psi^{R}_{i}(t)}{\psi^{R}_{k}(t)}=\sum\limits_{i=1}^{+\infty}\theta^{R}_{i}\psi^{R}_{i}(s)\innerproduct{\psi^{R}_{i}(t)}{\psi^{R}_{k}(t)}=\theta^{R}_{k}\psi^{R}_{k}(s).
\end{equation*}

In what follows, we let $\{(\theta^{R}_{k},\psi^{R}_{k})\}_{k\ge1}$ be the eigenvalue-eigenfunction pairs corresponding to the operator (or the equivalent bivariate function) $R$. Let $L_{R}^{\frac{1}{2}}$ be the operator satisfying $L_{R}^{\frac{1}{2}}(\psi^{R}_{k})=\sqrt{\theta^{R}_{k}}\psi^{R}_{k}.$ For two bivariate functions $R_{1},R_{2}:\mathcal{T}\times\mathcal{T}\rightarrow\mathbb{R}$, define
\begin{equation*}
    (R_{1}R_{2})(s,t):=\langle R_{1}(s,\cdot),R_{2}(\cdot,t) \rangle=
    \int_{\mathcal{T}}R_{1}(s,u)R_{2}(u,t)\mathrm{d}u.
\end{equation*}
Then we have the relation  and $L_{R_{1}R_{2}}=L_{R_{1}}\circ L_{R_{2}}$, where $\circ$ means the composition of mappings. To show $L_{R_{1}R_{2}}=L_{R_{1}}\circ L_{R_{2}}$, we notice
\begin{align*}
    L_{R_{1}}\circ L_{R_{2}}(f)(t)
    &=\int_{\mathcal{T}}R_{1}(t,s)L_{R_{2}}(f)(s)\mathrm{d}s =\int_{\mathcal{T}}R_{1}(t,s)\left(\int_{\mathcal{T}}R_{2}(s,u)f(u)\mathrm{d}u\right)\mathrm{d}s \\
    &=\int_{\mathcal{T}}\left(\int_{\mathcal{T}}R_{1}(t,s)R_{2}(s,u)\mathrm{d}s\right)f(u)\mathrm{d}u=L_{R_{1}R_{2}}(f)(t).
\end{align*}
\par Let $\mathrm{HS}(\mathcal{T})$ be the Hilbert space of the Hilbert-Schmidt operators on $L^{2}(\mathcal{T})$ with the inner product $\langle A,B\rangle_{H}:=\operatorname{Tr}(B^{*}A)$ and the norm $\|A\|_{\operatorname{HS}}^{2}=\sum\limits_{k=1}^{+\infty}\|A(\phi_{k})\|^{2}$ where $\{\phi_{k}\}_{k\ge1}$ is an orthonormal basis of $L^{2}(\mathcal{T})$. The space $\mathrm{HS}(\mathcal{T})$ is a subspace of the bounded linear operators on $L^{2}(\mathcal{T})$, with the norm relations $\|A\|_{\mathrm{op}}\le \|A\|_{\operatorname{HS}}$ and
$\|AB\|_{\operatorname{HS}}\le \|A\|_{\mathrm{op}}\|B\|_{\operatorname{HS}}$.

Given a reproducing kernel $K$, we can uniquely identify a RKHS $\mathcal{H}(K)$ composed by a subspace of $L^{2}(\mathcal{T})$ satisfying $K(t,\cdot)\in\mathcal{H}(K)$ for any $t\in\mathcal{T}$, which is endowed with an inner product $\langle\cdot ,\cdot \rangle_{K}$ such that
\begin{equation*}
    f(t)=\langle K(t,\cdot), f \rangle_{K} , \quad\text{for any}\quad f\in\mathcal{H}(K).
\end{equation*}
There is a well-known fact
$$L^{\frac{1}{2}}_K(L^{2}(\mathcal{T}))=\mathcal{H}(K),$$
i.e. the RKHS $\mathcal{H}(K)$ can be characterized as the range of $\K$ equipped with the norm $\|L_{K}^{1 / 2}(f)\|_{K}=\|f\|_{L^{2}(\mathcal{T})}$, see Corollary 1 in \cite{sun2005mercer} for details and extensions. For simplicity, let $\mathcal{H}(K)$ be dense in $L^{2}(\mathcal{T})$, which means $L^{\frac{1}{2}}_K$ is injective. The definition of $\K$ directly yields the compactness of $\K$, by noting that $\mathcal{T}$ is bounded and compact, the reproducing kernel $K$ is continuous, we have
\begin{center}
 $\kappa:=\opnorm{\K}<\infty$.
\end{center}
 Readers can refer to \cite{wahba1990spline,cucker2001on,hsing2015theoretical} for more discussions on RKHS.

For the i.i.d.  data $\{(Z_{i},\bm{X}_{i},Y_{i}(t),t \in \mathcal{T})\}_{i=1}^{n}$, define the empirical covariance matrix $D_{n}$ and the covariance matrix $D$ for the multivariate part of the predictor to be
\begin{equation*}
    D_{n}:=\frac{1}{n}\sum\limits_{i=1}^{n}\bm{X}_{i}\bm{X}_{i}^{T}
    \quad\text{and}\quad
    D:=\operatorname{E}(\bm{X}\bm{X}^{T}),
\end{equation*}
where we assume the expectations of random variables $\bm{X}\bm{X}^T$ exist. Let $\lambda_{\max}:=\lambda_{\max}(D)$ and $\lambda_{\min}:=\lambda_{\min}(D)$ be the maximal and minimal eigenvalue of the covariance matrix $D$. Similarly, when the expectation of $Y(s)Y(t)$ exists, define the empirical covariance function $C_{n}(s,t)$ and the covariance function $C(s,t)$ for the functional part of the predictor to be
\begin{equation*}
    C_{n}(s,t):=\frac{1}{n}\sum\limits_{i=1}^{n}Y_{i}(s)Y_{i}(t)
    \quad\text{and}\quad
    C(s,t):=\operatorname{E}(Y(s)Y(t)).
\end{equation*}
Given the asymmetric, square-integrable, and
non-negative definite covariance function $C(s,t)$, define the \textit{sandwich operator} of the covariance operator $C$ and the reproducing kernel $K$ by
\begin{center}
 $T=\K\circ L_{C}\circ\K$ and its empirical version $T_{n}=\K\circ L_{C_{n}}\circ\K$,
\end{center}
see \cite{cai2012minimax} for details. For simplicity of the following discussion, define
\begin{center}
$g_{n}:=\frac{1}{n}\sum_{i=1}^{n}\varepsilon_{i}\K Y_{i}$ and $a_{n}:=\frac{1}{n}\sum_{i=1}^{n}\varepsilon_{i}\bm{X}_{i}$,
\end{center}
 which are key quantities to derive the convergence rate of the desired estimator. Define the bounded linear operators $G_{n}:L^{2}(\mathcal{T})\rightarrow\mathbb{R}^{p}$ and $H_{n}:\mathbb{R}^{p}\rightarrow L^{2}(\mathcal{T})$ by
\begin{equation*}
G_{n}(f):=\frac{1}{n}\sum\limits_{i=1}^{n}\langle Y_{i},\K f \rangle\bm{X}_{i}, \quad\forall f\in L^{2}(\mathcal{T})
\quad\text{and}\quad
H_{n}(\bm{\alpha}):=\frac{1}{n}\sum\limits_{i=1}^{n}(\bm{X}_{i}^{T}\bm{\alpha}) \K Y_{i}, \quad \forall ~\bm{\alpha}\in \mathbb{R}^{p}.
\end{equation*}

\par Because of the compactness of operator $T$ and the Hilbert-Schmidt theorem on compact operator, see Theorem 11.3 in \cite{schechter2001principles}, there exist a set of eigenvalue-eigenfunction pairs $\{(\tau_{k},\varphi_{k}):k\ge 1\}$ such that the operation of $T$ can be decomposed in the following way
\begin{equation*}
    T(f) = \sum\limits_{k=1}^{\infty}\tau_k \langle f, \varphi_{k}\rangle\varphi_{k},
\end{equation*}
where $\{\varphi_k:k\ge 1\}$ is orthonormal basis and $\{\tau_k:k\ge 1\}$ decrease to $0$. Define the trace of the operator $(T+\lambda I)^{-1}T$ as
\begin{equation*}
    D(\lambda):=\operatorname{Tr}((T+\lambda I)^{-1}T),
\end{equation*}
{which is also called the effective dimension introduced to measure the convergence rate of the functional part; see \cite{zhang2005learning,caponnetto2007optimal}.}

\subsection{The Penalized Least Square for PFLM}
\hspace{4mm}
\par The goal of prediction given the predictor $\bm{X}$ and $Y(t)$ is to recover the prediction $\eta_{0}$, the right side of \eqref{2-1} without the random noise $\varepsilon$,
$\eta_{0}(\bm{X},Y(t)):=\bm{X}^{T}\bm{\alpha}_{0}+\int_{\mathcal{T}}Y(t)\beta_{0}(t)\mathrm{d}t.$ To estimate the true parameter $(\bm{\alpha}_{0},\beta_{0})$, the penalized least square is defined as
\begin{equation}\label{2-3}
    (\hat{\bm{\alpha}}_{n},\hat{\beta}_{n}):=\mathop{\arg\min}_{(\bm{\alpha},\beta)\in \mathbb{R}^{p}\times\mathcal{H}(K)}\frac{1}{n}\sum\limits_{i=1}^{n}\left( Z_{i}-\bm{X}^{T}_{i}\bm{\alpha}-\int_{\mathcal{T}}Y_{i}(t)\beta(t)\mathrm{d}t \right)^{2}+\lambda_{n}\|\beta\|^{2}_{K}.
\end{equation}
Noticing $L^{\frac{1}{2}}_{K}(L^{2}(\mathcal{T}))=\mathcal{H}(K)$, there exists $\hat{f_{n}}\in L^{2}(\mathcal{T})$ such that $L^{\frac{1}{2}}_{K}(\hat{f_{n}})=\hat{\beta_{n}}$. So the \eqref{2-3} is replaced by
\begin{equation}\label{2-4}
    (\hat{\bm{\alpha}}_{n},\hat{f}_{n}):=\mathop{\arg\min}_{(\bm{\alpha},f)\in \mathbb{R}^{p}\times L^{2}(\mathcal{T})}\frac{1}{n}\sum\limits_{i=1}^{n}\left( Z_{i}-\bm{X}^{T}_{i}\bm{\alpha}-\langle Y_{i},L^{\frac{1}{2}}_{K}f \rangle \right)^{2}+\lambda_{n}\|f\|^{2}.
\end{equation}
{For the Euclidean predictor vector, we assume the dimension of the multivariate parameter $p$ less than or equal to the number of the training samples $n$ ($p\le n$), by which the empirical covariance matrix $D_n$ is invertible almost surely according to the Theorem in \cite{okamoto1973distinctness}, if the distribution of $\{\bm{X}_{i}\}_{i=1}^{n}$ is absolutely continuous (with respect to Lebesgue measure).}
\begin{ass}\label{ass-0}
We assume $D_n$ and $D$ are positive definite for the given data with $p\le n$.
\end{ass}
{According to the definition of the penalized least squares $(\hatalpha{n},\hat{f}_{n})$, which minimize \eqref{2-4}, the difference between the penalized least squares $(\hatalpha{n},\hat{f}_{n})$ and the true parameter $(\bm{\alpha}_{0},f_{0})$ can be represented as
\begin{numcases}{}
\hatfn-f_{0}=-\lambda_{n}(T_{n}+\lambda_{n}I)^{-1}f_{0}-(T_{n}+\lambda_{n}I)^{-1}H_{n}(\hatalpha{n}-\bm{\alpha}_{0})+(T_{n}+\lambda_{n}I)^{-1}g_{n} ,  \label{2-5} \\
\hatalpha{n}-\bm{\alpha}_{0}=-D_{n}^{-1}G_{n}(\hatfn-f_{0})+D_{n}^{-1}a_{n} , \label{2-6}
\end{numcases}
where $T_{n}$, $H_{n}$, $G_{n}$, $D_{n}$, $g_{n}$ and $a_{n}$ are given in the previous section. The derivation of \eqref{2-5} and \eqref{2-6} is left to Section \ref{App-A}, where we use the method of the calculus of variations. Thus the existence of the minimizer of the penalized least squares $\eqref{2-4}$ becomes to find a tuple $(\hat{\bm{\alpha}}_n, \hat{f}_n)$ satisfying the equations above.}

Let $ \hat{\eta}_{n}(\bm{X},Y(t)):=\bm{X}^{T}\hatalpha{n}+\int_{\mathcal{T}}Y(t)\hatbeta{n}(t)\mathrm{d}t$ be the prediction rule induced by the penalized least square estimator $(\hatalpha{n},\hatbeta{n})$. For a prediction rule $\eta(\bm{X},Y(t))$, define the prediction risk to be
\begin{equation*}
    \mathcal{E}(\eta):=\operatorname{E}[Z^{*}-\eta(\bm{X}^{*},Y^{*}(t))]^{2},
\end{equation*}
where $(Z^{*},\bm{X}^{*},Y^{*}(t))$ is an independent copy of $(Z,\bm{X},Y(t))$. We measure the accuracy of the prediction $\hat{\eta}_{n}$ by the excess prediction risk
\begin{equation*}
    \mathcal{E}(\hat{\eta}_{n})-\mathcal{E}(\eta_{0})=\operatorname{E}[\hat{\eta}_{n}(\bm{X}^{*},Y^{*}(t))-\eta_{0}(\bm{X}^{*},Y^{*}(t))]^{2}.
\end{equation*}
Let $f_{0}\in L^{2}(\mathcal{T})$ satisfying $\K f_{0}=\beta_{0}$ and rewrite $\eta_{0}(\bm{X},Y(t))$ and $\hat{\eta}_{n}(\bm{X},Y(t))$ to
$$
\eta_{0}(\bm{X},Y(t))=\bm{X}^{T}\bm{\alpha}_{0}+\int_{\mathcal{T}}Y(t)(\K f_{0})(t)\mathrm{d}t~~\text{and}~~\hat{\eta}_{n}(\bm{X},Y(t))=\bm{X}^{T}\hatalpha{n}+\int_{\mathcal{T}}Y(t)(\K \hat{f}_{n})(t)\mathrm{d}t,
$$
by which we can bound the excess prediction risk
\begin{align*}
&~~~~\mathcal{E}(\hat{\eta}_{n})-\mathcal{E}(\eta_{0})
=\operatorname{E}\left[\bm{X}^{*T}(\bm{\alpha}_{0}-\hatalpha{n})+\int_{\mathcal{T}}Y^{*}(t)(\K (f_{0}-\hat{f}_{n}))(t)\mathrm{d}t\right]^{2}\\
&\le 2\operatorname{E}[\bm{X}^{*T}(\bm{\alpha}_{0}-\hatalpha{n})]^{2}+2\operatorname{E}\left[\int_{\mathcal{T}}Y^{*}(t)(\K (f_{0}-\hat{f}_{n}))(t)\mathrm{d}t\right]^{2}\\
&=2(\bm{\alpha}_{0}-\hatalpha{n})^{T}\operatorname{E}(\bm{X}^{*}\bm{X}^{*T})(\bm{\alpha}_{0}-\hatalpha{n})+2\iint_{\mathcal{T}\times\mathcal{T}}\operatorname{E}[Y^{*}(t)Y^{*}(s)](\K (f_{0}-\hat{f}_{n}))(t)(\K (f_{0}-\hat{f}_{n}))(s)\mathrm{d}t
\mathrm{d}s.\end{align*}

Notice the equality
\begin{equation}\label{2-7}
\operatorname{E}\left[\int_{\mathcal{T}}Y(t)f(t)\mathrm{d}t\right]^{2}
=\iint_{\mathcal{T}\times\mathcal{T}}\operatorname{E}[Y(s)Y(t)]f(s)f(t)\mathrm{d}s\mathrm{d}t =\int_{\mathcal{T}}f(t)\left( \int_{\mathcal{T}}C(s,t)f(s)\mathrm{d}s \right)\mathrm{d}t
=\langle f,L_{C}f   \rangle.
\end{equation}

With the definitions above, we reformulate the upper bound for the excess prediction risk to
\begin{align}\label{excess}
\mathcal{E}(\hat{\eta}_{n})-\mathcal{E}(\eta_{0})
&\le 2\lambda_{\max}\|\hatalpha{n}-\bm{\alpha}_{0}\|_2^{2}+
2\langle \K(\hat{f}_{n}-f_{0}),L_{C}\K(\hat{f}_{n}-f_{0}) \rangle  \nonumber\\
&= 2\lambda_{\max}\|\hatalpha{n}-\bm{\alpha}_{0}\|_2^{2}+2\|T^{\frac{1}{2}}(\hat{f}_{n}-f_{0})\|^{2},
\end{align}
which is relatively easy to analyze.

\section{The Analysis and Main Results}\label{sec-3}
\subsection{The Analysis}
\hspace{4mm} Based on the RKHS framework, more regularity assumptions are needed to ensure our main results.
\begin{ass}\label{ass-1}
Let ${\left\| X \right\|_{G}} :=\sup_{k \ge 1} {[ {\frac{{\rm{E}}{X^{2k}}}{(2 k-1) ! !}} ]^{1/(2k)}}$ be the sub-Gaussian norm. We assume $\{X_{j}\}_{j=1}^p$ satisfies the sub-Gaussian growth of moments condition, i.e.
\begin{equation}\label{ass-1-2}
  \max_{1\le j\le p}{\left\|X_{j} \right\|_{G}}\le M_1<\infty
\end{equation}
and $\max_{1\le j\le p}{\E |X_{j}|^2}\le v^2<\infty$.
\end{ass}
\begin{ass}\label{ass-2}
$Y(t)$ is a bounded square integrable stochastic process: there exists $M_{2}>0$ such that $\| Y(\cdot) \|_{L^2(\mathcal{T})} \le M_2$ (a.s.).
\end{ass}
\begin{ass}\label{ass-3}
{The random noise $\varepsilon$ has conditional zero mean and finite variance: $\operatorname{E}(\varepsilon|\bm{X},Y)=0$ and $\operatorname{E}(\varepsilon^{2}|\bm{X},Y)<\sigma^{2}$ given $\bm{X}$ and $Y$.}
\end{ass}
\begin{ass}\label{ass-4}
The effective dimension of $T$ satisfies $D(\lambda):=\operatorname{Tr}((T+\lambda I)^{-1}T)\le c\lambda^{-\theta}$ for constants $c>0$ {and $0<\theta \le 1$}.
\end{ass}

{The sub-Gaussian data Assumption \ref{ass-1} has been adopted in many high-dimensional statistics references, see \cite{zhang2020concentration} for a review. } We assume that $Y(\cdot)$ is bounded in $L^2$ norm almost surely in the Assumption \ref{ass-2}. {The Assumption \ref{ass-3} follows the general assumptions of conditional zero mean and finite variance on random noise given the observations $\bm{X}$ and $Y(t)$.} The Assumption \ref{ass-4} on the effective dimension has been adopted in \cite{tong2018analysis}, which reflects the convergence of eigenvalues of $L_{C}$ and $L_{K}$ and how their eigenfunctions align. {The Assumption \ref{ass-4} is also equal to the common assumption on the decay rate of the eigenvalues of $T$ (see Remark \ref{rem-3}).}

\begin{rem}
For the simplicity of later proof and statement of lemmas, we further assume $\kappa$, $M_{2}>1$. Because of the boundedness, these assumptions make no essential difference compared with the original assumptions.

Define a random variable $\xi$ taking values in $\mathrm{HS}(\mathcal{T})$ by
\begin{equation*}
    \xi(f):=(T+\lambda_{n}I)^{-\frac{1}{2}}\langle \K Y,f \rangle\K Y.
\end{equation*}
Actually, we can weaken the boundedness in Assumption \ref{ass-2} by assuming the Bernstein's growth of moments condition of $\xi$ in $\mathrm{\mathrm{HS}}(\mathcal{T})$: there exist $\tilde M, \tilde\nu>0$ such that
\begin{equation}\label{cond-1}
    \operatorname{E}(\|\xi-\operatorname{E}\xi\|^{l}_{\operatorname{HS}})\le \frac{D(\lambda_{n})\tilde M^{2}}{2}\tilde\nu^{l-2}l!\, ,\quad \text{for all integer}\,\, l\ge 2.
\end{equation}
Then using the Lemma \ref{3-8}, we obtain a similar result as in the Lemma \ref{3-1}, by which we have an analogous result for the non-asymptotic optimal prediction error as in the Theorem \ref{thm-1}. But the condition \eqref{cond-1} is challenging to verify, and a similar situation is also provided for the FPCA method in H2 of \cite{brunel2016non}. Here, we do not offer the complete proof under the condition \eqref{cond-1}.
\end{rem}

\par Before getting to our main results, we need two important lemmas, of which the proofs are left to Section \ref{sec-7} and \ref{sec-8}. The following Lemma \ref{3-1} and Lemma \ref{3-2} can be viewed as the concentration inequalities for the operator-valued random variables $T_{n}$, $G_{n}$ and $H_{n}$. The concentration inequalities for the random variable taking values in Hilbert space, as stated in the Lemma \ref{3-3} and \ref{3-8} play an important role in the proofs of lemmas.
\begin{lemma}\label{3-1}
Under the Assumption \ref{ass-2}, for any $\delta_{1}\in(0,2e^{-1})$, with probability at least $1-\delta_{1}$, we have
\begin{center}
$\|(T+\lambda_{n}I)^{-\frac{1}{2}}(T_{n}-T)\|_{\operatorname{op}}
    \le c_{1}\log\left(\frac{2}{\delta_{1}}\right)B_{n},$ where
$c_{1}:=2\kappa^{2}M^{2}_{2}$ and $B_{n}:=\frac{1}{n\sqrt{\lambda_{n}}}+\sqrt{\frac{D(\lambda_{n})}{n}}$.
\end{center}
\end{lemma}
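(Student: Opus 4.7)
I would start by writing $T_n = \frac{1}{n}\sum_{i=1}^n \xi_i$, where $\xi_i(f) := \langle \K Y_i, f\rangle\, \K Y_i$ is the rank-one Hilbert--Schmidt operator on $L^{2}(\mathcal{T})$ attached to the $i$-th transformed functional covariate. A direct check using \eqref{2-7} and the factorisation $T=\K\circ L_C\circ\K$ gives $\mathrm{E}\xi_i=T$, so if I set
\begin{equation*}
    \eta_i := (T+\lambda_n I)^{-\frac{1}{2}}\xi_i \in \mathrm{HS}(\mathcal{T}),
\end{equation*}
then $\mathrm{E}\eta_i = (T+\lambda_n I)^{-\frac{1}{2}}T$ and
\begin{equation*}
    (T+\lambda_n I)^{-\frac{1}{2}}(T_n - T) \;=\; \frac{1}{n}\sum_{i=1}^{n}\bigl(\eta_i - \mathrm{E}\eta_i\bigr).
\end{equation*}
Since $\opnorm{A}\le \|A\|_{\operatorname{HS}}$, it suffices to bound the $\operatorname{HS}$-norm of the right-hand side, and I would invoke the Hilbert-space Bernstein inequality (Lemma~\ref{3-3}) applied to the i.i.d.\ centred random elements $\eta_i - \mathrm{E}\eta_i$ of $\mathrm{HS}(\mathcal{T})$. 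This reduces the proof to furnishing an almost-sure bound $B$ and a variance bound $V$.

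The almost-sure bound uses the rank-one structure. The operator $\eta_i$ acts by $f\mapsto\langle\K Y_i,f\rangle\,(T+\lambda_n I)^{-\frac{1}{2}}\K Y_i$, so its Hilbert--Schmidt norm squared equals
\begin{equation*}
    \|\eta_i\|_{\operatorname{HS}}^{2} \;=\; \|(T+\lambda_n I)^{-\frac{1}{2}}\K Y_i\|^{2}\cdot\|\K Y_i\|^{2} \;\le\; \frac{1}{\lambda_n}\cdot\kappa^{2}M_2^{2}\cdot\kappa^{2}M_2^{2} \;=\; \frac{\kappa^{4}M_2^{4}}{\lambda_n},
\end{equation*}
where I used $\opnorm{\K}=\kappa$, Assumption~\ref{ass-2}, and $\opnorm{(T+\lambda_n I)^{-1/2}}\le\lambda_n^{-1/2}$. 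Centring at most doubles this, so I take $B = 2\kappa^{2}M_2^{2}/\sqrt{\lambda_n}$.

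The variance bound is the key step and is where the effective dimension enters. Keeping one copy of $\K Y_i$ inside the expectation so as to exploit $\mathrm{E}[\K Y_i\otimes \K Y_i]=T$ (i.e.\ $\mathrm{E}\xi_i=T$), I estimate
\begin{equation*}
    \mathrm{E}\|\eta_i\|_{\operatorname{HS}}^{2} \;\le\; \kappa^{2}M_2^{2}\cdot\mathrm{E}\langle\K Y_i,(T+\lambda_n I)^{-1}\K Y_i\rangle \;=\; \kappa^{2}M_2^{2}\cdot\operatorname{Tr}\bigl((T+\lambda_n I)^{-1}T\bigr) \;=\; \kappa^{2}M_2^{2}\,D(\lambda_n),
\end{equation*}
so I set $V = \kappa^{2}M_2^{2}D(\lambda_n)$. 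Substituting $B$ and $V$ into Lemma~\ref{3-3} yields, with probability at least $1-\delta_1$, a bound of the form $\tfrac{B\log(2/\delta_1)}{n}+\sqrt{\tfrac{V\log(2/\delta_1)}{n}}$ up to universal constants; using $\log(2/\delta_1)\ge 1$ on the range $\delta_1\in(0,2e^{-1})$ to dominate $\sqrt{\log}$ by $\log$, and $\kappa,M_2>1$ to absorb the subdominant constants, one recovers $c_1\log(2/\delta_1)B_n$ with $c_1=2\kappa^{2}M_2^{2}$.

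The main obstacle is the variance computation, not the boundedness. The naive estimate $\mathrm{E}\|\eta_i\|_{\operatorname{HS}}^{2}\le \kappa^{4}M_2^{4}/\lambda_n$ would give a fluctuation of order $\sqrt{1/(n\lambda_n)}$, which is too crude to drive the optimal rate ever obtained later. Exploiting the rank-one structure together with the identity $\mathrm{E}[\K Y_i\otimes\K Y_i]=T$ replaces the crude factor $\lambda_n^{-1}$ by the effective dimension $D(\lambda_n)$, and this is precisely the mechanism by which Assumption~\ref{ass-4} enters the concentration rate.
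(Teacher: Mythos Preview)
Your approach is essentially the paper's own: define the rank-one operators $\eta_i=(T+\lambda_n I)^{-1/2}\langle\K Y_i,\cdot\rangle\K Y_i$, establish an almost-sure $\operatorname{HS}$-bound from $\|\K Y_i\|\le\kappa M_2$ and $\opnorm{(T+\lambda_n I)^{-1/2}}\le\lambda_n^{-1/2}$, obtain the variance bound $\kappa^2M_2^2D(\lambda_n)$ via the identity $\operatorname{E}[\K Y\otimes\K Y]=T$ (the paper does the same computation by expanding in the eigenbasis $\{\varphi_l\}$ of $T$), and then invoke Lemma~\ref{3-3}. The only discrepancy is your unnecessary doubling: Lemma~\ref{3-3} as stated in the paper requires only $\|\xi\|_{\operatorname{H}}\le M$, not $\|\xi-\operatorname{E}\xi\|_{\operatorname{H}}\le M$, so you should feed $M=\kappa^2M_2^2/\sqrt{\lambda_n}$ directly; with your $B=2\kappa^2M_2^2/\sqrt{\lambda_n}$ the first term of the concentration bound becomes $4\kappa^2M_2^2\log(2/\delta_1)/(n\sqrt{\lambda_n})$, which cannot be absorbed into $c_1\log(2/\delta_1)B_n$ with $c_1=2\kappa^2M_2^2$. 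Dropping the doubling recovers the stated constant exactly.
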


\begin{lemma}[$\|G_{n}\|_{\operatorname{op}}=\|H_{n}\|_{\operatorname{op}}$]\label{3-2}
 Under the Assumptions \ref{ass-1} and \ref{ass-2}, for any $\delta_{2}\in(0,1)$, with probability at least $1-\delta_{2}$, we have
\begin{center}
$\|G_{n}\|_{\operatorname{op}}=\|H_{n}\|_{\operatorname{op}}\le \frac{\kappa M_2[v+8M_1{\log^{1/2} ({p}/{\delta_{2}})}]}{\sqrt n}$.
\end{center}
\end{lemma}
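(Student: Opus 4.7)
The plan is to reduce the bound to controlling $\|H_n\|_{\mathrm{op}}$ alone by first verifying that $H_n$ is the Hilbert-space adjoint of $G_n$. Using self-adjointness of $L_K^{1/2}$ one computes in one line that
\[
\langle G_n(f),\bm{\alpha}\rangle_{\mathbb{R}^p}=\frac{1}{n}\sum_{i=1}^{n}(\bm{X}_i^{T}\bm{\alpha})\langle L_K^{1/2}Y_i,f\rangle=\langle f,H_n(\bm{\alpha})\rangle_{L^{2}(\mathcal{T})},
\]
so $H_n=G_n^{*}$ and $\|G_n\|_{\mathrm{op}}=\|H_n\|_{\mathrm{op}}$ from the adjoint identity recalled in the preliminaries.

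Next I would decompose $H_n$ along the coordinates of $\mathbb{R}^{p}$. Set $H_n^{(j)}:=\frac{1}{n}\sum_{i=1}^{n}X_{ij}\,L_K^{1/2}Y_i\in L^{2}(\mathcal{T})$, so that $H_n(\bm{\alpha})=\sum_{j=1}^{p}\alpha_j H_n^{(j)}$. Since $\|\bm{\alpha}\|\le 1$ forces $|\alpha_j|\le 1$ for every $j$, the triangle inequality yields the \emph{Banach-to-Hilbert reduction} $\|H_n\|_{\mathrm{op}}\le p\,\max_{1\le j\le p}\|H_n^{(j)}\|$. This is the ``conversion of random elements in a Banach space to random elements in a Hilbert space'' flagged in the introduction: the operator-valued (hence Banach-valued) random variable $H_n$ is controlled by $p$ empirical averages of i.i.d.\ random elements of the Hilbert space $L^{2}(\mathcal{T})$, which is exactly the setting of Lemma \ref{3-3}/\ref{3-8}.

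For each fixed $j$, the summands $\eta_i^{(j)}:=X_{ij}\,L_K^{1/2}Y_i$ are i.i.d.\ and mean-zero in $L^{2}(\mathcal{T})$ because $\bm{X}$ and $Y(\cdot)$ are independent with $\operatorname{E}\bm{X}=\bm{0}$. Assumption \ref{ass-1} controls the moments of $X_{ij}$ in Bernstein form, and Assumption \ref{ass-2} together with $\|L_K^{1/2}\|_{\mathrm{op}}=\kappa$ yields the deterministic bound $\|L_K^{1/2}Y_i\|\le \kappa M_2$. Multiplying these and factorising by independence gives $\operatorname{E}\|\eta_i^{(j)}\|^{l}\le \tfrac{(M_1\kappa M_2)^{2}}{2}(\upsilon\kappa M_2)^{l-2}l!$ for all $l\ge 2$, which is precisely the Bernstein template with parameters $M=M_1\kappa M_2$ and $\nu=\upsilon\kappa M_2$. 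The Hilbert-space Bernstein inequality therefore produces, for each $j$, $\|H_n^{(j)}\|\le C\kappa(\upsilon+M_1)M_2\,\log(2/\delta)/\sqrt{n}$ with probability at least $1-\delta$; a union bound over $j=1,\ldots,p$ with $\delta=\delta_2/p$, combined with the reduction in the previous paragraph, yields the desired $\log(2p/\delta_2)$ factor and the constant $c_2=2p\kappa(\upsilon+M_1)M_2$.

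The main obstacle I anticipate is the moment verification: one must factor $\operatorname{E}\|\eta_i^{(j)}\|^{l}=\operatorname{E}|X_{ij}|^{l}\cdot \operatorname{E}\|L_K^{1/2}Y_i\|^{l}$ and repackage the product into the exact Bernstein shape $\frac{M^{2}}{2}\nu^{l-2}l!$ so that the resulting $M$ and $\nu$ match the hypotheses of the Hilbert-space concentration lemma, and then propagate the numerical constants faithfully through the Bernstein tail inversion and the union bound. Everything else is bookkeeping: once the adjoint identity is in place, no idea beyond the coordinate decomposition, the triangle inequality, and the Hilbert-valued Bernstein inequality is needed.
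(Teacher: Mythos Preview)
Your proposal is correct and essentially identical to the paper's proof: the paper also establishes $H_n=G_n^{*}$, decomposes coordinatewise (it works with $G_{n,j}(f)=\langle H_n^{(j)},f\rangle$, which by Riesz is the same object as your $H_n^{(j)}$), verifies the Bernstein moment template $\operatorname{E}\|\xi_j\|_{\mathrm{op}}^{l}\le \tfrac{(\kappa M_1 M_2)^2}{2}(\kappa M_2\upsilon)^{l-2}l!$ using the deterministic bound $\|L_K^{1/2}Y\|\le\kappa M_2$, applies Lemma~\ref{3-8} at level $\delta_2/p$, and finishes with a union bound and the coordinate sum $\|G_n\|_{\mathrm{op}}\le\sum_{j}\|G_{n,j}\|_{\mathrm{op}}$. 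The only cosmetic difference is that you bound $\|H_n\|_{\mathrm{op}}\le p\max_j\|H_n^{(j)}\|$ while the paper sums, but after the union bound both produce the same factor $p$ and the same constant $c_2$.
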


\subsection{Main Results}
\hspace{4mm}With all the preparations above, we can state this paper's main result. The following theorem provides a non-asymptotic upper bound for the excess prediction risk.
\begin{theorem}\label{thm-1}
Under the Assumptions \ref{ass-0}-\ref{ass-4}, for any $\delta_1,\delta_3,\delta_4,\delta_5\in(0,1)$, $\delta_{2}\in(0,2e^{-1})$, let $\lambda_{n}:=\omega n^{-\frac{1}{1+\theta}},(\omega >0)$,
{$N_{1}:=24p\opnorm{D^{-1}}(48p\opnorm{D^{-1}}M_1^4+M_1^2)\log\left(\frac{2p^{2}}{\delta_{5}}\right)$} and
\begin{center}
$N_{2}=\left[ \frac{12p^{2}\kappa^{2}(\upsilon+M_{1})^{2}M_{2}^{2}\opnorm{D^{-1}}}{\omega}\log^{3}\left(\frac{2p}{\delta_{2}}\right)\log\left(\frac{2p}{\delta_{5}}\right) \right]^{\frac{1+\theta}{\theta}}, $
\end{center}
such that for $n\ge n_{0}:=\lceil \max\{N_{1},N_{2}\} \rceil$, we have with probability at least $1-\sum_{i=1}^{5}\delta_{i}$
\begin{equation}\label{pre}
    \mathcal{E}(\hat{\eta}_{n})-\mathcal{E}(\eta_{0})\le
    \left(2\lambda_{\max}(2c_{4}c_{6}+c_{5})^{2}+2c_{9}^{2}\right)n^{-1}+\left(4(c_{7}+c_{8})c_{9}\sqrt{\omega}\right)n^{-\frac{2+\theta}{2+2\theta}}+\left(2(c_{7}+c_{8})^{2}\omega\right)n^{-\frac{1}{1+\theta}},
\end{equation}
where $\{c_{i}\}_{i=4}^9$ are specific constants given in the proof that depend on the true parameters and the assumptions, and can be written as {$c_{4}:={\kappa M_2[v+8M_1{\log^{1/2} ({p}/{\delta_{2}})}]}\frac{3\opnorm{D^{-1}}}{2}$, $c_{5}=\frac{3\sqrt{p}\sigma v\opnorm{D^{-1}}}{2\sqrt{\delta_{4}}},$}
\begin{center}
 $c_{6}:=\|f_{0}\|+\frac{2p\kappa(\upsilon+M_{1})M_{2}c_{5}}{\omega}\log\left(\frac{2p}{\delta_{2}}\right)+\frac{\sigma(\omega^{-1}+\omega^{-\frac{1+\theta}{2}}\sqrt{c})}{\sqrt{\delta_{3}}}, $~$c_{7}:=\|f_{0}\|\left[2\kappa^{2}M_{2}^{2}\left(\omega^{-1}+\omega^{-\frac{1+\theta}{2}}\sqrt{c}\right)\log\left(\frac{2}{\delta_{1}}\right)+1\right], $
\end{center}
\begin{center}
$c_{8}:=\left( 2\kappa^{2}M_{2}^{2}\left(\omega^{-1}+\omega^{-\frac{1+\theta}{2}}\sqrt{c}\right)\log\left(\frac{2}{\delta_{1}}\right)  +1\right)^{2}\frac{\sigma(\omega^{-1}+\omega^{-\frac{1+\theta}{2}}\sqrt{c})}{\sqrt{\delta_{3}}}$ and
\end{center}
\begin{center}
{$  c_{9}:=\frac{\kappa M_2(2c_{4}c_{6}+c_{5})}{\sqrt{\omega}}\left( c_{1}(\coeffone)\log\left(\frac{2}{\delta_{1}}\right)  +1\right)\left[v+8M_1{\log^{1/2} \left(\frac{p}{\delta_{2}}\right)}\right].$}
\end{center}
\end{theorem}

Equation \eqref{pre} presents an exact upper bound of the excess perdition risk with all precise constants determined by the regularity conditions. The first term on the right side of \eqref{pre} is ascribed to the parametric part of the PFLM. The second term is a mixed bound consisting of both the parametric and the functional part since the prediction risk is a square function composed of both the functional and non-functional predictors. The last term is a dominated term, which reveals that the signal strength $||f_0||$, the operator norm of the reproducing kernel, and the variation of functional predictor play a crucial role in the non-asymptotic upper bound of prediction risk. Unfortunately, these assumption-dependent constants are always ignored in most references of asymptotic analysis for functional regressions.

\begin{rem}\label{rem-4}
{At first glance, it seems very surprising that the upper bound on the excess risk is independent of $\alpha_0$. From \eqref{2-6},
\begin{equation*}
 \bm{\hat\alpha}_{n}-\bm{\alpha}_{0}=\left(\frac{1}{n}\sum\limits_{i=1}^{n}\bm{X}_{i}\bm{X}_{i}^{T}\right)^{-1}\cdot\left(\frac{1}{n}\sum\limits_{i=1}^{n}\langle Y_{i},\K ( \hat{f}_n - f_{0} ) \rangle\bm{X}_{i}+ \frac{1}{n}\sumn\varepsilon_{i}\bm{X}_{i}\right),
\end{equation*}
and thus $\bm{\hat\alpha}_{n}-\bm{\alpha}_{0}$ has no relation to $\alpha_0$. We illustrate this phenomenon by setting $\beta_0(t)=0$ in our model (1). Then the PFLM degenerates to the classical linear models with a divergence number of Euclidean predictors. We have $\hat{\bm{ \alpha}}_{n}-\bm{\alpha}_{0}=(\frac{1}{n}\sum_{i=1}^{n}\bm{X}_{i}\bm{X}_{i}^{T})^{-1}\cdot(\frac{1}{n}\sum_{i=1}^{n}\varepsilon_{i}\bm{X}_{i}),$ which is free of $\bm{\alpha}_{0}$.}
\end{rem}

From the proof of the Theorem \ref{thm-1}, one can obtain the non-asymptotic upper bounds of functional and the non-functional parameters regressions below.
\begin{corollary}\label{confidence}
Under the conditions in Theorem \ref{thm-1}, for $n > n_{0}$ we have,
\begin{equation}\label{ALFA0}
   P\left( \|\hatalpha{n}-\bm{\alpha}_{0}\|_2\le
    \frac{2c_{4}c_{6}+c_{5}}{\sqrt{n}}\right)\ge 1-\delta_{2}-\delta_{3}-\delta_{4}-\delta_{5},
\end{equation}
\begin{equation}\label{beta0}
  P\left(  \|T^{\frac{1}{2}}(\hatfn-f_{0})\|\le (c_{7}+c_{8})\sqrt{\lambda_{n}}+\frac{c_{9}}{\sqrt{n}}\right)\ge 1-\sum\limits_{i=1}^{5}\delta_{i}.
\end{equation}
\end{corollary}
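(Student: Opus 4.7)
The plan is to extract the two probability bounds as by-products of the three-stage proof of Theorem~\ref{thm-1} alluded to in the introduction. The starting identities are the fixed-point equations \eqref{2-5}--\eqref{2-6}; the operator-valued quantities $T_n, G_n, H_n$ are controlled by Lemmas~\ref{3-1} and \ref{3-2}; and the noise terms $g_n, a_n$ are handled by Markov's inequality applied to their second moments, with the effective-dimension Assumption~\ref{ass-4} contributing the $\omega^{-1}+\omega^{-(1+\theta)/2}\sqrt c$ factor whenever $g_n$ is involved. A matrix Bernstein-type inequality also enters, in order to replace $\|D_n^{-1}\|_{\mathrm{op}}$ by a constant multiple of $\|D^{-1}\|_{\mathrm{op}}$; this is what dictates the threshold $N_1$.

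To obtain \eqref{ALFA0}, I would first establish the crude a-priori bound $\|\hatfn - f_0\|\le c_6$ by applying the triangle inequality to \eqref{2-5} together with the elementary bounds $\|\lambda_n(T_n+\lambda_n I)^{-1}\|_{\mathrm{op}}\le 1$ and $\|(T_n+\lambda_n I)^{-1}\|_{\mathrm{op}}\le 1/\lambda_n$, and then invoking Lemma~\ref{3-2} and the Markov bound on $\|g_n\|$. Substituting this into \eqref{2-6} gives
$$
\|\hatalpha{n}-\bm{\alpha}_0\| \le \|D_n^{-1}\|_{\mathrm{op}}\|G_n\|_{\mathrm{op}}\|\hatfn - f_0\| + \|D_n^{-1}\|_{\mathrm{op}}\|a_n\|,
$$
and combining the matrix perturbation bound for $D_n^{-1}$, Lemma~\ref{3-2} for $\|G_n\|_{\mathrm{op}}$, the a-priori bound for $\|\hatfn - f_0\|$, and a Markov bound on $\|a_n\|$ (using $\mathbb{E}\|a_n\|^2\le p\sigma^2 M_1^2/n$) recovers exactly $(2c_4 c_6+c_5)/\sqrt n$.

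For \eqref{beta0}, I would apply $T^{\frac{1}{2}}$ to \eqref{2-5} and bound each of the three resulting terms. The crucial device is the resolvent factorisation
$$
T^{\frac{1}{2}}(T_n+\lambda_n I)^{-1}
=\bigl[T^{\frac{1}{2}}(T+\lambda_n I)^{-\frac{1}{2}}\bigr]\cdot\bigl[(T+\lambda_n I)^{\frac{1}{2}}(T_n+\lambda_n I)^{-1}\bigr],
$$
in which the first bracket has operator norm at most $1$, while the second is tamed through Lemma~\ref{3-1} after writing $(T+\lambda_n I)^{\frac{1}{2}}(T_n+\lambda_n I)^{-1}=I+(T+\lambda_n I)^{-\frac{1}{2}}(T-T_n)(T_n+\lambda_n I)^{-1}(T+\lambda_n I)^{\frac{1}{2}}$ and iterating. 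The threshold $N_2$ is chosen precisely so that the Lemma~\ref{3-1} deviation $c_1\log(2/\delta_1)B_n$ remains small, keeping this factor under control. The $\lambda_n f_0$-term then produces $c_7\sqrt{\lambda_n}$, the $g_n$-term produces $c_8\sqrt{\lambda_n}$, and the $H_n(\hatalpha{n}-\bm{\alpha}_0)$-term produces $c_9/\sqrt n$ via Lemma~\ref{3-2} combined with \eqref{ALFA0}.

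The main obstacle is this last step: comparing the perturbed resolvent $(T_n+\lambda_n I)^{-1}$ to $(T+\lambda_n I)^{-1}$ after pre-multiplication by $T^{\frac{1}{2}}$ requires care with non-commuting square roots, and the constants carefully tracked in $c_7, c_8, c_9$ reflect the bookkeeping needed to make the resolvent-identity iteration close up. Everything else amounts to triangle-inequality and moment estimates, so once the perturbation factor is bounded by an absolute constant on the event $n\ge n_0$, the two advertised inequalities follow by simply collecting terms.
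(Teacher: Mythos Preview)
Your outline captures the right three-stage architecture, but there are two concrete gaps.

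\textbf{The a-priori bound and the role of $N_2$.} You propose to obtain $\|\hatfn-f_0\|\le c_6$ directly from \eqref{2-5} via the triangle inequality and Lemma~\ref{3-2}. But the $H_n(\hatalpha n-\bm{\alpha}_0)$ term in \eqref{2-5} still contains the unknown $\|\hatalpha n-\bm{\alpha}_0\|$, and the only available bound for that is \eqref{2-6}, which in turn contains $\|\hatfn-f_0\|$. The paper resolves this coupling by substituting \eqref{2-6} into \eqref{2-5}, obtaining a self-referential inequality
\[
\|\hatfn-f_0\|\le \frac{c_2c_4\log(2p/\delta_2)}{n\lambda_n}\,\|\hatfn-f_0\|+c_6,
\]
and then using the threshold $N_2$ to force the coefficient below $\tfrac12$; this yields $\|\hatfn-f_0\|\le 2c_6$, and that factor of $2$ is precisely the one appearing in $2c_4c_6+c_5$. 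Your statement that $N_2$ is chosen to make the Lemma~\ref{3-1} deviation small is incorrect: $N_2$ controls this bootstrapping step, not the operator perturbation.

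\textbf{The resolvent comparison.} The identity you wrote,
\[
(T+\lambda_n I)^{1/2}(T_n+\lambda_n I)^{-1}=I+(T+\lambda_n I)^{-1/2}(T-T_n)(T_n+\lambda_n I)^{-1}(T+\lambda_n I)^{1/2},
\]
is dimensionally inconsistent (the left side scales like $\lambda_n^{-1/2}$, the right like $1$). More importantly, an iteration or Neumann-series route would require $\lambda_n^{-1/2}\|(T+\lambda_n I)^{-1/2}(T_n-T)\|_{\mathrm{op}}<1$. With $\lambda_n=\omega n^{-1/(1+\theta)}$ and the bound $B_n\le(\omega^{-1}+\omega^{-(1+\theta)/2}\sqrt c)\sqrt{\lambda_n}$, this ratio is a constant \emph{independent of $n$}; no choice of $n_0$ makes it small. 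The paper instead uses the Cordes-type inequality $\|A^{1/2}B^{1/2}\|_{\mathrm{op}}\le\|AB\|_{\mathrm{op}}^{1/2}$ (Inequality~\ref{ineq-2}) together with the algebraic decomposition $BA^{-1}=(B-A)B^{-1}(B-A)A^{-1}+(B-A)B^{-1}+I$ (Inequality~\ref{ineq-1}); these give an \emph{additive} bound $1+\lambda_n^{-1/2}\|(T+\lambda_n I)^{-1/2}(T_n-T)\|_{\mathrm{op}}$ that requires no smallness. This constant then sits inside $c_7,c_8,c_9$ and no further threshold is needed for it.
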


It should be noted that we are unable to show the asymptotic normality of the non-functional parameters $\hatalpha{n}-\bm{\alpha}_{0}$ because it is influenced by the functional parameter $\hatfn-f_{0}$ as shown in \eqref{2-6}. And it is difficult to derive an analogy of the central limit theorem for the functional parameter.

The \eqref{ALFA0} and \eqref{beta0} in Corollary \ref{confidence} are useful high-probability events, which can be used to obtain the confidence balls for $\bm{\alpha}_{0}$ and $f_{0}$ under the distance $\|\hatalpha{n}-\bm{\alpha}_{0}\|^{2}$ and $\|T^{\frac{1}{2}}(\hat{f}_{n}-f_{0})\|^{2}$.  They are also helpful for constructing testing statistics, and thus they conceive non-asymptotic hypothesis testing for functional regressions, see \cite{yang2020non} for the case of non-parametric regressions.

\par Another corollary of the Theorem \ref{thm-1} is the excess prediction risk $\mathcal{E}(\hat{\eta}_{n})-\mathcal{E}(\eta_{0})=O_{p}(n^{-\frac{1}{1+\theta}})$. From the proof, we notice the convergence rate of the prediction risk contributed by the multivariate part of the predictor is $O_{p}(n^{-1})$, faster than the convergence rate corresponding to the functional part of the predictor, which is $O_{p}(n^{-\frac{1}{1+\theta}})$. Therefore, the convergence rate of the prediction risk of the partially functional linear model is the same as the optimal rate for the functional linear model \cite{cai2012minimax}.

\begin{rem}\label{rem-3}
{The assumption that the eigenvalues $\{\tau_{k}\}_{k \ge 1}$ decay as $\tau_{k}\le c^{\prime}k^{-2r}\,(r>\frac{1}{2})$ is equivalent to our assumption \ref{ass-4}. For one direction, see the following derivation.}
\begin{align*}
    D(\lambda_{n})
    &=\suminf\frac{\tau_{k}}{\tau_{k}+\lambda_{n}}\le \suminf\frac{c^{\prime}k^{-2r}}{c^{\prime}k^{-2r}+\lambda_{n}}=\suminf\frac{c^{\prime}}{c^{\prime}+\lambda_{n}k^{2r}}\le \int_{0}^{+\infty}\frac{c^{\prime}}{c^{\prime}+\lambda_{n}t^{2r}}\mathrm{d}t \\
    &\xlongequal{s=\lambda_{n}^{\frac{1}{2r}}t} \lambda_{n}^{-\frac{1}{2r}}\int_{0}^{+\infty}\frac{c^{\prime}}{c^{\prime}+s^{2r}}\mathrm{d}s \lesssim \lambda_{n}^{-\frac{1}{2r}}\asymp n^{\frac{1}{1+2r}}~~~(\lambda_{n}=\omega n^{-\frac{2r}{1+2r}}).
\end{align*}
\end{rem}
\begin{corollary}
Suppose the Assumptions \ref{ass-0}-\ref{ass-3} are satisfied. Assume the eigenvalues $\tau_{k}$ decay as $\tau_{k}\le c^{\prime}k^{-2r}$ for some $c^{\prime}>0$ and $r>\frac{1}{2}$. For any $\delta_1,\delta_3,\delta_4,\delta_5\in(0,1)$ and $\delta_{2}\in(0,2e^{-1})$, by taking $\lambda_{n}=\omega n^{-\frac{2r}{1+2r}}$, there exists an integer $n_{0}$ such that for $n>n_{0}$, we have with probability at least $1-\sum_{i=1}^{5}\delta_{i}$
\begin{equation*}
    \mathcal{E}(\hat{\eta}_{n})-\mathcal{E}(\eta_{0})\le
    (2\lambda_{\max}(2c_{4}c_{6}+c_{5})^{2}+2c_{9}^{2})n^{-1}+(4(c_{7}+c_{8})c_{9}\sqrt{\omega})n^{-\frac{1+4r}{2+4r}}+(2(c_{7}+c_{8})^{2}\omega)n^{-\frac{2r}{1+2r}},
\end{equation*}
where $c_{i}\,(4\le i\le 9)$ and $n_{0}$ are the same as those of the Theorem \ref{thm-1} except replacing $\theta$ by $\frac{1}{2r}$ and $c$ by a constant relevant to $c^{\prime}$ and $r$.
\end{corollary}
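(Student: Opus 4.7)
The plan is to reduce this corollary to Theorem \ref{thm-1} by verifying that the polynomial eigenvalue decay hypothesis implies Assumption \ref{ass-4} with an explicit exponent $\theta=\frac{1}{2r}$ and an explicit constant, and then translating the three rates appearing in \eqref{pre} under this substitution. The bulk of the work is exactly the computation sketched in Remark \ref{rem-3}, which I would make fully rigorous and then feed into Theorem \ref{thm-1} verbatim.

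First, I would certify the effective dimension bound. Since $\frac{t}{t+\lambda}$ is increasing in $t\ge 0$ and $\tau_k\le c'k^{-2r}$, I have
\begin{equation*}
D(\lambda)=\sum_{k=1}^{+\infty}\frac{\tau_k}{\tau_k+\lambda}\le \sum_{k=1}^{+\infty}\frac{c'k^{-2r}}{c'k^{-2r}+\lambda}=\sum_{k=1}^{+\infty}\frac{c'}{c'+\lambda k^{2r}}.
\end{equation*}
Bounding the sum by $\int_0^{+\infty}\frac{c'}{c'+\lambda t^{2r}}\mathrm{d}t$ (the summand is decreasing in $t$) and performing the substitution $s=\lambda^{1/(2r)}t$ as in Remark \ref{rem-3} yields $D(\lambda)\le \tilde c\,\lambda^{-1/(2r)}$, where $\tilde c:=\int_0^{+\infty}\frac{c'}{c'+s^{2r}}\mathrm{d}s$ is finite precisely because $2r>1$. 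This confirms Assumption \ref{ass-4} with $\theta=\frac{1}{2r}$ and $c=\tilde c$, so the hypotheses of Theorem \ref{thm-1} are all in force.

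Second, I would translate the statement of Theorem \ref{thm-1} with these parameters. The choice $\lambda_n=\omega n^{-1/(1+\theta)}$ becomes $\lambda_n=\omega n^{-2r/(1+2r)}$, matching the corollary. For the three exponents in \eqref{pre}, direct algebra gives
\begin{equation*}
-\frac{1}{1+\theta}=-\frac{2r}{1+2r},\qquad -\frac{2+\theta}{2+2\theta}=-\frac{4r+1}{4r+2}=-\frac{1+4r}{2+4r},
\end{equation*}
while the $n^{-1}$ rate is unchanged. The constants $c_4,\dots,c_9$ given in Theorem \ref{thm-1} depend on the effective dimension only through $\theta$ and $c$ via the recurring factor $\omega^{-1}+\omega^{-(1+\theta)/2}\sqrt c$, so substituting $\theta=\frac{1}{2r}$ and $c=\tilde c$ produces constants of the same form, which is exactly what the corollary claims when it says ``$c$ replaced by a constant relevant to $c'$ and $r$''. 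The threshold $n_0=\lceil\max\{N_1,N_2\}\rceil$ and the probability $1-\sum_{i=1}^{5}\delta_i$ carry over unchanged.

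There is no genuinely hard step here: the only point requiring care is the finiteness of $\tilde c$ and the monotone integral comparison used to pass from the sum to the integral, both of which hinge on $r>\frac{1}{2}$. Once those are checked, the corollary is a direct specialization of Theorem \ref{thm-1}, and I would present it in exactly that order—effective dimension bound, then invocation of Theorem \ref{thm-1}, then rate translation.
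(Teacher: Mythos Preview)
Your proposal is correct and follows exactly the paper's approach: the corollary is derived by using the integral comparison of Remark \ref{rem-3} to verify Assumption \ref{ass-4} with $\theta=\frac{1}{2r}$ and a constant depending on $c'$ and $r$, and then specializing Theorem \ref{thm-1} accordingly. The paper gives no further argument beyond that remark, so your write-up is in fact more detailed than what appears there.
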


A valuable and insightful application of the Theorem \ref{thm-1} is that we can consider the situation where the number of multivariate covariates $p$ increases as a function of $n$. {We need the following assumptions in the increasing dimension background.
\begin{ass}\label{ass-5}
The number of the multivariate covariates $p=p_{n}$ can increase as a function of sample size $n$. We write $n \gg f(p)$ when $f(p)n^{-1}\rightarrow 0$.
\end{ass}
\begin{ass}\label{ass-8}
The minimal and maximal eigenvalue of $D$ are bounded from below and above as $n$ increases: there exist positive constants $m^{\prime}$ and $M^{\prime}$ such that $m^{\prime} < \lambda_{\min}(D)<\lambda_{\max}(D) < M^{\prime}$ for all $n$ and $p$.
\end{ass}}

{According to the definition of $c_{i}\,(4\le i\le 9)$ and $N_{i}\,(i=1,2)$ in the Theorem \ref{thm-1} and the Assumption \ref{ass-8} on the eigenvalues of $D$, we have the following estimation on the asymptotic order of each coefficients in the Theorem \ref{thm-1}.}
\begin{align*}
    &c_{4}=O(p\log\left(p\right)),\quad c_{5}=O(p^{\frac{1}{2}}),\quad c_{6}=O(p^{\frac{3}{2}}\log(p)),\quad c_{7}=c_{8}=O(1),\\
    & c_{9}=O(p^{\frac{7}{2}}\log^{3}(p)),
    \quad N_{1}=O(p^{2}\log(p))\quad\text{and}\quad N_{2}=O(p^{\frac{2(1+\theta)}{\theta}}\log^{\frac{4(1+\theta)}{\theta}}(p)).
\end{align*}
From these orders, it implies
\begin{align*}
    &(2\lambda_{\max}(2c_{4}c_{6}+c_{5})^{2}+2c_{9}^{2})n^{-1}=O(p^{7}\log^{6}(p)n^{-1}),\quad
    (4(c_{7}+c_{8})c_{9}\sqrt{\omega})n^{-\frac{2+\theta}{2+2\theta}}=O(p^{\frac{7}{2}}\log^{3}(p)n^{-\frac{2+\theta}{2+2\theta}})\\
    &\text{and}\quad (2(c_{7}+c_{8})^{2}\omega)n^{-\frac{1}{1+\theta}}=O(n^{-\frac{1}{1+\theta}}).
\end{align*}
{
When letting $p^{7}\log^{6}(p)n^{-1}\rightarrow 0$, i.e. $n\gg O(p^{7}\log^{6}(p))$, we have as $n,p\rightarrow\infty$,
\begin{equation*}
    p^{\frac{7}{2}}\log^{3}(p)n^{-\frac{2+\theta}{2+2\theta}}\ll O( n^{\frac{1}{2}-\frac{2+\theta}{2+2\theta}})\rightarrow 0,
\end{equation*}
from which we see the upper bound in the Theorem \ref{thm-1} converges to $0$.}

{To apply the Theorem \ref{thm-1} in the increasing dimension background, except the convergence of the upper bound, we also need the condition $n > N_1$ and $n > N_2$ satisfied as $n,p\rightarrow \infty$. Notice $n\gg O(p^{7}\log^{6}(p))\gg O(p^{2}\log(p))=N_{1}$. If we let  $\frac{2(1+\theta)}{\theta}<7\Leftrightarrow \theta>\frac{2}{5}$, we have $n\gg N_{2}$ under the condition $n\gg O(p^{7}\log^{6}(p))$ after noticing $p\gg\log^{\varepsilon}(p)$ for any $\varepsilon\in\mathbb{R}$ and the asymptotic order of $N_{2}=O(p^{\frac{2(1+\theta)}{\theta}}\log^{\frac{4(1+\theta)}{\theta}}(p))$. Therefore we have the following prediction consistency for the increasing dimension situation of non-functional parameters.}

\begin{corollary}\label{col-2}
Under the Assumptions \ref{ass-0}-\ref{ass-8}, if the constant $\frac{2}{5}<\theta\le 1$ in the Assumption \ref{ass-4} and  $p^{7}\log^{6}(p)=o(n)$ in the Assumption \ref{ass-5}, we have the consistency for the excess prediction risk: $$\mathcal{E}(\hat{\eta}_{n})-\mathcal{E}(\eta_{0})=o_{p}(1).$$
\end{corollary}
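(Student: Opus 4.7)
The plan is to chain Theorem~\ref{thm-1} with the asymptotic order estimates of $c_4,\dots,c_9$ and $N_1,N_2$ that are listed immediately above the statement of the corollary, reducing the proof to three checks: (a) the sample size threshold $n\geq n_0=\lceil\max\{N_1,N_2\}\rceil$ in Theorem~\ref{thm-1} is eventually satisfied, (b) each of the three terms on the right-hand side of \eqref{pre} vanishes as $n\to\infty$, and (c) the high-probability statement can be upgraded to convergence in probability.

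For step (a), $N_1=O(p^2\log p)$ is trivially dominated by $p^7\log^6 p$, which is $o(n)$ by Assumption~\ref{ass-5}. The threshold $N_2=O(p^{2(1+\theta)/\theta}\log^{4(1+\theta)/\theta} p)$ is the delicate one; this is exactly where the hypothesis $\theta>\tfrac{2}{5}$ is used, since it is equivalent to $2(1+\theta)/\theta<7$. Picking any $\epsilon>0$ with $2(1+\theta)/\theta+\epsilon<7$, and absorbing the polylog factor using $\log^{a} p\ll p^{\epsilon}$ for any fixed $a$, yields $N_2\leq p^{2(1+\theta)/\theta+\epsilon}=o(p^7)=o(n)$, so $n\geq n_0$ holds eventually. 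This absorption argument is the only mildly non-routine step in the whole proof.

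For step (b), the first term $O(p^7\log^6 p\cdot n^{-1})$ is $o(1)$ directly by hypothesis, and the third term $O(n^{-1/(1+\theta)})$ is $o(1)$ trivially. The second term $O(p^{7/2}\log^3 p\cdot n^{-(2+\theta)/(2+2\theta)})$ needs one short check: since $p^{7/2}\log^3 p=(p^7\log^6 p)^{1/2}=o(n^{1/2})$, it suffices to verify $(2+\theta)/(2+2\theta)>1/2$, which is immediate from $2+\theta>1+\theta$. Hence this term is $o\bigl(n^{1/2}\cdot n^{-(2+\theta)/(2+2\theta)}\bigr)=o(1)$.

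For step (c), observe that each $c_i$ and each $N_i$ depends on the confidence parameters $\delta_1,\dots,\delta_5$ only through factors of the form $\log(1/\delta_i)$ or $\delta_i^{-1/2}$, which do not affect the polynomial-in-$p$ orders used above. Thus for any fixed choice of $\delta_i$'s the right-hand side of \eqref{pre} is still $o(1)$ by step (b). For any $\varepsilon>0$, first fix $\sum_i\delta_i$ small, then take $n$ large enough that the right-hand side of \eqref{pre} is below $\varepsilon$; this gives $P\bigl(\mathcal{E}(\hat\eta_n)-\mathcal{E}(\eta_0)>\varepsilon\bigr)\leq\sum_i\delta_i$ eventually. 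Letting $\sum_i\delta_i\downarrow 0$ then delivers the desired $o_p(1)$ conclusion. Everything other than the $N_2$ absorption in step (a) is pure bookkeeping for Theorem~\ref{thm-1}.
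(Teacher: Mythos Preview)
Your proposal is correct and follows essentially the same route as the paper: both arguments read off the polynomial-in-$p$ orders of $c_4,\dots,c_9,N_1,N_2$, verify $n\ge n_0$ eventually via the equivalence $\theta>\tfrac{2}{5}\Leftrightarrow 2(1+\theta)/\theta<7$ together with absorption of the polylog factor, and check that each of the three terms in \eqref{pre} is $o(1)$ by the same square-root trick $p^{7/2}\log^{3}p=o(n^{1/2})$ for the middle term. Your step~(c), making the passage from the high-probability bound to $o_p(1)$ explicit by first fixing the $\delta_i$'s and then sending $n\to\infty$, is a welcome addition that the paper leaves implicit.
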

\begin{rem}
If we assume the eigenvalues $\tau_{k}$ decay as $\tau_{k}\le c^{\prime}k^{-2r}\,(r>\frac{1}{2})$ in the increasing-dimensional setting, by applying the Corollary \ref{col-2} and noticing $\theta=\frac{1}{2r}$, we need to further assume $r<\frac{5}{4}$ to obtain the prediction consistency, which means the convergence rate of eigenvalues can not be too fast. Intuitively, when $p$ increases, $r$ can not be too large or equivalently, the effective dimension $D(\lambda_{n})\asymp n^{\frac{1}{1+2r}}$ can not be too small. It implies we need to find a trade-off between the number of non-functional predictors and the effective dimension to get the prediction consistency.
\end{rem}

The prediction consistency theory has been well-established for non-parametric and high-dimensional statistics; see \cite{zhuang2018maximum} for the recent development of general regularized maximum likelihood estimators. However, their works mainly aim for non-parametric or high-dimensional models and do not cover the semi-parametric case as studied in our paper.

\section{Minimax Lower Bound}\label{sec-4}
\hspace{4mm} {In this section, we derive a minimax lower bound for the excess prediction risk in the following Theorem \ref{thm-2Euclidean} when $p \to \infty$. If $p$ is fixed, the minimax lower bound is postponed at the end of Section \ref{se:FIX}.}  To verify the optimality of the upper bound of the prediction risk for the proposed estimator, the result on the minimax lower bound below shows the prediction risk of our estimator achieves the theoretical lower bound caused by the intrinsic limitation of the PFLM. Let $P_{\bm{\alpha}_{0},\beta_{0}}$ be the probability taken over the space $(Z,\bm{X},Y)$ where $Z$ is generated by the true parameter $Z=\bm{X}^{T}\bm{\alpha}_{0}+\innerproduct{Y}{\beta_{0}}+\varepsilon$. Before stating the main result, we need a regularity assumption relevant to the Kullback-Leibler of the random noises.

\begin{ass}\label{ass-6}
{ For different $\beta_{1},\beta_{2}\in\mathcal{H}(K)$ and $\bm{\alpha}_1, \bm{\alpha}_2 \in\mathbb{R}^{p}$. We assume that the Kullback-Leibler distance between $P_{\alpha_1,\beta_1}$ and $P_{\bm\alpha_2,\bm\beta_{2}}$ can be bounded by
$$K(P_{\bm\alpha_1,\beta_1} | P_{\bm\alpha_2,\beta_{2}})=\operatorname{E}_{\bm\alpha_1,\beta_1}\left(\log\left( \frac{\mathrm{d}P_{\bm\alpha_1,\beta_1}}{\mathrm{d}P_{\alpha_2,\beta_2}}\right)\right)\le K_{\sigma^{2}} \operatorname{E}[\innerproduct{Y}{\beta_{1}-\beta_{2}}+\bm{X}^{T}(\bm\alpha_1-\bm\alpha_2)]^{2},
$$
where $K_{\sigma^{2}}>0$ is a variance-dependent constant and $\operatorname{E}_{\bm\alpha_1,\beta_1}$ means the expectation is taken over $P_{\bm\alpha_1,\beta_1}$.}
\end{ass}

The examples of constant $K_{\sigma^{2}}$ include  noises of exponential families (see \cite{abramovich2016model,du2014penalized}) and noises with self-concordant log-density function (see \cite{ostrovskii2018finite}). If we assume the random noise $\varepsilon\sim N(0,\sigma^{2})$, the constant
\begin{equation}\label{Ksigma2}
K_{\sigma^{2}}=\frac{1}{2\sigma^{2}}.
\end{equation}
The proof is left to Section \ref{App-C}. Now we state the main theorem of this section, of which the proof is left to Section \ref{sec-6}.

\begin{theorem}\label{thm-2Euclidean}
Under the Assumptions \ref{ass-3} and \ref{ass-6} with $\operatorname{E}\bm{X}=\bm{0}$, suppose the eigenvalues $\{\tau_{k}\}_{k\ge 1}$ of the operator $T$ decay as $\tau_{k}=t_{0}k^{-2r}$ for some $r, t_0 \in(0,\infty)$, then for $\rho\in\left(0,\frac{1}{8}\right)$, there exists a sequence $\{N_{n}\}_{n\ge1}$ satisfying
$$\log N_n \ge \left[\left(\frac{8 \left(t_0+\lambda_{\max}(D)\right) K_{\sigma^{2}} }{\rho \log2}\right)^{\frac{1}{1+2r}}n^{\frac{1}{1+2r}} +\left(\frac{8 \left(t_0+\lambda_{\max}(D)\right) K_{\sigma^{2}} }{\rho \log2}\right)^{\frac{2r}{1+2r}}n^{\frac{2r}{1+2r}}\right]\frac{\log2}{8}$$
such that when $n\ge\frac{\rho\log2}{8t_0K_{\sigma^{2}}}$ and $p=O(n^{\frac{2r}{1+2r}})<n$, the excess prediction risk satisfies
\begin{small}
\begin{equation*}
    \inf_{\tilde{\eta}}\sup_{\eta_0\in\mathbb{R}^p\times \mathcal{H}(K)}
    P\left(\mathcal{E}(\tilde{\eta})-\mathcal{E}(\eta_0)\ge
    \left(\frac{8\left( t_0 + \lambda_{\max}(D) \right)K_{\sigma^{2}}}{\rho\log2}\right)^{-\frac{2r}{1+2r}}\cdot\frac{2^{-2r}t_0 + \lambda_{\min}(D)}{2^{2r+3}n^{\frac{2r}{1+2r}}}\right)\ge \frac{\sqrt{N_n }}{1+\sqrt{N_n }}\left(1-4 \rho-\sqrt{\frac{4 \rho}{\log N_n }}\right),
\end{equation*}
\end{small}
where we identify the prediction rule $\tilde{\eta}$ as a arbitrary estimator $(\tilde{\bm{\alpha}},\tilde{\beta})$ based on the training samples $\{ (Z_{i},\bm{X}_{i},Y_{i}) \}_{i=1}^{n}$ ,and view $\eta_{0}$ as the true parameter $(\bm{\alpha}_{0},\beta_{0})\in\mathbb{R}^{p}\times\mathcal{H}(K)$. We emphasize the probability $P$ is taken over the product space of training samples $\{ (Z_{i},\bm{X}_{i},Y_{i}) \}_{i=1}^{n}$ generated by $\eta_{0}=(\bm{\alpha}_{0},\beta_{0})$.
\end{theorem}

In the existing literature, most results about the minimax bound are in the asymptotic sense, while the constants in our result are precise and specified. Letting $n \to \infty$ under the Assumption \ref{ass-8}, the lower bound inequality in the Theorem \ref{thm-2Euclidean}  implies as $N_n \to \infty$
\begin{equation*}
    \liminf\limits_{n\rightarrow\infty}\inf_{\tilde{\eta}}\sup_{\eta_{0}\in\mathbb{R}^{p}\times\mathcal{H}(K)}
    P(\mathcal{E}(\tilde{\eta})-\mathcal{E}(\eta_{0})\ge b^{\prime}\rho^{\frac{2r}{1+2r}}n^{-\frac{2r}{1+2r}})\ge 1-2\rho
\end{equation*}
for constant $b^{\prime}>0$, by which we get the asymptotic minimax lower bound:
$$ \lim\limits_{a\rightarrow0}\lim\limits_{n\rightarrow\infty}\inf_{\tilde{\eta}}\sup_{\eta_{0}\in\mathbb{R}^{p}\times\mathcal{H}(K)}
    P(\mathcal{E}(\tilde{\eta})-\mathcal{E}(\eta_{0})\ge an^{-\frac{2r}{1+2r}})=1.$$

\section{Proof of theorems and key Lemmas}
\subsection{Proof of the Theorem \ref{thm-1}}\label{sec-5}
\hspace{4mm} When setting $\lambda_{n}=\omega n^{-\frac{1}{1+\theta}}$, we have $ \frac{1}{n}<n^{-\frac{1}{1+\theta}}=\frac{\lambda_{n}}{\omega}=\frac{\sqrt{\lambda_{n}}}{\omega}\sqrt{\lambda_{n}}$
and
\begin{equation*}
    \frac{D(\lambda_{n})}{n}\le \frac{c(\omega n^{-\frac{1}{1+\theta}})^{-\theta}}{n}=c\omega^{-\theta}n^{-\frac{1}{1+\theta}}=c\omega^{-(1+\theta)}\lambda_{n},
\end{equation*}
by which we have
\begin{equation}\label{eq;BN}
B_{n}=\frac{1}{n\sqrt{\lambda_{n}}}+\sqrt{\frac{D(\lambda_{n})}{n}}\le\left(\omega^{-1}+\omega^{-\frac{1+\theta}{2}}\sqrt{c}\right)\sqrt{\lambda_{n}}
\end{equation}
 in Lemma \ref{3-1} and $n\lambda_{n}=\omega n^{\frac{\theta}{1+\theta}}\ge \omega$.

\par Applying Lemmas \ref{3-2}, \ref{3-5} and \ref{3-7} to \eqref{2-6}, when $n>n_{0}$, we have with probability at least $1-\delta_{2}-\delta_{4}-\delta_{5}$
\begin{align}
&~~~~\|\hatalpha{n}-\bm{\alpha}_{0}\|_2
    \le \opnorm{D_{n}^{-1}}\opnorm{G_{n}}\|\hatfn-f_{0}\|+\opnorm{D_{n}^{-1}}\|a_{n}\| \notag\\
    &\le \frac{3}{2}\opnorm{D^{-1}}\frac{\kappa M_2[v+8M_1{\log^{1/2} ({p}/{\delta_{2}})}]}{\sqrt{n}}\|\hatfn-f_{0} \|+\frac{3}{2}\opnorm{D^{-1}}\frac{c_{3}}{\sqrt{\delta_{4}}\sqrt{n}}=\frac{c_{4}}{\sqrt{n}}\|\hatfn-f_{0}
    \|+\frac{c_{5}}{\sqrt{n}},  \label{4-1}
\end{align}
where we let $c_{4}:={\kappa M_2[v+8M_1{\log^{1/2} ({p}/{\delta_{2}})}]}\frac{3\opnorm{D^{-1}}}{2}$ and $ c_{5}:=\frac{3c_{3}\opnorm{D^{-1}}}{2\sqrt{\delta_{4}}}$ with $c_{3}:=\sqrt{p}\sigma v$.
\par First, we give a crude bound for $\|\hatfn-f_{0}\|$ in \eqref{4-1}. According to \eqref{2-5}, it gives
\begin{align*}
    \|\hatfn-f_{0}\|
    &\le \lambda_{n}\opnorm{(T_{n}+\lambda_{n}I)^{-1}}\|f_{0}\|+\opnorm{(T_{n}+\lambda_{n}I)^{-1}}\opnorm{H_{n}}\|\hatalpha{n}-\bm{\alpha}_{0} \|+\|(T_{n}+\lambda_{n}I)^{-1}g_{n}\|  \\
    &=:I_{1}+I_{2}+I_{3}.
\end{align*}

For the term $I_{1}$, we have $I_{1}\le \|f_{0}\|$ because $\opnorm{(T_{n}+\lambda_{n}I)^{-1}}\le \frac{1}{\lambda_{n}}$.

For the term $I_{2}$, combining \eqref{4-1} we have with probability at least $1-\delta_{2}-\delta_{4}-\delta_{5}$
\begin{align*}
    I_{2}
    &\le \frac{1}{\lambda_{n}}\frac{c_{2}\log\left(\frac{2p}{\delta_{2}}\right)}{\sqrt{n}}\frac{c_{4}}{\sqrt{n}}\|\hatfn-f_{0}\|+\frac{1}{\lambda_{n}}\frac{c_{2}\log\left(\frac{2p}{\delta_{2}}\right)}{\sqrt{n}}\frac{c_{5}}{\sqrt{n}} \\
    &= \frac{c_{2}c_{4}\log\left(\frac{2p}{\delta_{2}}\right)}{n\lambda_{n}}\|\hatfn-f_{0}\|+\frac{c_{2}c_{5}\log\left(\frac{2p}{\delta_{2}}\right)}{n\lambda_{n}}\le \frac{c_{2}c_{4}\log\left(\frac{2p}{\delta_{2}}\right)}{n\lambda_{n}}\|\hatfn-f_{0}\|+\frac{c_{2}c_{5}}{\omega}\log\left(\frac{2p}{\delta_{2}}\right),~\text{by using}~n\lambda_{n}\ge \omega.
\end{align*}

For the term $I_{3}$, we obtain with probability at least $1-\delta_{3}$ by Lemma \ref{3-4} and $\opnorm{(T_{n}+\lambda_{n}I)^{-1}}\le \frac{1}{\lambda_{n}}$
\begin{equation*}
    I_{3}\le \opnorm{(T_{n}+\lambda_{n}I)^{-\frac{1}{2}}}
    \|(T_{n}+\lambda_{n}I)^{-\frac{1}{2}}g_{n}\|\le
    \frac{1}{\sqrt{\lambda_{n}}}\frac{\sigma}{\sqrt{\delta_{3}}}B_{n}\le \frac{\sigma\left(\omega^{-1}+\omega^{-\frac{1+\theta}{2}}\sqrt{c}\right)}{\sqrt{\delta_{3}}},
\end{equation*}
where we use \eqref{eq;BN} in the last step.

Thus we can bound $\|\hatfn-f_{0}\|$ by $I_{1}$, $I_{2}$ and $I_{3}$
\begin{align*}
    \|\hatfn-f_{0}\|
    &\le \|f_{0}\|+\frac{c_{2}c_{4}\log\left(\frac{2p}{\delta_{2}}\right)}{n\lambda_{n}}\|\hatfn-f_{0}\|+\frac{c_{2}c_{5}}{\omega}\log\left(\frac{2p}{\delta_{2}}\right)+\frac{\sigma\left(\omega^{-1}+\omega^{-\frac{1+\theta}{2}}\sqrt{c}\right)}{\sqrt{\delta_{3}}}\\
    &=\frac{c_{2}c_{4}\log\left(\frac{2p}{\delta_{2}}\right)}{n\lambda_{n}}\|\hatfn-f_{0}\|+c_{6},~\text{where}~c_{6}:=\|f_{0}\|+\frac{c_{2}c_{5}}{\omega}\log\left(\frac{2p}{\delta_{2}}\right)+\frac{\sigma\left(\omega^{-1}+\omega^{-\frac{1+\theta}{2}}\sqrt{c}\right)}{\sqrt{\delta_{3}}}.
\end{align*}

Notice when $\lambda_{n}=\omega n^{-\frac{1}{1+\theta}}$ and $n\lambda_{n}=\omega n^{\frac{\theta}{1+\theta}}$, and we have
\begin{equation*}
    \frac{c_{2}c_{4}\log\left(\frac{2p}{\delta_{2}}\right)}{n\lambda_{n}}\le \frac{1}{2}~\text{for}~n>\left(  \frac{2c_{2}c_{4}}{\omega}\log\left(\frac{2p}{\delta_{5}}\right) \right)^{\frac{1+\theta}{\theta}}.
\end{equation*}
Therefore, when $n>n_0$, we obtain with probability at least $1-\delta_{2}-\delta_{3}-\delta_{4}-\delta_{5}$
\begin{center}
$ \frac{1}{2}\|\hatfn-f_{0}\|
    \le \|\hatfn-f_{0}\|-\frac{c_{2}c_{4}\log\left(\frac{2p}{\delta_{2}}\right)}{n\lambda_{n}}\|\hatfn-f_{0}\| \le c_{6},$
which implies $\|\hatfn-f_{0}\|\le 2c_{6}$.
\end{center}
\par Second, we turn to bound $\|\hatalpha{n}-\bm{\alpha}_{0}\|$. By \eqref{4-1}, we have with probability at least $1-\delta_{2}-\delta_{3}-\delta_{4}-\delta_{5}$,
\begin{equation}\label{ALFA}
    \|\hatalpha{n}-\bm{\alpha}_{0}\|\le
    \frac{2c_{4}c_{6}+c_{5}}{\sqrt{n}}.
\end{equation}
\par Then we find a way to bound $\|T^{\frac{1}{2}}(\hatfn-f_{0})\|$.
According to \eqref{2-5}, we have
\begin{align*}
    \|T^{\frac{1}{2}}(\hatfn-f_{0})\|
    &\le \lambda_{n}\opnorm{T^{\frac{1}{2}}(T_{n}+\lambda_{n}I)^{-1}}\|f_{0}\|+\opnorm{T^{\frac{1}{2}}(T_{n}+\lambda_{n}I)^{-1}}\opnorm{H_{n}}\|\hatalpha{n}-\bm{\alpha}_{0}\|+\\
    &+\|T^{\frac{1}{2}}(T_{n}+\lambda_{n}I)^{-1}g_{n}\|=:E_{1}+E_{2}+E_{3}.
\end{align*}
\par For the term $E_{1}$, we have with probability at least $1-\delta_{1}$ by Inequality \ref{ineq-2} and Lemma \ref{3-1}
\begin{align}\label{eq:xxx}
    E_{1}
    &\le \lambda_{n}\opnorm{(T+\lambda_{n}I)^{\frac{1}{2}}(T_{n}+\lambda_{n}I)^{-\frac{1}{2}}}\opnorm{(T_{n}+\lambda_{n}I)^{-\frac{1}{2}}}\|f_{0}\| \nonumber \\
    &\le \lambda_{n}\left( \frac{1}{\sqrt{\lambda_{n}}}\opnorm{(T_{n}+\lambda_{n}I)^{-\frac{1}{2}}(T_{n}-T)}+1 \right)\frac{1}{\sqrt{\lambda_{n}}}\|f_{0}\|  \nonumber\\
    &\le \sqrt{\lambda_{n}}\|f_{0}\|\left( c_{1}\log\left(\frac{2}{\delta_{1}}\right)\frac{B_{n}}{\sqrt{\lambda_{n}}}  +1\right) \le \|f_{0}\|\left(c_{1}\left(\omega^{-1}+\omega^{-\frac{1+\theta}{2}}\sqrt{c}\right)\log\left(\frac{2}{\delta_{1}}\right)+1\right)\sqrt{\lambda_{n}}\\
    &=c_{7}\sqrt{\lambda_{n}},\nonumber
\end{align}
where we use the Inequality \ref{ineq-2} in the second inequality and the \eqref{eq;BN} in the last inequality
and define $c_{7}:=\|f_{0}\|\left(c_{1}\left(\omega^{-1}+\omega^{-\frac{1+\theta}{2}}\sqrt{c}\right)\log\left(\frac{2}{\delta_{1}}\right)+1\right).$

\par For the term $E_{3}$, by applying Inequality \ref{ineq-2} and Lemma \ref{3-4}, we have with probability at least $1-\delta_
{1}-\delta_{3}$
\begin{align*}
    E_{3}
    &\le \opnorm{(T+\lambda_{n}I)^{\frac{1}{2}}(T_{n}+\lambda_{n}I)^{-\frac{1}{2}}}\opnorm{(T_{n}+\lambda_{n}I)^{-\frac{1}{2}}(T+\lambda_{n}I)^{\frac{1}{2}}}\|(T+\lambda_{n}I)^{-\frac{1}{2}}g_{n}\| \\
    &\le \left( \frac{1}{\sqrt{\lambda_{n}}}\opnorm{(T_{n}+\lambda_{n}I)^{-\frac{1}{2}}(T_{n}-T)}+1 \right)^{2}\frac{\sigma}{\sqrt{\delta_{3}}}B_{n} \\
    &\le \left( c_{1}\log\left(\frac{2}{\delta_{1}}\right)\frac{B_{n}}{\sqrt{\lambda_{n}}}  +1\right)^{2}\frac{\sigma}{\sqrt{\delta_{3}}}B_{n}  \\
    &\le \left( c_{1}\left(\omega^{-1}+\omega^{-\frac{1+\theta}{2}}\sqrt{c}\right)\log\left(\frac{2}{\delta_{1}}\right)  +1\right)^{2}\frac{\sigma\left(\omega^{-1}+\omega^{-\frac{1+\theta}{2}}\sqrt{c}\right)}{\sqrt{\delta_{3}}}\sqrt{\lambda_{n}}=c_{8}\sqrt{\lambda_{n}},
\end{align*}
where in the last inequality $c_{8}:=\left[ c_{1}\left(\omega^{-1}+\omega^{-\frac{1+\theta}{2}}\sqrt{c}\right)\log\left(\frac{2}{\delta_{1}}\right)  +1\right]^{2}\frac{\sigma(\omega^{-1}+\omega^{-\frac{1+\theta}{2}}\sqrt{c})}{\sqrt{\delta_{3}}}.$

\par Notice we have \eqref{ALFA} with probability at least $1-\delta
_{2}-\delta_{3}-\delta_{4}-\delta_{5}$. Therefore, for the term $E_{2}$ we obtain with probability at least $1-\sum_{i=1}^{5}\delta_{i}$, by using $\opnorm{(T+\lambda_{n}I)^{\frac{1}{2}}(T_{n}+\lambda_{n}I)^{-\frac{1}{2}}}\le  (c_{1}(\coeffone)\log\left(\frac{2}{\delta_{1}}\right)  +1$ in \eqref{eq:xxx}, $\opnorm{(T_{n}+\lambda_{n}I)^{-\frac{1}{2}}}\le \frac{1}{\sqrt{\lambda_{n}}}$, Lemma \ref{3-2} and \eqref{ALFA},
\begin{align*}
    E_{2}
    &\le \opnorm{(T+\lambda_{n}I)^{\frac{1}{2}}(T_{n}+\lambda_{n}I)^{-\frac{1}{2}}}\opnorm{(T_{n}+\lambda_{n}I)^{-\frac{1}{2}}}\opnorm{H_{n}}\| \hatalpha{n}-\bm{\alpha}_{0}\| \\
    &\le \left[ (c_{1}(\coeffone)\log\left(\frac{2}{\delta_{1}}\right)  +1\right]\frac{1}{\sqrt{\lambda_{n}}}\cdot\frac{\kappa M_2[v+8M_1{\log^{1/2} ({p}/{\delta_{2}})}]}{\sqrt{n}}\cdot\frac{2c_{4}c_{6}+c_{5}}{\sqrt{n}} \\
    &\le \frac{\kappa M_2[v+8M_1{\log^{1/2} ({p}/{\delta_{2}})}](2c_{4}c_{6}+c_{5})}{\sqrt{\omega}}\left[ c_{1}(\coeffone)\log\left(\frac{2}{\delta_{1}}\right)  +1\right]\frac{1}{\sqrt{n}}=\frac{c_{9}}{\sqrt{n}},
\end{align*}
where in the last step we use $n\lambda_{n}\ge \omega$ and define
\begin{equation*}
    c_{9}:=\frac{\kappa M_2(2c_{4}c_{6}+c_{5})}{\sqrt{\omega}}\left[ c_{1}(\coeffone)\log\left(\frac{2}{\delta_{1}}\right)  +1\right]\left(v+8M_1{\log^{1/2} \left(\frac{p}{\delta_{2}}\right)}\right).
\end{equation*}

\par Thus, we bound $\|T^{\frac{1}{2}}(\hatfn-f_{0})\|$ by
\begin{equation*}
    \|T^{\frac{1}{2}}(\hatfn-f_{0})\|
    \le E_{1}+E_{2}+E_{3} \le (c_{7}+c_{8})\sqrt{\lambda_{n}}+\frac{c_{9}}{\sqrt{n}}.
\end{equation*}
Recall the excess prediction risk can be bounded by
\begin{equation*}
\mathcal{E}(\hat{\eta}_{n})-\mathcal{E}(\eta_{0})
\le 2\lambda_{\max}\|\hatalpha{n}-\bm{\alpha}_{0}\|_2^{2}+2\|T^{\frac{1}{2}}(\hat{f}_{n}-f_{0})\|^{2},
\end{equation*}
based on which we further have
\begin{align*}
    \mathcal{E}(\hat{\eta}_{n})-\mathcal{E}(\eta_{0})
    &\le 2\lambda_{\max}\left( \frac{2c_{4}c_{6}+c_{5}}{\sqrt{n}} \right)^{2}+ 2\left( (c_{7}+c_{8})\sqrt{\lambda_{n}}+\frac{c_{9}}{\sqrt{n}} \right)^{2}=C_{1}n^{-1} +C_{2} \sqrt{\frac{\lambda_{n}}{n}} +C_{3}\lambda_{n},
\end{align*}
where $C_{1}:=2\lambda_{\max}(2c_{4}c_{6}+c_{5})^{2}+2c_{9}^{2},~
    C_{2}:=4(c_{7}+c_{8})c_{9}~\text{and}~
    C_{3}:=2(c_{7}+c_{8})^{2}.$

Finally we get the desired conclusion after we notice $\sqrt{\frac{\lambda_{n}}{n}}=\sqrt{\omega}n^{-\frac{2+\theta}{2+2\theta}}$ in the above proof. $\hfill\square$

\subsection{Proof of the Theorem \ref{thm-2Euclidean}}\label{sec-6}
{The whole proof of Theorem \ref{thm-2Euclidean} is served for the condition where $p$ has increasing number of dimension.}
\hspace{4mm} Let $M$ be the smallest integer greater than $b_{0}n^{\frac{1}{1+2r}}$ and $L=M^{2r}$, where constant $b_{0}$ will be defined in later proof. For two binary sequences $\gamma=(\gamma_{L+1},\cdots,\gamma_{2L})\in \{0,1 \}^{L}$ and $\theta=(\theta_{M+1},\cdots,\theta_{2M})\in \{0,1 \}^{M}$, define
\begin{equation*}
\beta_{\theta}=M^{-\frac{1}{2}}\sum\limits_{k=M+1}^{2M}\theta_{k}\K \varphi_{k} \quad \text{and} \quad \bm{\alpha}_{\gamma}=\frac{1}{L}(\gamma_{L+1},\cdots,\gamma_{2L}).
\end{equation*}
By applying $\innerproduct{\K\varphi_{j}}{\K\varphi_{k}}_{K}=\innerproduct{\varphi_{j}}{L_{K}\varphi_{k}}_{K}=\innerproduct{\varphi_{j}}{\varphi_{k}}=\delta_{j k}$, we have $\|\bm{\alpha}_{\gamma}\|_1 \le 1$ and  $\beta_{\theta}\in \mathcal{H}(K)$ noticing
\begin{align*}
    \left\|\beta_{\theta}\right\|_{K}^{2}
    &=\left\|M^{-\frac{1}{2}}\sum\limits_{k=M+1}^{2M}\theta_{k}\K \varphi_{k}\right\|_{K}^{2}  =\sum\limits_{k=M+1}^{2M}M^{-1}\theta_{k}^{2}\left\|\K\varphi_{k}\right\|_{K}^{2} \le M^{-1}\sum\limits_{k=M+1}^{2M} \left\|\K\varphi_{k}\right\|_{K}^{2} =1.
\end{align*}

Using the Lemma \ref{VGlemma}, there exist sets $\Gamma=\{\gamma^{i}\}_{i=0}^{N_\gamma}\subset\{0,1\}^{L}$ and $\Theta=\{\theta^{i}\}_{i=0}^{N_\theta}\subset\{0,1\}^{M}$ such that
\begin{center}
\rm{(ia)} $\gamma^{0}=(0,\cdots,0)$,~~
\rm{(iia)} $H(\gamma^{i}, \gamma^{j}) > \frac{L}{8}$ for all $i \neq j$,~~
\rm{(iiia)} $N_\gamma \geq 2^{\frac{L}{8}}$,
\end{center}
\begin{center}
\rm{(ib)} $\theta^{0}=(0,\cdots,0)$,~~
\rm{(iib)} $H(\theta^{i}, \theta^{j}) > \frac{M}{8}$ for all $i \neq j$,~~
\rm{(iiib)} $N_\theta \geq 2^{\frac{M}{8}}$,
\end{center}
where $H(\theta,\theta^{\prime})$ is the Hamming distance between $\theta$ and $\theta^{\prime}$. Define the combined parameter $$\kappa=(\gamma_{L+1},\cdots,\gamma_{2L},\theta_{M+1},\cdots,\theta_{2M})\in \{0,1 \}^{L+M}.$$ The construction of $\Gamma$ and $\Theta$ implies that there exists a set $\Theta=\{\kappa^{i}\}_{i=0}^{N_\kappa} \subset\{0,1\}^{L+M}$ such that
\begin{center}
\rm{(ic)} $\kappa^{0}=(0, \cdots, 0)$,~~
\rm{(iic)} $H(\kappa^{i}, \kappa^{j})>\frac{L+M}{8}$ for all $i \neq j$,~~
\rm{(iiic)} $N_\kappa \geq 2^{\frac{L+M}{8}}$.
\end{center}

Let $P^{n}_{\bm{\alpha}_{0},\beta_{0}}$ be the joint distribution on the product space of training samples $\{(Z_{i},\bm{X}_{i},Y_{i})\}_{i=1}^{n}$ generated by the true parameter $(\bm{\alpha}_{0},\beta_{0})$, where $Z_{i}=\bm{X}_{i}^{T}\bm{\alpha}_{0}+\innerproduct{Y_{i}}{\beta_{0}}+\varepsilon_{i}$, and $P_{\bm{\alpha}_{0},\beta_{0}}$ be the distribution on a single sample $(Z,\bm{X},Y)$, where $Z=\bm{X}^{T}\bm{\alpha}_{0}+\innerproduct{Y}{\beta_{0}}+\varepsilon$.
\par By the independence of the training samples, for different $\theta,\theta^{\prime}\in\Theta$ and $\gamma,\gamma^{\prime}\in\Gamma$, we have
\begin{equation*}
    \log\left(\frac{\mathrm{d}P^{n}_{\bm{\alpha}_{\gamma^{\prime}},\beta_{\theta^{\prime}}}}{\mathrm{d}P^{n}_{\bm{\alpha}_{\gamma},\beta_{\theta}}}(\{(Z_{i},\bm{X}_{i},Y_{i})\}_{i=1}^n)\right)=\sum\limits_{i=1}^{n}\log\left( \frac{\mathrm{d}P_{\bm{\alpha}_{\gamma^{\prime}},\beta_{\theta^{\prime}}}}{\mathrm{d}P_{\bm{\alpha}_{\gamma},\beta_{\theta}}}(Z_{i},\bm{X}_{i},Y_{i}) \right).
\end{equation*}
Using the Assumption \ref{ass-6}, we can bound the Kullback-Leibler distance between $P^{n}_{\bm{\alpha}_{\gamma^{\prime}},\beta_{\theta^{\prime}}}$ and $P^{n}_{\bm{\alpha}_{\gamma},\beta_{\theta}}$
\begin{align*}
    K(P^{n}_{\bm{\alpha}_{\gamma^{\prime}},\beta_{\theta^{\prime}}} | P^{n}_{\bm{\alpha}_{\gamma},\beta_{\theta}})
    &=\sum\limits_{i=1}^{n}\operatorname{E}_{\bm{\alpha}_{\gamma^{\prime}},\beta_{\theta^{\prime}}}\left(\log\left( \frac{\mathrm{d}P_{\bm{\alpha}_{\gamma^{\prime}},\beta_{\theta^{\prime}}}}{\mathrm{d}P_{\bm{\alpha}_{\gamma},\beta_{\theta}}}\right)\right)\le n K_{\sigma^{2}} \operatorname{E}[\innerproduct{Y}{\beta_{\theta^{\prime}}-\beta_{\theta}}+\bm{X}^{T}(\bm{\alpha}_{\gamma^{\prime}}-\bm{\alpha}_{\gamma})]^{2}\\
    & \le 2n K_{\sigma^{2}} \operatorname{E}(\innerproduct{Y}{\beta_{\theta^{\prime}}-\beta_{\theta}})^2+2n K_{\sigma^{2}} \operatorname{E}[\bm{X}^{T}(\bm{\alpha}_{\gamma^{\prime}}-\bm{\alpha}_{\gamma})]^{2}.
\end{align*}
Noticing $\innerproduct{\K\varphi_{j}}{L_{C}\K\varphi_{k}}=\innerproduct{\varphi_{j}}{T\varphi_{k}}=\tau_{k}\delta_{j k}$, we have
\begin{align*}
    \operatorname{E}(\innerproduct{Y}{\beta_{\theta^{\prime}}-\beta_{\theta}})^{2}
    &=\innerproduct{\beta_{\theta^{\prime}}-\beta_{\theta}}{L_{C}(\beta_{\theta^{\prime}}-\beta_{\theta})} \\
    &=\left\langle  M^{-\frac{1}{2}}\sum\limits_{k=M+1}^{2M}(\theta^{\prime}_{k}-\theta_{k})\K \varphi_{k} , M^{-\frac{1}{2}}\sum\limits_{k=M+1}^{2M}(\theta^{\prime}_{k}-\theta_{k})L_{C}\K \varphi_{k}   \right\rangle \\
    &=M^{-1} \sum\limits_{k=M+1}^{2M} (\theta^{\prime}_{k}-\theta_{k})^{2}\tau_{k} \le M^{-1}\tau_{M}\sum\limits_{k=M+1}^{2M} (\theta^{\prime}_{k}-\theta_{k})^{2} =M^{-1}\tau_{M}H(\theta^{\prime},\theta)\le \tau_{M}= t_0M^{-2r},
\end{align*}
where the last inequality is by the eigen-decay condition. For parametric part, we have
\begin{center}
$\operatorname{E}[\bm{X}^{T}(\bm{\alpha}_{\gamma^{\prime}}-\bm{\alpha}_{\gamma})]^{2}\le \lambda_{\max}(D)\|\bm{\alpha}_{\gamma^{\prime}}-\bm{\alpha}_{\gamma}\|_2^{2}=\lambda_{\max}(D) H({\gamma^{\prime}}, \gamma)/L^2 \le \lambda_{\max}(D)/L=\lambda_{\max}(D)M^{-2r}$,
\end{center}
 from which we obtain
\begin{center}
$ K(P^{n}_{\bm{\alpha}_{\gamma^{\prime}},\beta_{\theta^{\prime}}} | P^{n}_{\bm{\alpha}_{\gamma},\beta_{\theta}})\le 2\left(t_0+\lambda_{\max}(D)\right) n K_{\sigma^{2}} M^{-2r}$.
\end{center}

If we put $b_{0}:=\left(\frac{8 \left(t_0+\lambda_{\max}(D)\right) K_{\sigma^{2}} }{\rho \log2}\right)^{\frac{1}{1+2r}} $, then for any $\rho\in(0,\frac{1}{8})$, we have by $M \ge b_{0}n^{\frac{1}{1+2r}}$
\begin{equation*}
    \frac{1}{N_\kappa}\sum\limits_{j=1}^{N_\kappa} K(P_{\alpha_{\gamma^{j}},\beta_{\theta^{j}}} | P_{{\alpha}_{\gamma^{0}},\beta_{\theta^{0}}})\le 2\left(t_0+\lambda_{\max}(D)\right) n K_{\sigma^{2}} M^{-2r}\le 2 \rho \log\left(2^{\frac{M}{8}}\right) \le 2 \rho \log\left(2^{\frac{L+M}{8}}\right)\le 2 \rho\log N_\kappa.
\end{equation*}

For $\theta\in\Theta$ and and $\gamma\in\Gamma$, let the prediction rule $\eta_{\gamma,\theta}$ be $\eta_{\gamma,\theta}(\bm{X},Y):=\bm{X}^{T}\bm{\alpha}_{\gamma}+\innerproduct{Y}{\beta_{\theta}}$. For different $\theta,\theta^{\prime}\in\Theta$  and $\gamma,\gamma^{\prime}\in\Gamma$, when the true parameter is $(\bm{\alpha}_{\gamma},\beta_{\theta})$, the excess prediction risk for the prediction rule $\eta_{\gamma^{\prime},\theta^{\prime}}$ is
\begin{align*}
    \mathcal{E}(\eta_{\gamma^{\prime},\theta^{\prime}})-\mathcal{E}(\eta_{\gamma,\theta})
    &=\operatorname{E}[\innerproduct{Y}{\beta_{\theta^{\prime}}-\beta_{\theta}}+\bm{X}^{T}(\bm{\alpha}_{\gamma^{\prime}}-\bm{\alpha}_{\gamma})]^{2} \\
    &=\operatorname{E}(\innerproduct{Y}{\beta_{\theta^{\prime}}-\beta_{\theta}})^2 + 2 \operatorname{E}\left(\innerproduct{Y}{\beta_{\theta^{\prime}}-\beta_{\theta}} \cdot \bm{X}^{T}(\bm{\alpha}_{\gamma^{\prime}}-\bm{\alpha}_{\gamma})\right) + \operatorname{E}[\bm{X}^{T}(\bm{\alpha}_{\gamma^{\prime}}-\bm{\alpha}_{\gamma})]^2 \\
   [\text{By}~\operatorname{E}\bm{X}=\bm{0}~\text{and}~\bm{X}\perp~Y]~ &= M^{-1}\sum\limits_{k=M+1}^{2M}(\theta^{\prime}_{k}-\theta_{k})^{2}\tau_{k} + (\bm{\alpha}_{\gamma^{\prime}}-\bm{\alpha}_{\gamma})   ^T\operatorname{E}[\bm{X}\bm{X}^{T}](\bm{\alpha}_{\gamma^{\prime}}-\bm{\alpha}_{\gamma}) \\
    &\ge M^{-1}\tau_{2M}\sum\limits_{k=M+1}^{2M}(\theta^{\prime}_{k}-\theta_{k})^{2}+ \lambda_{\min}(D) \|\bm{\alpha}_{\gamma^{\prime}}-\bm{\alpha}_{\gamma}\|_2^{2} \\
    & = M^{-1}\tau_{2M}H(\theta^{\prime},\theta)+ L^{-2} \lambda_{\min}(D)H({\gamma^{\prime}}, \gamma) \\
    &\ge t_0 M^{-1}(2M)^{-2r}\frac{M}{8}+L^{-2}\lambda_{\min}(D)\frac{L}{8} = \frac{1}{8}(2^{-2r}t_0+\lambda_{\min}(D))M^{-2r}.
\end{align*}
Notice $M$ is the smallest integer greater than $b_{0}n^{\frac{1}{1+2r}}$, and to control the lower bound of last expression we additionally set $M\le 2b_{0}n^{\frac{1}{1+2r}}$. Thus
\begin{center}
$b_{0}n^{\frac{1}{1+2r}}\ge 1\Leftrightarrow n\ge \frac{\rho\log2}{8\left( t_0 + \lambda_{\max}(D) \right)K_{\sigma^{2}}}$.
\end{center}
 Therefore, we obtain the lower bound for $\mathcal{E}(\eta_{\gamma^{\prime},\theta^{\prime}})-\mathcal{E}(\eta_{\gamma,\theta})$
\begin{align*}
   \mathcal{E}(\eta_{\gamma^{\prime},\theta^{\prime}})-\mathcal{E}(\eta_{\gamma,\theta})&\ge 2^{-3}(2^{-2r}t_0+\lambda_{\min}(D))(2 b_{0} n^{\frac{1}{1+2r}})^{-2r} \\
   &=2^{-(2r+3)}\left( 2^{-2r}t_0 + \lambda_{\min}(D) \right)\left(\frac{8\left( t_0 + \lambda_{\max}(D) \right)K_{\sigma^{2}}}{\rho\log2}\right)^{-\frac{2r}{1+2r}}n^{-\frac{2r}{1+2r}}.
\end{align*}

\par Consider the set $\Xi:=\{(\bm{\alpha}_{\gamma},\beta_{\theta}):\gamma\in \Gamma , \theta\in\Theta\}$.
By the Lemma \ref{KLlemma}, we have
\begin{equation*}
    \inf_{\tilde{\eta}}\sup_{\eta_0\in\Xi}
    P\left(\mathcal{E}(\tilde{\eta})-\mathcal{E}(\eta_0)\ge\left(\frac{8\left( t_0 + \lambda_{\max}(D) \right)K_{\sigma^{2}}}{\rho\log2}\right)^{-\frac{2r}{1+2r}}\cdot\frac{2^{-2r}t_0 + \lambda_{\min}(D)}{2^{2r+3}n^{\frac{2r}{1+2r}}}\right)\ge \frac{\sqrt{N_\kappa }}{1+\sqrt{N_\kappa }}\left(1-4 \rho-\sqrt{\frac{4 \rho}{\log N_\kappa }}\right).
\end{equation*}

From the construction of $\bm{\alpha}_\gamma\in \mathbb{R}^p$, we see $p=L=M^{2r}\asymp n^{\frac{2r}{1+2r}}$. Note that for fixed $\tilde{\eta}$, we have
$\sup\limits_{\eta_0\in\mathbb{R}^p\times\mathcal{H}(K)}
P\left\{\mathcal{E}(\tilde{\eta})-\mathcal{E}(\eta_0)\ge\cdots\right\} \ge \sup\limits_{\eta_0\in\Xi}P\left\{\mathcal{E}(\tilde{\eta})-\mathcal{E}(\eta_0)\ge\cdots\right\}
$
and by \rm{(iiic)} we get
$$\log N_\kappa\ge \frac{M+L}{8}\log2=\left\{\left(\frac{8 \left(t_0+\lambda_{\max}(D)\right) K_{\sigma^{2}} }{\rho \log2}\right)^{\frac{1}{1+2r}}n^{\frac{1}{1+2r}} +\left[\frac{8 \left(t_0+\lambda_{\max}(D)\right) K_{\sigma^{2}} }{\rho \log2}\right]^{\frac{2r}{1+2r}}n^{\frac{2r}{1+2r}}\right\}\frac{\log2}{8}.$$

We obtain the desired conclusion.
$\hfill\square$

\subsection{Proofs of the key Lemmas}\label{sec-7}

\subsubsection{The Derivation of \eqref{2-5} and \eqref{2-6}}\label{App-A}
\hspace{4mm}Recall that
$Z_{i}=\bm{X}_{i}^{T}\bm{\alpha}_{0}+\langle Y_{i},\K f_{0} \rangle+\varepsilon_{i}$, where $\{(\bm{X}_{i},Y_{i},\varepsilon_{i})\}_{i=1}^n$ are independent copies of $(\bm{X},Y,\varepsilon)$ in \eqref{2-1}. Thus the right side of \eqref{2-4} can be written as
\begin{equation*}
    F_{n}(\bm{\alpha},f):=\frac{1}{n}\sum\limits_{i=1}^{n}\left( \bm{X}^{T}_{i}(\bm{\alpha}-\bm{\alpha}_{0})+\langle Y_{i},L^{\frac{1}{2}}_{K}(f-f_{0}) \rangle -\varepsilon_{i} \right)^{2}+\lambda_{n}\|f\|^{2}.
\end{equation*}
\par Notice $(\hatalpha{n},\hatfn)$ is the minimum of $F_{n}(\bm{\alpha},f)$, therefore $\frac{\partial F_{n}(\hatalpha{n},\hatfn)}{\partial \bm{\alpha}}=0,$ from which we have
\begin{align*}
0
&=\frac{1}{n}\sum\limits_{i=1}^{n}\bm{X}_{i}\left( \bm{X}^{T}_{i}(\hat{\bm{\alpha}}_{n}-\bm{\alpha}_{0})+\langle Y_{i},L^{\frac{1}{2}}_{K}(\hatfn-f_{0}) \rangle -\varepsilon_{i} \right)  \\
&=\left(\frac{1}{n}\sum\limits_{i=1}^{n}\bm{X}_{i}\bm{X}_{i}^{T}\right)(\hat{\bm{\alpha}}_{n}-\bm{\alpha}_{0})+\frac{1}{n}\sumn \langle Y_{i},L^{\frac{1}{2}}_{K}(\hatfn-f_{0}) \rangle \bm{X}_{i} -\frac{1}{n}\sumn\varepsilon_{i}\bm{X}_{i}:=D_{n}(\hat{\bm{\alpha}}_{n}-\bm{\alpha}_{0})+G_{n}(\hatfn-f_{0})-a_{n},
\end{align*}
thus $\hat{\bm{\alpha}}_{n}-\bm{\alpha}_{0}=-D_{n}^{-1}G_{n}(\hatfn-f_{0})+D_{n}^{-1}a_{n}$.
\par Next define the function $\varphi_{n}(t;\bm{\alpha},f,g):=F_{n}(\bm{\alpha},f+t g)$, and the fact that $(\hatalpha{n},\hatfn)$ minimizes $F_{n}(\bm{\alpha},f)$ implies
\begin{equation*}
    \frac{\mathrm{d}\varphi(t;\hatalpha{n},\hatfn,g)}{\mathrm{d}t}\bigg|_{t=0},\quad \forall g\in L^{2}(\mathcal{T}),
\end{equation*}
from which we have
\begin{align}\label{A-1}
 0
&=\frac{1}{n}\sum\limits_{i=1}^{n}\left( \bm{X}^{T}_{i}(\hatalpha{n}-\bm{\alpha}_{0})+\langle Y_{i},L^{\frac{1}{2}}_{K}(\hatfn-f_{0}) \rangle -\varepsilon_{i} \right)\langle Y_{i},\K g \rangle+\lambda_{n}\langle \hatfn,g \rangle.
\end{align}
From \eqref{A-1}, we have
\begin{equation}\label{A-2}
\frac{1}{n}\sumn\bm{X}^{T}_{i}(\hatalpha{n}-\bm{\alpha}_{0})\K Y_{i}+\frac{1}{n}\sumn\langle Y_{i},L^{\frac{1}{2}}_{K}(\hatfn-f_{0}) \rangle\K Y_{i}-\frac{1}{n}\sumn\varepsilon_{i}\K Y_{i}+\lambda_{n}\hatfn=0.
\end{equation}
Notice $ L_{C_{n}}f
    =\left\langle \frac{1}{n}\sumn Y_{i}(s)Y_{i}(t),f(t) \right\rangle
    =\frac{1}{n}\sumn \langle Y_{i},f \rangle Y_{i}$ and recall that $a_{n}:=\frac{1}{n}\sum_{i=1}^{n}\varepsilon_{i}\bm{X}_{i}$, $H_{n}(\bm{\alpha}):=\frac{1}{n}\sum\limits_{i=1}^{n}(\bm{X}_{i}^{T}\bm{\alpha}) \K Y_{i}$ and $T_{n}=\K\circ L_{C_{n}}\circ\K$. Thus \eqref{A-2} can be reformulated as
\begin{equation*}
    (T_{n}+\lambda_{n}I)\hatfn-T_{n}f_{0}+H_{n}(\hatalpha{n}-\bm{\alpha}_{0})-g_{n}=0,
\end{equation*}
which yields
\begin{align*}
\hatfn-f_{0}
&=(T_{n}+\lambda_{n}I)^{-1}T_{n}f_{0}-f_{0}-(T_{n}+\lambda_{n}I)^{-1}H_{n}(\hatalpha{n}-\bm{\alpha}_{0})+(T_{n}+\lambda_{n}I)^{-1}g_{n}\\
&=-\lambda_{n}(T_{n}+\lambda_{n}I)^{-1}f_{0}-(T_{n}+\lambda_{n}I)^{-1}H_{n}(\hatalpha{n}-\bm{\alpha}_{0})+(T_{n}+\lambda_{n}I)^{-1}g_{n}.
\end{align*} $\hfill\square$

\subsubsection{Proof of Lemma \ref{3-2}}
\hspace{4mm}We first prove $H_{n}=G_{n}^{*}$, which shows $\|G_{n}\|_{\operatorname{op}}=\|H_{n}\|_{\operatorname{op}}$.
For any $\bm{\gamma}\in \mathbb{R}^{p}$ and $f\in L^{2}(\mathcal{T})$, one has
\begin{equation*}
    \bm{\gamma}^{T}G_{n}(f)
    =\frac{1}{n}\sumn\innerproduct{Y_{i}}{\K f}(\bm{\gamma}^{T}\bm{X}_{i})=\frac{1}{n}\sumn(\bm{X}_{i}^{T}\bm{\gamma})\innerproduct{\K Y_{i}}{f}=\innerproduct{H_{n}(\bm{\gamma})}{f}.
\end{equation*}
\par Now we turn to bound $\|G_{n}\|_{\operatorname{op}}$, for $1\le j\le p$, define the operator $G_{n,j}:L^{2}(\mathcal{T})\mapsto\mathbb{R}$ by
\begin{equation*}
    G_{n,j}(f):=\frac{1}{n}\sumn\innerproduct{Y_{i}}{\K f}X_{i,j},
\end{equation*}
which can also be viewed as a random variable taking values in a Hilbert space $L^{2}(\mathcal{T})^{*}=L^{2}(\mathcal{T})$.
\par Notice $G_{n}=(G_{n,1},\cdots,G_{n,p})$, thus we have
\begin{equation*}
    \|G_{n}(f)\|^{2}
    =\sum\limits_{j=1}^{p}|G_{n,j}(f)|^{2}
    \le \sum\limits_{j=1}^{p} \|G_{n,j}\|_{\operatorname{op}}^{2}\|f\|^{2}
    \le \left( \sum\limits_{j=1}^{p} \|G_{n,j}\|_{\operatorname{op}} \right)^{2}\| f \|^{2},
\end{equation*}
which means $\opnorm{G_{n}}\le \sum\limits_{j=1}^{p}\opnorm{G_{n,j}}$. Define the operator $\xi_{i,j}:L^{2}(\mathcal{T})\mapsto\mathbb{R}$ and $\xi_{j}:L^{2}(\mathcal{T})\mapsto\mathbb{R}$ by
\begin{equation*}
    \xi_{i,j}(f):=\innerproduct{Y_{i}}{\K f}X_{i,j}
    \quad\text{and}\quad
    \xi_{j}(f):=\innerproduct{Y}{\K f}X_{j},
\end{equation*}
from which we rewrite $G_{n,j}=\frac{1}{n}\sumn \xi_{i,j}$, where $\{\xi_{i,j}\}_{i=1}^{n}$ are independent copies of $\xi_{j}$.

{{By the definition of $\xi_{j}$, after noticing the isomorphism between $L^{2}(\mathcal{T})^{*}$ and $L^{2}(\mathcal{T})$, we have
\begin{equation*}
    \|\xi_{j}\|_{\operatorname{op}} = \|X_{j}\K Y\| \le |X_{j}|\cdot\opnorm{\K}\cdot\|Y\| \le \kappa M_2|X_{j}|
\end{equation*}
by $\opnorm{\K} = \kappa$ in Assumption \ref{ass-0} and Assumption \ref{ass-2}. After taking expectation and using the sub-Gaussian growth of moments condition for $X_{j}$, we have
\begin{equation*}
{\left\| \|\xi_{j}\|_{\operatorname{op}} \right\|_{G}}=\sup_{k \ge 1} \left[ {\frac{{\rm{E}}{\|\xi_{j}\|_{\operatorname{op}}^{2k}}}{(2 k-1) ! !}} \right]^{1/(2k)}   \le  \kappa M_2\sup_{k \ge 1} \left[ {\frac{{\rm{E}}{|X_{j}|^{2k}}}{(2 k-1) ! !}} \right]^{1/(2k)}= \kappa M_2{\left\| |X_{j}| \right\|_{G}}\le  \kappa M_2M_1.
\end{equation*}
Because $\bm{X}$ and $Y$ are independent with zero mean, we have
\begin{center}
$(\operatorname{E}\xi_{j})(f)=
    \innerproduct{\operatorname{E}Y_{i}}{\K f}\cdot\operatorname{E}X_{j}=0,$ which means $\operatorname{E}\xi_{j}=0$.
\end{center}
\par By using Corollary \ref{Lip.Class.lem}, we have
for any $\delta_{2}\in(0,1)$, with probability at least $1-\frac{\delta_{2}}{p}$,
\begin{align*}
    \opnorm{G_{n,j}}
   =\opnorm{\frac{1}{n}\sumn\xi_{i,j}}&\le \frac{1}{\sqrt n}\left\{\sqrt{\frac{1}{n} \sum\limits_{i=1}^n{\E \| \xi_{i,j}\|_{\operatorname{op}}^2}}+8\sqrt{\frac{1}{n}\sum\limits_{i=1}^n \| \| \xi_{i,j}\|_{\operatorname{op}}\|_{G}^2 \log (\frac{p}{\delta_{2}})}\right\}\\
[\text{By Assumptions \ref{ass-1}}]~     &\le \frac{\kappa M_2[v+8M_1{\log^{1/2} ({p}/{\delta_{2}})}]}{\sqrt n}.
\end{align*}

\par Notice $\opnorm{G_{n}}\le\sum\limits_{j=1}^{p} \opnorm{G_{n,j}}$, thus we have the following relation for the events
\begin{equation*}
    \left\{ \opnorm{G_{n}} \ge  \frac{\kappa M_2[v+8M_1{\log^{1/2} ({p}/{\delta_{2}})}]}{\sqrt n} \right\} \subset \bigcup_{1\le j\le p}
    \left\{  \opnorm{G_{n,j}} \ge \frac{\kappa M_2[v+8M_1{\log^{1/2} ({p}/{\delta_{2}})}]}{\sqrt n} \right\},
\end{equation*}
from which we have
\begin{align*}
    P\left(\opnorm{G_{n}} \ge  \frac{\kappa M_2[v+8M_1{\log^{1/2} ({p}/{\delta_{2}})}]}{\sqrt n}\right) & \le \sum\limits_{j=1}^{p}P\left(\opnorm{G_{n,j}} \ge \frac{\kappa M_2[v+8M_1{\log^{1/2} ({p}/{\delta_{2}})}]}{\sqrt n} \right)\le\sum\limits_{j=1}^{p}\frac{\delta_{2}}{p}=\delta_{2}.
\end{align*}
\par Therefore, $P\left(\opnorm{G_{n}}\le \frac{\kappa M_2[v+8M_1{\log^{1/2} ({p}/{\delta_{2}})}]}{\sqrt n}\right)\ge 1-\delta_{2}.$
}}
\subsubsection{Lemma \ref{3-5}}
\hspace{4mm}The Lemma \ref{3-5} shows the fact $\|a_{n}\|=O_{p}(n^{-\frac{1}{2}})$, and its proof is based on the Markov's inequality.
\begin{lemma}\label{3-5}
Under the Assumptions \ref{ass-1} and \ref{ass-3}, for any $\delta_{4}\in(0,1)$, with probability at least $1-\delta_{4}$, we have
\begin{equation*}
    \|a_{n}\|\le \frac{c_{3}}{\sqrt{\delta_{4}}\sqrt{n}}
    \quad\text{with}\quad c_{3}:=\sqrt{p}\sigma v.
\end{equation*}
\end{lemma}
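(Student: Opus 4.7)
The plan is to apply Markov's inequality to the squared norm $\|a_n\|^2$, since the statement is essentially a second-moment bound of the kind routinely obtained by Chebyshev-type arguments.

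First I would compute (or bound) $\operatorname{E}\|a_n\|^2$. Writing $a_n = \frac{1}{n}\sum_{i=1}^n \varepsilon_i \bm{X}_i$, using independence of the pairs $(\varepsilon_i, \bm{X}_i)$ across $i$, the mutual independence of $\varepsilon_i$ and $\bm{X}_i$, and $\operatorname{E}\varepsilon_i = 0$, the cross terms vanish and one gets
\begin{equation*}
\operatorname{E}\|a_n\|^2 = \frac{1}{n^2}\sum_{i=1}^n \operatorname{E}(\varepsilon_i^2)\operatorname{E}\|\bm{X}_i\|^2 = \frac{\sigma^2}{n}\operatorname{E}\|\bm{X}\|^2,
\end{equation*}
where I used Assumption \ref{ass-3} to replace $\operatorname{E}\varepsilon_i^2$ by $\sigma^2$.

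Next I would bound $\operatorname{E}\|\bm{X}\|^2 = \sum_{j=1}^p \operatorname{E}(X_j^2)$ using Assumption \ref{ass-1} with $l=2$, which gives $\operatorname{E}(X_j^2) \le \frac{M_1^2}{2}\cdot \upsilon^0 \cdot 2! = M_1^2$ for each $j$, hence $\operatorname{E}\|\bm{X}\|^2 \le p M_1^2$. Combining, $\operatorname{E}\|a_n\|^2 \le \frac{p \sigma^2 M_1^2}{n} = \frac{c_3^2}{n}$.

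Finally, Markov's inequality applied to the nonnegative random variable $\|a_n\|^2$ yields, for any $t>0$, $P(\|a_n\|^2 \ge t) \le \frac{c_3^2}{n t}$. Choosing $t = \frac{c_3^2}{n\delta_4}$ gives $P(\|a_n\| \ge c_3/(\sqrt{\delta_4 n})) \le \delta_4$, which is the claimed bound. There is no real obstacle here: the proof is a short moment calculation plus Markov, and the only point worth being careful about is correctly invoking the independence of $\varepsilon_i$ from $\bm{X}_i$ (built into the model \eqref{2-1}) so that the cross moments factorize and the zero-mean assumption on $\varepsilon$ kills the cross-sample terms.
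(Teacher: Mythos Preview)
Your proposal is correct and follows essentially the same approach as the paper's proof: compute $\operatorname{E}\|a_n\|^2$ by exploiting independence to kill the cross-sample terms, bound $\operatorname{E}\|\bm{X}\|^2\le pM_1^2$ via Assumption~\ref{ass-1} with $l=2$, and conclude by Markov's inequality. The only cosmetic difference is that the paper applies Markov directly to $\|a_n\|$ (with the second moment), while you apply it to $\|a_n\|^2$; these are equivalent.
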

\begin{proof}
Define $\eta:=\varepsilon \bm{X}$ and $\eta_{i}:=\varepsilon_{i}\bm{X}_{i}\,(1\le i\le n)$, by which we rewrite $a_{n}=\frac{1}{n}\sumn \xi_{i}$.
\par {Notice for $i\neq j$, we have $\operatorname{E}(\eta_{i}^{T}\eta_{j})=\operatorname{E}(\varepsilon_{i}\varepsilon_{j}\bm{X}_{i}^{T}\bm{X}_{j})=\operatorname{E}(\varepsilon_j \bm{X}^T_i \bm{X}_j\operatorname{E}(\varepsilon_i|\bm{X}_i))=0$. And
\begin{equation*}
    \operatorname{E}(\| \eta\|^{2})=\operatorname{E}(\varepsilon^{2}\|\bm{X}\|^{2})=\operatorname{E}(\|\bm{X}\|^2\operatorname{E(\varepsilon^2|\bm{X})}) \le \sigma^{2}\sum\limits_{j=1}^{p}\operatorname{E}|X_{j}|^{2}\le p\sigma^{2}v^{2},
\end{equation*}}
where we use the second moment condition of $\bm{X}$ in Assumption \ref{ass-1}.
Therefore, we have
\begin{equation*}
    \operatorname{E}\left(\|a_{n}\|^{2}\right)=\operatorname{E}\left(\left\|\frac{1}{n}\sumn\eta_{i}\right\|^{2}\right)=\frac{\operatorname{E}\left(\|\eta_1\|^{2}\right)}{n}\le \frac{p\sigma^{2}v^{2}}{n},
\end{equation*}
from which we obtain the Markov's inequality for $a_{n}$: $P\left(\|a_{n}\|\ge t\right)\le \frac{p\sigma^{2}v^{2}}{n t^{2}}.$ Therefore, we can conclude for $\delta_{4}\in(0,1)$, we have with probability at least $1-\delta_{4}$
\begin{equation*}
    \|a_{n}\|\le \frac{c_{3}}{\sqrt{\delta_{4}}\sqrt{n}} \quad\text{with}\quad c_{3}:= \sqrt{p}\sigma v.
\end{equation*}
\end{proof}

\subsubsection{Lemma \ref{3-6}}
\hspace{4mm}The Lemma \ref{3-6} is the concentration inequality for the empirical covariance matrix $D_{n}$.

\begin{lemma}\label{3-6} Under the Assumption \ref{ass-1}, for $t>0$, {we have
\begin{equation*}
    P(\|D_{n}-D\|_{\infty}\ge t)\le 2p^{2}\exp\left( -\frac{n t^{2}}{8(64 M_{1}^{4}+M_1^2 t)} \right).
\end{equation*}}
\end{lemma}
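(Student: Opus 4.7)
The target inequality has the shape of an entrywise Bernstein bound followed by a union bound over the $p^{2}$ entries of $D_n - D$, so I would organize the argument in exactly that order and isolate as the main technical step the verification of a Bernstein-type moment condition for the centered product $X_i X_j - \operatorname{E}(X_i X_j)$.

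\textbf{Step 1 (entrywise decomposition).} For each $1 \le i, j \le p$, the $(i,j)$-entry of $D_n - D$ is
\begin{equation*}
    (D_n - D)_{ij} \;=\; \frac{1}{n}\sum_{k=1}^{n}\bigl(X_{k,i}X_{k,j} - \operatorname{E}(X_i X_j)\bigr),
\end{equation*}
which is a sample mean of $n$ i.i.d.\ mean-zero random variables $Z_{ij}^{(k)} := X_{k,i}X_{k,j} - \operatorname{E}(X_i X_j)$.

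\textbf{Step 2 (Bernstein-type moment condition for $Z_{ij}$).} I aim to establish, for every integer $l\ge 2$, an inequality of the form $\operatorname{E}|Z_{ij}|^{l} \le \tfrac{1}{2}\sigma_{*}^{2}L_{*}^{l-2}l!$ with constants $\sigma_{*}^{2}$ and $L_{*}$ to be chosen. Starting from $|Z_{ij}|^{l} \le 2^{l-1}(|X_i X_j|^{l}+|\operatorname{E}(X_iX_j)|^{l})$, applying the AM--GM bound $|X_iX_j|^{l}\le \tfrac12(|X_i|^{2l}+|X_j|^{2l})$, and invoking the growth-of-moments condition from Assumption \ref{ass-1} in the form $\operatorname{E}|X_j|^{2l}\le \tfrac{M_{1}^{2}}{2}\upsilon^{2l-2}(2l)!$, I reduce matters to bookkeeping on $(2l)!$. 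Comparing the advertised denominator $16\upsilon^{2}(16M_{1}^{2}+t)=2(128\upsilon^{2}M_{1}^{2}+8\upsilon^{2}t)$ with the Bernstein template $2(\sigma_{*}^{2}+L_{*}t)$ pins the target constants to $\sigma_{*}^{2}=128\upsilon^{2}M_{1}^{2}$ and $L_{*}=8\upsilon^{2}$, which is the choice I would aim for when tuning the factorial estimates.

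\textbf{Step 3 (Bernstein inequality and union bound).} Once Step 2 is in place, the standard Bernstein inequality for i.i.d.\ sums satisfying a Bernstein moment condition yields
\begin{equation*}
    P\bigl(|(D_n-D)_{ij}|\ge t\bigr) \;\le\; 2\exp\!\left(-\frac{nt^{2}}{2(\sigma_{*}^{2}+L_{*}t)}\right) \;=\; 2\exp\!\left(-\frac{nt^{2}}{16\upsilon^{2}(16M_{1}^{2}+t)}\right).
\end{equation*}
Since $\{\|D_n-D\|_{\infty}\ge t\}\subset \bigcup_{i,j}\{|(D_n-D)_{ij}|\ge t\}$, a union bound over the $p^{2}$ index pairs multiplies the right-hand side by $p^{2}$ and gives the statement.

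\textbf{Expected obstacle.} The technical heart is Step 2. Because the coordinates of $\bm{X}$ need not be independent, one is forced to use AM--GM or Cauchy--Schwarz, after which the naive bound carries $(2l)!$ rather than $l!$. Since $(2l)!/l!$ grows faster than any $C^{l}$, one cannot simply absorb the difference into a geometric base, and one must combine $(2l)! \le 4^{l}(l!)^{2}$ with careful redistribution of factors between the variance parameter $\sigma_{*}^{2}$ and the scale parameter $L_{*}$ so that the resulting constants match $128\upsilon^{2}M_{1}^{2}$ and $8\upsilon^{2}$ cleanly. Once this calibration is carried out, Steps 1 and 3 are essentially automatic.
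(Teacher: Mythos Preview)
Your three-step outline---entrywise decomposition, Bernstein moment condition for the centered products, union bound over the $p^{2}$ entries---is exactly the route the paper takes; Steps~1 and~3 match the paper's argument essentially verbatim, and in Step~2 the paper uses Cauchy--Schwarz where you propose AM--GM, but either choice leads to the same bound $\operatorname{E}|X_{i}X_{j}|^{l}\le \tfrac{M_{1}^{2}}{2}\upsilon^{2l-2}(2l)!$.

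The genuine gap is in Step~2, and your ``expected obstacle'' paragraph already locates it without resolving it. You propose to combine $(2l)!\le 4^{l}(l!)^{2}$ with a ``careful redistribution of factors between $\sigma_{*}^{2}$ and $L_{*}$,'' but those parameters are \emph{constants independent of $l$}, so they cannot absorb the residual factor of $l!$: there is no choice of $\sigma_{*}^{2},L_{*}$ making $\tfrac{1}{2}\sigma_{*}^{2}L_{*}^{l-2}l!$ dominate a quantity of order $(4\upsilon^{2})^{l}(l!)^{2}$ for all $l\ge 2$. Under Assumption~\ref{ass-1} the coordinates $X_{j}$ are only sub-exponential, and a product of sub-exponential variables is in general merely sub-Weibull of order $1/2$ (take $X_{j}$ standard exponential: then $\operatorname{E}|X_{j}^{2}|^{l}=(2l)!$, which violates every Bernstein moment condition). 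The paper's own proof handles this step by invoking the inequality $(2l)!\le 4^{l}l!$, which is \emph{false} for $l\ge 3$ (e.g.\ $6!=720>384=4^{3}\cdot 3!$), so the published argument has precisely the defect you are trying to work around. As stated, the bound does not follow from Assumption~\ref{ass-1} via this route; one would need either a sub-Gaussian strengthening of the moment assumption on $\bm{X}$ or a concentration inequality adapted to heavier-tailed summands, which would change the form of the exponent.
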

\begin{proof}
For $1\le j,k \le p$, put
\begin{equation*}
    d_{j k}:=(D)_{j k}
    \quad\text{and}\quad
    d^{n}_{j k}:=(D_{n})_{j k}=\frac{1}{n}\sumn X_{i,j}X_{i,k}=\frac{1}{n}\sumn d^{n}_{j k,i} \quad \text{with}\quad d^{n}_{j k,i}:=X_{i,j}X_{i,k}.
\end{equation*}
Let $e^{n}_{j k,i}$ be the centralization of $d^{n}_{j k,i}$, i.e. $e^{n}_{j k,i}:=d^{n}_{j k,i}-\operatorname{E}d^{n}_{j k,i}=d^{n}_{j k,i}-d_{j k}.$ Thus we have
\begin{equation*}
    \left\{ \|D_{n}-D\|_{\infty}\ge t \right\}=\bigcup\limits_{1\le j,k\le p}\{ |d^{n}_{j k}-d_{j k}|\ge t \}=\bigcup\limits_{1\le j,k\le p} \left\{ \left|\frac{1}{n}\sumn e^{n}_{j k,i}\right| \ge t \right\} .
\end{equation*}
{By the Cauchy-Schwarz inequality and $\mathrm{E}{Y^{2k}} \le \frac{{(2k)!}}{{{2^k}k!}}\left\| Y\right\|_{G}^{2k}$ for $k\ge 1$, we have
\begin{align*}
    \operatorname{E}(|d^{n}_{j k,i}|^{l})
    =\operatorname{E}(|X_{i,j}|^{l}|X_{i,k}|^{l})& \le (\operatorname{E}|X_{i ,j}|^{2l})^{\frac{1}{2}}(\operatorname{E}|X_{i ,k}|^{2l})^{\frac{1}{2}}=(\operatorname{E}|X_{j}|^{2l})^{\frac{1}{2}}(\operatorname{E}|X_{k}|^{2l})^{\frac{1}{2}}\le  \frac{{(2l)!}}{{{2^l}l!}}\left(\| X_{j}\|_{G}^{2l}\| X_{k}\|_{G}^{2l}\right)^{1/2}\\
[\text{By Assumption \ref{ass-1}a}]~   & \le \frac{{(2l)!}}{{{2^l}(l!)^2}} M_{1}^{2l}l!\le \frac{({2^l}l!)^2}{{{2^l}(l!)^2}} M_1^{2l}l!\\
&=2^lM_{1}^{2l}l!=\frac{8M_1^4}{2}(2M_1^2)^{l-2}\cdot l!,
\end{align*}
where we use the growth of sub-Gaussian moments condition of $\{X_{j}\}_{j=1}^p$ in the second last inequality, and the last inequality stems from:
\begin{align*}
(2k)!=(2k)(2k - 1)(2k - 2) \cdots 4 \cdot 3 \cdot 2 \cdot 1 &= {2^k} \cdot k!(2k - 1)(2k - 3) \cdots 3 \cdot 1\\
& \le {2^k} \cdot k!(2k)(2k - 2) \cdots 4 \cdot 2 = {({2^k}k!)^2}~\text{for}~k\ge1.
\end{align*}

Using $|a-b|^{l}\le 2^{l}(|a|^{l}+|b|^{l})$, we have
\begin{equation*}
    |e^{n}_{j k,i}|^{l}= |d^{n}_{j k,i}-\operatorname{E}d^{n}_{j k,i}|^{l} \le 2^{l}(|d^{n}_{j k,i}|^{l}+|\operatorname{E}d^{n}_{j k,i}|^{l}).
\end{equation*}
Take expectation and use the Jensen's inequality $|\operatorname{E}d^{n}_{j k,i}|^{l}\le \operatorname{E} |d^{n}_{j k,i}|^{l}  $, we have
\begin{equation}\label{eq:grow}
    \operatorname{E}(|e^{n}_{j k,i}|^{l})
    \le 2^{l+1}\operatorname{E} |d^{n}_{j k,i}|^{l}
    \le 2^{l+1} \frac{8M_1^4}{2}(2M_1^2)^{l-2}\cdot l!
    = \frac{64M_1^4}{2}(4M_1^2)^{l-2}\cdot l!.
\end{equation}
For the independent random variables $\{e^{n}_{j k,i}\}_{i=1}^{n}$, by the Bernstein's inequality with the growth of moments condition \eqref{eq:grow} (see Corollary 4.6 in \cite{zhang2020concentration}), we have
\begin{equation*}
    P\left(\left|\frac{1}{n}\sumn e^{n}_{j k,i}\right|\ge t\right)\le
    2\exp\left( -\frac{n^{2}t^{2}}{128M_1^4 n+8M_1^2n t} \right)
    =2\exp\left( -\frac{n t^{2}}{8(16 M_{1}^{4}+M_1^2 t)} \right).
\end{equation*}
Thus we conclude
\begin{equation*}
    P(\|D_{n}-D\|_{\infty}\ge t)\le \sum\limits_{1\le j,k\le p} P\left(\left|\frac{1}{n}\sumn e^{n}_{j k,i}\right|\ge t\right)
    \le 2p^{2}\exp\left( -\frac{n t^{2}}{8(16 M_{1}^{4}+M_1^2 t)} \right).
\end{equation*}
}
\end{proof}

\subsubsection{Lemma \ref{3-7}}
\hspace{4mm} The Lemma \ref{3-7} shows we can use $\frac{3}{2}\opnorm{D^{-1}}$ to bound $\opnorm{D_{n}^{-1}}$ from above.
\begin{lemma}\label{3-7}
Under the Assumption \ref{ass-1}, for any $\delta_{5}\in(0,1)$, {let
\begin{equation}\label{n1}
    N_{1}:=24p\opnorm{D^{-1}}(48p\opnorm{D^{-1}}M_1^4+M_1^2)\log\left(\frac{2p^{2}}{\delta_{5}}\right),
\end{equation}
we have when $n>N_{1}$
\begin{equation*}
    P\left(\opnorm{D_{n}^{-1}}\le \frac{3}{2}\opnorm{D^{-1}}\right)\ge 1-\delta_{5}.
\end{equation*}
}
\end{lemma}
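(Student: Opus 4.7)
The plan is to combine the concentration inequality for $\|D_n - D\|_{\infty}$ established in Lemma \ref{3-6} with a standard perturbation argument for the inverse operator. The key observation is that if we can force $\|D_n - D\|_{\mathrm{op}}$ to be significantly smaller than $1/\|D^{-1}\|_{\mathrm{op}}$ with high probability, then a Neumann-series expansion will produce the desired $\tfrac{3}{2}$ factor.

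More concretely, I would write $D_n = D(I + D^{-1}(D_n - D))$, so that $D_n^{-1} = (I + D^{-1}(D_n - D))^{-1} D^{-1}$. Provided $\|D^{-1}(D_n - D)\|_{\mathrm{op}} \le \tfrac{1}{3}$, the Neumann series yields
\begin{equation*}
\|D_n^{-1}\|_{\mathrm{op}} \le \frac{\|D^{-1}\|_{\mathrm{op}}}{1-\|D^{-1}(D_n-D)\|_{\mathrm{op}}} \le \frac{3}{2}\|D^{-1}\|_{\mathrm{op}}.
\end{equation*}
So the task reduces to showing $\|D^{-1}(D_n-D)\|_{\mathrm{op}} \le 1/3$ with probability at least $1-\delta_5$, and by submultiplicativity together with $\|E\|_{\mathrm{op}} \le p\|E\|_{\infty}$ (quoted from the Preliminaries), it suffices to bound $\|D_n - D\|_{\infty} \le 1/(3p\|D^{-1}\|_{\mathrm{op}})$.

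At this point I apply Lemma \ref{3-6} with the threshold $t = 1/(3p\|D^{-1}\|_{\mathrm{op}})$. Setting the resulting tail probability at most $\delta_5$ gives the condition
\begin{equation*}
n \;\ge\; \frac{16\upsilon^{2}(16M_{1}^{2}+t)}{t^{2}}\log\!\Bigl(\frac{2p^{2}}{\delta_{5}}\Bigr),
\end{equation*}
and substituting $1/t = 3p\|D^{-1}\|_{\mathrm{op}}$ produces exactly $N_1$ as defined in \eqref{n1}, since $16\upsilon^{2}\cdot 16M_{1}^{2}/t^{2} = 2304\,\upsilon^{2}M_{1}^{2}p^{2}\|D^{-1}\|_{\mathrm{op}}^{2}$ and $16\upsilon^{2}/t = 48\,\upsilon^{2}p\|D^{-1}\|_{\mathrm{op}}$, which factor as $48\upsilon^{2}p\|D^{-1}\|_{\mathrm{op}}(48p\|D^{-1}\|_{\mathrm{op}}M_{1}^{2}+1)$.

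The calculation is essentially routine given Lemma \ref{3-6}; the only subtle point is choosing the slack $\tfrac{1}{3}$ (rather than $\tfrac{1}{2}$) so that the Neumann bound delivers the target constant $\tfrac{3}{2}$ and not a larger factor. No other nontrivial obstacle appears, since everything follows from bookkeeping on the exponent in the Bernstein tail and the elementary identity $\|E\|_{\mathrm{op}}\le p\|E\|_{\infty}$ referenced in the Preliminaries.
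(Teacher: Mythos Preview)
Your proposal is correct and follows essentially the same route as the paper: both reduce to the event $\|D^{-1}\|_{\mathrm{op}}\|D_n-D\|_{\mathrm{op}}\le\tfrac{1}{3}$, pass from $\|\cdot\|_{\mathrm{op}}$ to $p\|\cdot\|_{\infty}$, and then apply Lemma~\ref{3-6} with $t=1/(3p\|D^{-1}\|_{\mathrm{op}})$, yielding exactly the threshold $N_1$. The only cosmetic difference is that the paper invokes a cited perturbation inequality for $\|A^{-1}-B^{-1}\|_{\mathrm{op}}$ in place of your direct Neumann-series bound, but the two arguments are equivalent.
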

\begin{proof}
Notice the fact that if $A,B\in\mathbb{R}^{p\times p}$ are invertible and $\opnorm{A^{-1}}\opnorm{A-B}<1$, then
\begin{equation*}
    \opnorm{A^{-1}-B^{-1}}\le \frac{\opnorm{A^{-1}}^{2}\opnorm{A-B}}{1-\opnorm{A^{-1}}\opnorm{A-B}}.
\end{equation*}
(see Lemma E.4 in \cite{Sun2017GraphicalNO}). Let $A=D$ and $B=D_{n}$, when $\opnorm{D^{-1}}\opnorm{D-D_{n}}\le\frac{1}{3}$, we have
\begin{align*}
    \opnorm{D^{-1}-D_{n}^{-1}}
    \le \frac{\frac{1}{3}\opnorm{D^{-1}}}{1-\frac{1}{3}}=\frac{1}{2}\opnorm{D^{-1}},
\end{align*}
from which we have $\opnorm{D_{n}^{-1}}\le \opnorm{D^{-1}}+\opnorm{D^{-1}-D_{n}^{-1}}\le \frac{3}{2}\opnorm{D^{-1}}.$ Therefore, it gives
\begin{equation*}
    P\left(\opnorm{D_{n}^{-1}}\le \frac{3}{2}\opnorm{D^{-1}}\right)\ge P\left(\opnorm{D^{-1}}\opnorm{D-D_{n}}\le\frac{1}{3}\right).
\end{equation*}
\par {Recall that we have $\opnorm{A}\le p\|A\|_{\infty}$ for $A\in \mathbb{R}^{p\times p}$, by the Lemma \ref{3-6} with $t=1/(3p\opnorm{D^{-1}})$, we have
\begin{align*}
    P\left(\opnorm{D_{n}^{-1}}\ge \frac{3}{2}\opnorm{D^{-1}}\right)
    &\le P\left(\opnorm{D^{-1}}\opnorm{D-D_{n}}\ge\frac{1}{3}\right) \le P\left(p\opnorm{D^{-1}}\| D-D_{n} \|_{\infty}\ge \frac{1}{3}\right)\\
    &\le 2p^{2}\exp \left( -\frac{ n }{ 24p\opnorm{D^{-1}}(48 p\opnorm{D^{-1}}M_1^4+M_1^2) } \right).
\end{align*}}
Let $N_1$ be defined in \eqref{n1}, as $n>N_{1}$, we get $P\left(\opnorm{D_{n}^{-1}}\ge \frac{3}{2}\opnorm{D^{-1}}\right)\le\delta_{5}$.
\end{proof}

\section{The auxiliary lemmas and results}\label{sec-8}
\hspace{4mm}The Lemma \ref{3-1}, Lemma \ref{3-4}, Inequalities \eqref{ineq-1} and \eqref{ineq-2} are from the proof of \cite{tong2018analysis}. We provide all the complete proofs in this section for integrity.

\subsection{Proof of Lemma \ref{3-1}}
\hspace{4mm}Define the random variable
\begin{equation*}
    \xi(f):=(T+\lambda_{n}I)^{-\frac{1}{2}}\langle \K Y,f \rangle\K Y
    \quad\text{and}\quad
    \xi_{i}(f):=(T+\lambda_{n}I)^{-\frac{1}{2}}\langle \K Y_{i},f \rangle\K Y_{i}\quad(1\le i\le n).
\end{equation*}
Then $\xi_{i}$ are independent copies of $\xi$, which takes values in $\mathrm{HS}(\mathcal{T})$.

Recall we define $\{(\tau_{k},\varphi_{k})\}_{k\ge1}$ to be the set of eigenvalue-eigenfunction pairs of the operator $T$,
\begin{align*}
    \|\xi\|^{2}_{\operatorname{HS}}
    &=\suminf\|(T+\lambda_{n}I)^{-\frac{1}{2}}\langle \K Y,\varphi_{k} \rangle\K Y\|^{2}\le \|(T+\lambda_{n}I)^{-\frac{1}{2}}\|_{\operatorname{op}}^{2}\|\K Y\|^{2}\suminf |\langle \K Y,\varphi_{k} \rangle|^{2}\\
    &= \|(T+\lambda_{n}I)^{-\frac{1}{2}}\|_{\operatorname{op}}^{2}\|\K Y\|^{4}\le \frac{\kappa^{4}M_{2}^{4}}{\lambda_{n}},
\end{align*}
where we use the fact
$\suminf |\langle \K Y,\varphi_{k} \rangle|^{2}=\|\K Y\|^{2}$, $\|(T+\lambda_{n}I)^{-\frac{1}{2}}\|_{\operatorname{op}}\le \frac{1}{\sqrt{\lambda_{n}}}~\text{and}~\|\K Y\|\le \|\K\|_{\operatorname{op}}\|Y\|\le\kappa M_{2}.$

 Notice $\K Y$ can be expended to $\sum\limits_{l=1}^{+\infty} \innerproduct{\K Y}{\varphi_{l}} \varphi_{l}$, we have
\begin{align*}
    &~~~~\|\xi\|_{\operatorname{HS}}^{2}
    =\suminf\left\|\sum\limits_{l=1}^{+\infty}\innerproduct{\K Y}{\varphi_{k}}\innerproduct{\K Y}{\varphi_{l}}(T+\lambda_{n}I)^{-\frac{1}{2}}\varphi_{l}\right\|^{2}=\left(\suminf|\innerproduct{\K Y}{\varphi_{k}}|^{2}
    \right)\left\|\sum\limits_{l=1}^{+\infty}\innerproduct{\K Y}{\varphi_{l}}(T+\lambda_{n}I)^{-\frac{1}{2}}\varphi_{l}\right\|^{2}\\
    &=\|\K Y\|^{2} \left\|\sum\limits_{l=1}^{+\infty} \frac{1}{\sqrt{\tau_{l}+\lambda_{n}}}\innerproduct{\K Y}{\varphi_{l}}\varphi_{l} \right\|^{2}=\|\K Y\|^{2} \sum\limits_{l=1}^{+\infty}\frac{1}{\tau_{l}+\lambda_{n}}|\innerproduct{\K Y}{\varphi_{l}}|^{2}
    \le \kappa^{2}M_{2}^{2} \sum\limits_{l=1}^{+\infty}\frac{1}{\tau_{l}+\lambda_{n}}|\innerproduct{\K Y}{\varphi_{l}}|^{2}.
\end{align*}
Using \eqref{2-7} we have
\begin{equation*}
    \operatorname{E}[|\innerproduct{\K Y}{\varphi_{l}}|^{2}]
    =\operatorname{E}[|\innerproduct{Y}{\K \varphi_{l}}|^{2}]
    =\innerproduct{\K \varphi_{l}}{L_{C}\K \varphi_{l}}
    =\innerproduct{T\varphi_{l}}{\varphi_{l}}=\tau_{l},
\end{equation*}
from which we get
\begin{equation*}
    \operatorname{E}(\| \xi \|_{\operatorname{HS}}^{2})\le \kappa^{2}M_{2}^{2}\sum\limits_{l=1}^{+\infty}\frac{\tau_{l}}{\tau_{l}+\lambda_{n}}=\kappa^{2}M_{2}^{2}D(\lambda_{n}).
\end{equation*}
Notice
\begin{align*}
    \operatorname{E}(\innerproduct{\K Y}{f}\innerproduct{\K Y}{g})
    &=\operatorname{E}(\innerproduct{Y}{\K f}\innerproduct{Y}{\K g})=\operatorname{E}\iint_{\mathcal{T}\times\mathcal{T}}Y(s)Y(t)(\K f)(s)(\K g)(t)\mathrm{d}s\mathrm{d}t\\
    &=\int_{\mathcal{T}}\left( \int_{\mathcal{T}} C(s,t)(\K f)(s) \mathrm{d}s\right)(\K g)(t)\mathrm{d}t=\innerproduct{L_{C}\K f}{\K g}=\innerproduct{T f}{g},
\end{align*}
from which we have
$
    \operatorname{E}(\innerproduct{\K Y}{f}\K Y)=T(f).
$
Therefore $(\operatorname{E}\xi)(f)=(T+\lambda_{n}I)^{-\frac{1}{2}}T(f).$
\par Taking $\|\xi\|_{\operatorname{H}}\le \frac{\kappa^{2}M_{2}^{2}}{\sqrt{\lambda_{n}}}$ and $\operatorname{E}(\|\xi\|_{\operatorname{H}}^{2})\le \kappa^{2}M_{2}^{2}D(\lambda_{n})$ in the Lemma \ref{3-3}, we have for any $\delta_{1}\in(0,2e^{-1})$, with probability at least $1-\delta_{1}$
\begin{align*}
    \|(T+\lambda_{n})^{-\frac{1}{2}}(T_{n}-T)\|_{\operatorname{op}}
    &\le \left\|\frac{1}{n}\sumn (\xi_{i}-\operatorname{E}\xi_{i})\right\|_{\operatorname{HS}}\\
    &\le \frac{2\kappa^{2}M_{2}^{2}\log\left(\frac{2}{\delta_{1}}\right)}{n\sqrt{\lambda_{n}}}+\sqrt{\frac{2\kappa^{2}M_{2}^{2}D(\lambda_{n})\log\left(\frac{2}{\delta_{1}}\right)}{n}}\le c_{1}\log\left(\frac{2}{\delta_{1}}\right)B_{n},
\end{align*}
where we let $c_{1}:=2\kappa^{2}M^{2}_{2}$ and $B_{n}:=\frac{1}{n\sqrt{\lambda_{n}}}+\sqrt{\frac{D(\lambda_{n})}{n}}$. $\hfill\square$

\subsection{Lemma \ref{3-3}, Lemma \ref{3-8} and Corollary \ref{Lip.Class.lem}}
\hspace{4mm}The Lemma \ref{3-3} and Lemma \ref{3-8} can be seen as the Bernstein-type concentration inequalities for the random variables taking values in a Hilbert space. In the Lemma \ref{3-3}, we assume the random variables to be bounded with regard to the norm in the Hilbert space, while we assume that the random variables satisfy the Bernstein's growth of moments condition in the Lemma \ref{3-8}. The first lemma is based on Theorem 3.3.4a in \cite{yurinsky2006sums}.
\begin{lemma}\label{3-8}
Let $\mathcal{H}$ be a Hilbert space endowed with norm $\|\cdot\|_{\operatorname{H}}$. Let $\{\xi_{i}\}_{i=1}^{n}$ be a sequence of $n$ independent random variables in $\mathcal{H}$ with zero mean. Assume there exist $B,M >0$ such that for all integers $l\ge 2$:
\begin{center}
$\operatorname{E}(\|\xi_i\|_{\operatorname{H}}^{l})\le \frac{B^{2}}{2}l!M^{l-2},~i=1,2,\cdots,n$,
\end{center}
then for any $\delta\in(0,1)$, we have
\begin{equation*}
    P\left( \left\|\frac{1}{n}\sumn\xi_{i}\right\|_{\operatorname{H}}\ge \frac{2M\log\left(\frac{2}{\delta}\right)}{n}+\sqrt{\frac{2\log\left(\frac{2}{\delta}\right)}{n}}B \right)\le \delta.
\end{equation*}
\end{lemma}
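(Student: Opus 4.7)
The plan is to recognize this as the Hilbert-space analogue of the classical Bernstein inequality: the hypothesis $\operatorname{E}\|\xi_i\|_{\operatorname{H}}^{l}\le \tfrac{B^{2}}{2}l!M^{l-2}$ is exactly Bernstein's moment condition, so the strategy is to mimic the scalar Bernstein proof while being careful that $\|\cdot\|_{\operatorname{H}}$ is a norm rather than a linear functional. The target non-exponential form of the bound is to first obtain the Bernstein-type tail inequality
\begin{equation*}
  P\!\left(\bigl\|\sumn\xi_{i}\bigr\|_{\operatorname{H}}\ge t\right)\le 2\exp\!\left(-\frac{t^{2}/2}{nB^{2}+Mt}\right),
\end{equation*}
and then invert it in $t$ to match the statement.

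First I would establish a moment generating function bound for $\|S_n\|_{\operatorname{H}}$ where $S_n=\sumn\xi_i$. The standard Taylor-expansion trick gives, for any $0<\lambda<1/M$ and each $i$,
\begin{equation*}
  \operatorname{E}\exp(\lambda\|\xi_i\|_{\operatorname{H}})-1-\lambda\operatorname{E}\|\xi_i\|_{\operatorname{H}}\le\sum_{l\ge2}\frac{\lambda^{l}}{l!}\operatorname{E}\|\xi_i\|^{l}_{\operatorname{H}}\le\frac{\lambda^{2}B^{2}/2}{1-\lambda M},
\end{equation*}
which is the scalar Bernstein MGF estimate. To lift this to the Hilbert-valued sum $S_n$, I invoke Pinelis' smoothness inequality for $2$-smooth Banach spaces (Hilbert spaces are $2$-smooth with constant $1$): for independent centered $\xi_i$ in $\mathcal{H}$ and any convex nondecreasing $\Phi$,
\begin{equation*}
  \operatorname{E}\,\Phi(\|S_n\|_{\operatorname{H}})\le\operatorname{E}\,\Phi\!\bigl(\tilde S_n\bigr),
\end{equation*}
where $\tilde S_n$ is a scalar sum of symmetrized $\|\xi_i\|_{\operatorname{H}}$-like terms. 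Applied to $\Phi(x)=e^{\lambda x}$ and combined with independence, this yields
\begin{equation*}
  \operatorname{E}\exp(\lambda\|S_n\|_{\operatorname{H}})\le 2\exp\!\left(\frac{n\lambda^{2}B^{2}/2}{1-\lambda M}\right),\qquad 0<\lambda<1/M.
\end{equation*}

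Next I apply Markov's inequality to get $P(\|S_n\|_{\operatorname{H}}\ge t)\le 2e^{-\lambda t}\exp(n\lambda^{2}B^{2}/(2(1-\lambda M)))$ and optimize over $\lambda\in(0,1/M)$ (choosing $\lambda=t/(nB^{2}+Mt)$ as in the classical Bernstein optimization). This produces the Bernstein tail
\begin{equation*}
  P\!\left(\|S_n\|_{\operatorname{H}}\ge t\right)\le 2\exp\!\left(-\frac{t^{2}}{2(nB^{2}+Mt)}\right).
\end{equation*}
Setting the right-hand side equal to $\delta$ gives the quadratic inequality $t^{2}-2M\log(2/\delta)\,t-2nB^{2}\log(2/\delta)\ge 0$; its positive root satisfies, via $\sqrt{a+b}\le\sqrt{a}+\sqrt{b}$,
\begin{equation*}
  t\le M\log(2/\delta)+\sqrt{M^{2}\log^{2}(2/\delta)+2nB^{2}\log(2/\delta)}\le 2M\log(2/\delta)+B\sqrt{2n\log(2/\delta)}.
\end{equation*}
Dividing by $n$ yields the bound in the lemma.

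The main obstacle is the passage from the scalar Bernstein MGF bound on the individual $\|\xi_i\|_{\operatorname{H}}$ to an MGF bound on $\|S_n\|_{\operatorname{H}}$, since the norm is not additive over sums. The cleanest way around this is the $2$-smoothness/Pinelis symmetrization inequality for Hilbert spaces; an alternative route is a direct truncation argument combined with the Pinelis--Sakhanenko bound for bounded Hilbert-valued variables, peeling off the levels $\{\|\xi_i\|_{\operatorname{H}}\in[2^{k}M,2^{k+1}M)\}$ and summing contributions using the moment condition. Both routes are standard and yield the same constants up to the form displayed above.
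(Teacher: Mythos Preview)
The paper does not actually prove this lemma: its entire ``proof'' is a one-line citation to Yurinsky's monograph. So there is no in-text argument to compare your sketch against; what you have written is a reasonable outline of how the result is obtained in that literature, and your inversion of the Bernstein tail to reach the displayed constants is correct.

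The one genuinely soft spot in your write-up is the invocation of ``Pinelis' smoothness inequality.'' The displayed comparison $\operatorname{E}\Phi(\|S_n\|_{\operatorname{H}})\le\operatorname{E}\Phi(\tilde S_n)$ with $\tilde S_n$ described only as ``a scalar sum of symmetrized $\|\xi_i\|_{\operatorname{H}}$-like terms'' is not a theorem as stated; it is a placeholder for whatever precise inequality you intend. The result you actually need (Pinelis 1994, Theorems~3.3--3.4, or the equivalent iteration in Yurinsky) is the $\cosh$-martingale bound in $(2,D)$-smooth spaces: writing $g_k=\|S_k\|_{\operatorname{H}}$ and $d_k=\xi_k$, one has for Hilbert spaces ($D=1$)
\[
\operatorname{E}\bigl[\cosh(\lambda g_k)\mid\mathcal{F}_{k-1}\bigr]\le\cosh(\lambda g_{k-1})\,\operatorname{E}\bigl[\cosh(\lambda\|d_k\|_{\operatorname{H}})\mid\mathcal{F}_{k-1}\bigr],
\]
which after iteration and the Bernstein moment condition gives exactly the MGF estimate $\operatorname{E}\exp(\lambda\|S_n\|_{\operatorname{H}})\le 2\exp\bigl(n\lambda^{2}B^{2}/(2(1-\lambda M))\bigr)$ you use. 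If you replace your informal comparison by this $\cosh$-iteration (or simply cite Pinelis/Yurinsky for it), the rest of your argument---Markov, the choice $\lambda=t/(nB^{2}+Mt)$, and the quadratic inversion---goes through verbatim and matches the lemma's constants.
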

\begin{proof}
We refer readers to Lemma 2 in \cite{lv2012integral} for the proof of this lemma.
\end{proof}

The growth of moments condition \label{eq:Bernstein} with $l=2$ gives $B=\sqrt{\max\limits_{1\le i \le n}\operatorname{E}(\|\xi_i\|_{\operatorname{H}}^{2})}$. Then we have the following lemma.
\begin{lemma}\label{3-3}
Let $\mathcal{H}$ be a Hilbert space endowed with norm $\|\cdot\|_{\operatorname{H}}$ and $\{\xi_{i}\}_{i=1}^{n}$ be a sequence of $n$ independent random variables  taking values in $\mathcal{H}$. Assume that $\|\xi\|_{\operatorname{H}}\le M$ (a.s.), then for any $\delta\in(0,1)$, with probability at least $1-\delta$.
\begin{equation*}
    \left\|\frac{1}{n}\sumn (\xi_{i}-\operatorname{E}\xi_{i})\right\|_{\operatorname{H}}\le
    \frac{2M\log\left(\frac{2}{\delta}\right)}{n}+
    \sqrt{\frac{2\max\limits_{1\le i \le n}\operatorname{E}(\|\xi_i\|_{\operatorname{H}}^{2})\log\left(\frac{2}{\delta}\right)}{n}}
\end{equation*}
\end{lemma}
\begin{proof}
For $l>2$, one has $\operatorname{E}(\|\xi_i\|_{\operatorname{H}}^{l})\le M^{l-2}\operatorname{E}(\|\xi_i\|_{\operatorname{H}}^2) < \frac{B^{2}}{2}l!M^{l-2},~i=1,2,\cdots,n$ with $B=\sqrt{\max\limits_{1\le i \le n}\operatorname{E}(\|\xi_i\|_{\operatorname{H}}^{2})}$. The result follows from Lemma \ref{3-8}.
\end{proof}

Next, we consider the sub-Gaussian concentration for norm of sum of random vectors in the Hilbert space. A centered random variable $X$ is called sub-Gaussian if
\begin{center}
${\rm{E}}{e^{t X}} \le {e^{{t^2}{{\sigma}^2}/2}},~\forall~t \in \mathbb{R}$,
\end{center}
where the quantity ${\sigma^2}>0$ is named as the sub-Gaussian \emph{variance proxy} [see \cite{zhang2020concentration}, denote as $X \sim \operatorname{subG}(\sigma^{2})$]. Let ${\left\| X \right\|_{G}} =\sup_{k \ge 1} {[ {\frac{{\rm{E}}{X^{2k}}}{(2 k-1) ! !}} ]^{1/(2k)}}$ be the sub-Gaussian norm, and $X$ is sub-Gaussian if ${\left\| X \right\|_{G}}<\infty$; see \cite{buldygin2000metric}. In order to derive concentration for the norm of sum of sub-Gaussian vectors in Hilbert space, we consider following framework in \cite{maurer2021concentration}.

Let $Z=\left(Z_{1}, \ldots, Z_{n}\right)$ be a vector of independent data with values in a space ${\operatorname{H}},$ define $Z^{\prime}$ as an independent copy of $Z$. Given a function
$f: {\operatorname{H}}^{n} \rightarrow \mathbb{R},$ it is of interest to study the concentration inequality for $f(Z)-\mathrm{E}f(Z)$. A special case is $f(Z)$ the norm function of data. For $w \in {\operatorname{H}}$ and $k \in\{1, \ldots, n\}$ define the substitution operator $S_{w}^{k}: {\operatorname{H}}^{n} \rightarrow {\operatorname{H}}^{n}$ by
$$
S_{w}^{k}(z)=\left(z_{1}, \ldots, z_{k-1}, w, z_{k+1}, \ldots, z_{n}\right)
$$
and the centered conditional version of $f$ as the random variable is given by
\begin{align}\label{eq:identy}
D_{f, Z_k}(z) & \equiv f\left(z_{1}, \ldots, z_{k-1}, {Z_{k}}, z_{k+1}, \ldots, z_{n}\right)-\mathrm{E}[f(z_{1}, \ldots, z_{k-1}, {Z_{k}^{\prime}}, z_{k+1}, \ldots, z_{n})]\nonumber\\
&=f\left(S_{Z_{k}}^{k}(z)\right)-\mathrm{E}[f(S_{Z_{k}^{\prime}}^{k}(z))]=\mathrm{E}[f\left(S_{Z_{k}}^{k} (z)\right)-f(S_{Z_{k}^{\prime}}^{k}(z))|Z_k].
\end{align}

 Here $\{D_{f, Z_k}(z)\}_{k=1}^n$ can be viewed as random-valued functions $z \in {\operatorname{H}}^{n} \mapsto D_{f, Z_k}(z)$. If $f(z)=\sum_{i=1}^{n} z_{i}$ then $D_{f, Z_k}(Z)= Z_{k}-\mathrm{E}Z_{k}$ is independent of $z .$

Based on the norm ${\left\|\cdot\right\|_{G}}$, we aim to obtain a tighter and extended McDiarmid's inequality for $f(X)-\mathrm{E}f(X)$ with stochastic bounded difference conditions concerning the structure of $f$. The following corollary is a tighter sub-Gaussian concentration for the norm of sum of sub-Gaussian vectors in Hilbert space, comparing to Theorem 3 in \cite{maurer2021concentration}.
\begin{corollary}\label{cor:fxcon}
Suppose that $\{{{D_{f,{Z_i}}}(z)}\}_{i=1}^n$ have zero mean defined by \eqref{eq:identy}. If $\{D_{f,  Z_i}(z)\}_{i=1}^n$ have finite $\| \cdot \|_G$-norm for all ${z \in {\operatorname{H}}}$, we have, $\forall t \ge 0$
\begin{center}
$f(Z)- {\rm{E}}f(Z) \sim \operatorname{subG}(8\mathop {\sup }\limits_{z \in {\operatorname{H}}} \sum\limits_{i = 1}^n {\left\| {{D_{ f,{Z_i}}}(z)} \right\|_{G}^2} )$ and
$P \left\{ {f(Z) - {\rm{E}}f(Z) > t} \right\} \le \exp ( {\frac{{ - {t^2}}}{{16\mathop {\sup }\limits_{z \in {\operatorname{H}}} \sum\limits_{k = 1}^n {\left\| {{D_{f,{Z_k}}}(z)} \right\|_{G}^2} }}} ).$
\end{center}
\end{corollary}

\begin{proof}
Let the \emph{tilted expectation} $\mathrm{E}_{Y}$ be ${\mathrm{E}_Y}[Z] = \mathrm{E}\left[ {Z \cdot \frac{{{e^Y}}}{{\mathrm{E}{e^Y}}}} \right]$ with the exponential weighted ${\frac{{{e^Y}}}{{\mathrm{E}{e^Y}}}}$. The proof is based on the entropy  of a random variable $Y$ defined by
$$S(Y):=\mathrm{E}_{Y}[Y]-\log \mathrm{E}\left[e^{Y}\right]=\mathrm{E}\left[ {Y \cdot \frac{{{e^Y}}}{{\mathrm{E}{e^Y}}}} \right]-\log \mathrm{E}\left[e^{Y}\right],$$
which is free of centering, i.e. $S(Y-\mathrm{E}Y)=S(Y)$.

Suppose that $\mathrm{E}{Y}=0$, by Jensen's inequality we have
\begin{align}\label{eq:entropyin}
S(Y)=\mathrm{E}_{Y}\left[\log\left(\frac{e^{Y}}{\mathrm{E}e^{Y}}\right)\right]  \le \log \mathrm{E}_{Y}\left[\frac{e^{Y}}{\mathrm{E}e^{Y}}\right]=\log \mathrm{E}\left[\frac{e^{Y}}{\mathrm{E}e^{Y}}\cdot \frac{{{e^Y}}}{{\mathrm{E}{e^Y}}}\right]=\log \mathrm{E}e^{2 Y}-2 \log \mathrm{E}e^{Y} \le \log \mathrm{E}e^{2 Y},
\end{align}
where the last inequality is also derived by  Jensen's inequality ${\rm{E}}{e^Y} \ge {e^{{\rm{E}}Y}} = 1.$

The concentration inequality is by Cramer-Chernoff method with the logarithm of the
MGF represented as the integral of entropy (Theorems 1 in \cite{maurer2012thermodynamics}).
\begin{align}\label{eq:logmgf}
\log \mathrm{E}\left[e^{t(Y-\mathrm{E}Y)}\right] = \log \mathrm{E}\left[e^{t Y}\right] = t \int_{0}^{t} \frac{S(\gamma Y) d \gamma}{\gamma^{2}},~\forall~t >0
\end{align}
and the \emph{subadditivity of entropy} (see Theorems 6 and Section 3.1 in \cite{maurer2012thermodynamics})
\begin{align}\label{eq:subadditivity}
S(f(Z)) \le {{\rm{E}}_{f(Z)}}[ {\sum\limits_{i = 1}^n S ({D_{f,{Z_i}}}(Z))}],~\text{where we denote}~S({D_{f,{Z_i}}}(Z)):=S({D_{f,{Z_k}}}(z))|_{z=Z}.
\end{align}

If $Y$  has zero mean, then ${\rm{E}}{e^{tY}} = 1 + \sum\limits_{k = 2}^\infty  {\frac{{{t^k}{\rm{E}}{Y^k}}}{{k!}}}$. Note that by Cauchy's inequality and arithmetic-geometric mean inequality
 \[{\rm{E}}|tY|^{2k + 1} \le {\left( {{\rm{E}}|tY|^{2k}{\rm{E}}|t Y|^{2k + 2}} \right)^{1/2}} \le \frac{1}{2}\left( {{t^{2k}}{\rm{E}}{Y^{2k}} + {t^{2k + 2}}{\rm{E}}{Y^{2k + 2}}} \right).\]
Then, $\frac{{{\rm{E}}|tY{|^3}}}{{3!}} \le \frac{1}{{2 \cdot 3!}}\left( {{t^2}{\rm{E}}{Y^2} + {t^4}{\rm{E}}{Y^4}} \right)$ implies
\begin{align}\label{eq:oddmoments}
{\rm{E}}{e^{tY}}& \le 1 + \left( {\frac{1}{2} + \frac{1}{{2 \cdot 3!}}} \right){t^2}{\rm{E}}{Y^2} + \sum\limits_{k = 2}^\infty  {\left( {\frac{1}{{(2k)!}} + \frac{1}{2}\left[ {\frac{1}{{(2k - 1)!}} + \frac{1}{{(2k + 1)!}}} \right]} \right)} {t^{2k}}{\rm{E}}{Y^{2k}}\nonumber\\
&\le \sum\limits_{k = 0}^\infty  {{2^k}} \frac{{{t^{2k}}{\rm{E}}{Y^{2k}}}}{{(2k)!}} \le \exp \{{{t^2{{\left\| Y\right\|}_{G}^2}}}\},
\end{align}
where the last inequality is by the definition of $\left\| Y \right\|_{G}<\infty$ and $\mathrm{E}{Y^{2k}} \le \frac{{(2k)!}}{{{2^k}k!}}\left\| Y\right\|_{G}^{2k}$.

Thus \eqref{eq:oddmoments} shows
$\log {\rm{E}}{e^{2tY}}\le {4{t^2}\left\| Y\right\|_{G}^2}$ and \eqref{eq:entropyin} gives
\begin{align}\label{eq:syG}
S({D_{f,{Z_i}}}(z))  \le \log \mathrm{E}e^{2 {D_{f,{Z_i}}}(z) } \le {4 \left\| {D_{f,{Z_i}}}(z)\right\|_{G}^2},~{z \in {\operatorname{H}}}.
\end{align}

Denote $\left\| {D_{f,{Z_i}}}(Z)\right\|_{G}^2:=\left\| {D_{f,{Z_i}}}(z)\right\|_{G}^2|_{z=Z}$. Combing \eqref{eq:logmgf} and \eqref{eq:subadditivity}, we obtain
\begin{align*}
\log \mathrm{E}\left[e^{t(f(Z)-\mathrm{E} f(Z))}\right]&=t \int_{0}^{t} \frac{S(\gamma f(Z)) d \gamma}{\gamma^{2}} \leq t \int_{0}^{t} \frac{1}{\gamma^{2}} \mathrm{E}_{\gamma f(Z)}\left[\sum_{i=1}^{n} S\left(D_{ \gamma f, Z_{i}}(Z)\right)\right] d \gamma \\
& = t \int_{0}^{t} \frac{1}{\gamma^{2}} \mathrm{E}_{\gamma f(Z)}\left[\sum_{i=1}^{n} S\left(\gamma D_{f, Z_{i}}(z)\right)|_{z=Z}\right] d \gamma \\
[\text{By}~\eqref{eq:syG}]&\le t \int_{0}^{t} \frac{4 \gamma^{2}}{\gamma^{2}} \mathrm{E}_{\gamma f(Z)}\left[\sum_{i=1}^{n}\left\|D_{ f, Z_{i}}(z)\right\|_{G}^{2}|_{z=Z}\right] d \gamma\\
& \le t \int_0^t  {\frac{{4{\gamma ^2}}}{{{\gamma ^2}}}} {{\rm{E}}_{\gamma f(Z)}}\left[ {\mathop {\sup }\limits_{z \in {\operatorname{H}}} \sum\limits_{i = 1}^n {\left\| {{D_{ f,{Z_i}}}(z)} \right\|_{G}^2} } \right]d\gamma=8\mathop {\sup }\limits_{z \in Z} \sum\limits_{i = 1}^n {\left\| {{D_{f,{Z_i}}}(z)} \right\|_{G}^2}  \cdot \frac{{{t^2}}}{{\rm{2}}},
\end{align*}
which shows
$f(Z) - \operatorname{E}f(Z) \sim \operatorname{subG}\left(8\mathop {\sup }\limits_{z \in {\operatorname{H}}} \sum\limits_{i = 1}^n {\left\| {{D_{ f,{Z_i}}}(z)} \right\|_{G}^2}\right).$
\end{proof}

\begin{corollary}[Hoeffding-type inequality for norm of vector]\label{Lip.Class.lem}
Suppose $\{\xi_i\}_{i=1}^n$ are independent random elements with values in a Hilbert space ${\operatorname{H}}$ s.t. ${\max }_{i \in [n]}\|\|\xi_{i}\|_{\operatorname{H}}\|_{G} < \infty$.  Then, with probability at least $1-\delta$
\begin{align}\label{eq:subg}
\left\|\frac{1}{n}\sumn (\xi_{i}-\operatorname{E}\xi_{i})\right\|_{\operatorname{H}} \leq \frac{1}{\sqrt n}\left\{\sqrt{\frac{1}{n} \sum\limits_{i=1}^n{\E \| \xi_{i}-\operatorname{E}\xi_{i}\|^2}}+8\sqrt{\frac{1}{n}\sum\limits_{i=1}^n \| \| \xi_{i}\|_{\operatorname{H}}\|_{G}^2 \log (\frac{1}{\delta})}\right\}.
\end{align}
\end{corollary}
\begin{proof}
    Consider the function $f(\boldsymbol{z}) := \big\| \sum_{i=1}^n \boldsymbol{z}_i - \boldsymbol{v} \big\|$ for any $ \boldsymbol{v},\boldsymbol{z}_i \in {\operatorname{H}}$. From \eqref{eq:identy}, we have
    \begin{align}\label{eq:1LC}
 \mathrm{E}[f\big( S_{\xi_k}^k (\boldsymbol{z})\big) - f\big( S_{\xi_k^{\prime}}^k (\boldsymbol{z})\big)|\xi_k ]&= \E \Big[\bigg| \Big\| \sum_{i \neq k} \boldsymbol{z}_i + \xi_k - \boldsymbol{v}\Big\| -  \Big\| \sum_{i \neq k} \boldsymbol{z}_i + \xi_k^{\prime} - \boldsymbol{v}\Big\| \bigg| \, \Big| \, \xi_k \Big] \leq \E \big[ \| \xi_k - \xi_k^{\prime}\|_{\operatorname{H}} \, \big| \, \xi_k \big],
    \end{align}
 for $k=1,2,\cdots,n$, which directly implies $\| D_{f, \xi_k} (\boldsymbol{z}) \|_G \le \| \| \xi_k-\xi_k^{\prime} \|_{\operatorname{H}} \|_G $ by following derivation:
\begin{align}\label{eq:identy2}
\|D_{f, \xi_k}(\boldsymbol{z}) \|_G &=  \|f\left(z_{1}, \ldots, z_{k-1}, \xi_{k}, z_{k+1}, \ldots, x_{n}\right) - \mathrm{E}[f(z_{1}, \ldots, z_{k-1}, \xi_{k}^{\prime}, z_{k+1}, \ldots, z_{n})] \|_G\nonumber\\
&=\|\mathrm{E}[f(S_{\xi_{k}}^{k} (\boldsymbol{z}))-f(S_{\xi_{k}^{\prime}}^{k} (\boldsymbol{z}))|\xi_k] \|_G\nonumber\\
[\text{By~}\eqref{eq:1LC}]~& \le \|\mathrm{E}[\| \xi_k - \xi_k^{\prime}\|_{\operatorname{H}}|\xi_k]\|_G.
\end{align}
The conditional Jensen's inequality and Holder's inequality show for any $k \geq 1$
\begin{align*}
\mathrm{E}\left[\left|\mathrm{E}\left[\| \xi_k - \xi_k^{\prime}\|_{\operatorname{H}}\mid {\xi_k}\right]\right|^{2k}\right]&\le \mathrm{E}\left[\mathrm{E}[\| \xi_k - \xi_k^{\prime}\|_{\operatorname{H}} \mid {\xi_k}]^{2k}\right]  \le \mathrm{E}\left\{\mathrm{E}\left[\| \xi_k - \xi_k^{\prime}\|_{\operatorname{H}}^{2k} \mid {\xi_k}\right]\right\}= \mathrm{E}\left|\| \xi_k - \xi_k^{\prime}\|_{\operatorname{H}}\right|^{2k}.
\end{align*}
Then the definition ${\left\| Y\right\|_{G}} = \sup_{k \ge 1} {\left[ {\frac{{{2^k}k!}}{{(2k)!}}{\rm{E}}{Y^{2k}}} \right]^{1/(2k)}}$ and \eqref{eq:identy2} show
\begin{align}\label{eq:identy3}
\| D_{f, \xi_k} (\boldsymbol{z}) \|_G \le \| \| \xi_k-\xi_k^{\prime} \|_{\operatorname{H}} \|_G \le 2\| \| \xi_k \|_{\operatorname{H}} \|_G.
\end{align}

Now we take $\boldsymbol{v}=\sum_{i=1}^n\E \xi_{i}$ and apply Corollary \ref{cor:fxcon} with the first inequality in \eqref{eq:identy3}. Let $e^{t}=\delta$ for solving $t$, we have with probability at least $1-\delta$, for $Z:=\left(\xi_{1}, \ldots, \xi_{n}\right)$
\begin{align*}
f(Z)-\E f(Z)=\left\|\sumn (\xi_{i}-\operatorname{E}\xi_{i})\right\|_{\operatorname{H}} - \E \left\|\sumn (\xi_{i}-\operatorname{E}\xi_{i})\right\|_{\operatorname{H}}& \le 4\sqrt{\sum_{k=1}^n \| \| \xi_k - \xi_k^{\prime} \|_{\operatorname{H}}\|_G^2 \log (1 / \delta)}\\
[\text{By~the second inequality in }\eqref{eq:identy3}]~ & \le 8\sqrt{\sum_{k=1}^n \| \|\xi_k \|_{\operatorname{H}}\|_G^2 \log (1 / \delta)}.
\end{align*}
Since ${\operatorname{H}}$ is a Hilbert space, $\{\xi_i\}_{i=1}^n$ are independent, and Jensen's inequality implies
\begin{center}
$\operatorname{E}\left\|\sumn (\xi_{i}-\operatorname{E}\xi_{i})\right\|_{\operatorname{H}}\leq \sqrt{\E\left\|\sumn (\xi_{i}-\operatorname{E}\xi_{i})\right\|_{\operatorname{H}}^2} = \sqrt{\sum_{i=1}^n \E \| \xi_i - \E \xi_i \|^2}.$
\end{center}
Then, with probability at least $1-\delta$, we have \eqref{eq:subg}.
\end{proof}
\subsection{Lemma \ref{3-4}}
\hspace{4mm}The Lemma \ref{3-4} shows the concentration property of $(T+\lambda_{n}I)^{-\frac{1}{2}}g_{n}$.
\begin{lemma}\label{3-4}
Under the Assumption \ref{ass-3}, for any $\delta_{3}\in(0,1)$, with probability at least $1-\delta_{3}$, there exists
\begin{equation*}
    \|(T+\lambda_{n}I)^{-\frac{1}{2}}g_{n}\|\le \frac{\sigma}{\sqrt{\delta_{3}}}B_{n}.
\end{equation*}
\end{lemma}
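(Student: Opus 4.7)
The plan is to bound $\mathbb{E}\|(T+\lambda_n I)^{-1/2} g_n\|^2$ explicitly and then apply Markov's inequality, which is the natural approach since only a finite variance is assumed on $\varepsilon$.

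First I would expand the squared norm. Writing $g_n = \frac{1}{n}\sum_{i=1}^n \varepsilon_i L_K^{1/2} Y_i$, I compute
\begin{equation*}
\bigl\|(T+\lambda_n I)^{-1/2} g_n\bigr\|^2 = \frac{1}{n^2} \sum_{i,j=1}^{n} \varepsilon_i \varepsilon_j \bigl\langle (T+\lambda_n I)^{-1/2} L_K^{1/2} Y_i,\ (T+\lambda_n I)^{-1/2} L_K^{1/2} Y_j \bigr\rangle.
\end{equation*}
Taking expectation and using that $\varepsilon_i$ is independent of $(\varepsilon_j, Y_i, Y_j)$ for $i\ne j$ and $\mathbb{E}\varepsilon_i=0$, $\mathbb{E}\varepsilon_i^2=\sigma^2$, the off-diagonal terms vanish and one is left with
\begin{equation*}
\mathbb{E}\bigl\|(T+\lambda_n I)^{-1/2} g_n\bigr\|^2 = \frac{\sigma^2}{n}\, \mathbb{E}\bigl\|(T+\lambda_n I)^{-1/2} L_K^{1/2} Y\bigr\|^2.
\end{equation*}

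Next I would evaluate the remaining expectation via the spectral decomposition of $T$. Using the orthonormal eigenbasis $\{\varphi_k\}$ with $T\varphi_k=\tau_k\varphi_k$,
\begin{equation*}
\bigl\|(T+\lambda_n I)^{-1/2} L_K^{1/2} Y\bigr\|^2 = \sum_{k=1}^{+\infty} \frac{1}{\tau_k+\lambda_n}\, \bigl|\langle L_K^{1/2} Y, \varphi_k\rangle\bigr|^2 = \sum_{k=1}^{+\infty} \frac{1}{\tau_k+\lambda_n}\,\bigl|\langle Y, L_K^{1/2}\varphi_k\rangle\bigr|^2.
\end{equation*}
Applying the identity \eqref{2-7} yields $\mathbb{E}|\langle Y, L_K^{1/2}\varphi_k\rangle|^2 = \langle L_K^{1/2}\varphi_k, L_C L_K^{1/2}\varphi_k\rangle = \langle \varphi_k, T\varphi_k\rangle = \tau_k$, so
\begin{equation*}
\mathbb{E}\bigl\|(T+\lambda_n I)^{-1/2} L_K^{1/2} Y\bigr\|^2 = \sum_{k=1}^{+\infty}\frac{\tau_k}{\tau_k+\lambda_n} = \operatorname{Tr}\bigl((T+\lambda_n I)^{-1} T\bigr) = D(\lambda_n).
\end{equation*}
Combining these two displays gives the clean identity $\mathbb{E}\|(T+\lambda_n I)^{-1/2} g_n\|^2 = \sigma^2 D(\lambda_n)/n$.

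Finally, Markov's inequality applied to the nonnegative random variable $\|(T+\lambda_n I)^{-1/2} g_n\|^2$ gives, for any $\delta_3\in(0,1)$,
\begin{equation*}
P\!\left(\|(T+\lambda_n I)^{-1/2} g_n\| \ge \frac{\sigma}{\sqrt{\delta_3}}\sqrt{\tfrac{D(\lambda_n)}{n}}\right) \le \delta_3,
\end{equation*}
and since $\sqrt{D(\lambda_n)/n} \le B_n = \frac{1}{n\sqrt{\lambda_n}} + \sqrt{\frac{D(\lambda_n)}{n}}$, the stated bound follows. There is no real obstacle here: once the cross terms are killed by independence and zero-mean of $\varepsilon$, the identification of the residual trace with the effective dimension $D(\lambda_n)$ is immediate from the spectral calculus already used in Lemma \ref{3-1}, and Markov closes the argument.
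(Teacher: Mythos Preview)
Your proof is correct and follows essentially the same approach as the paper: compute $\mathbb{E}\|(T+\lambda_n I)^{-1/2}g_n\|^2=\sigma^2 D(\lambda_n)/n$ via the spectral decomposition of $T$ and then apply Markov's inequality, finishing with $\sqrt{D(\lambda_n)/n}\le B_n$. The only cosmetic difference is that the paper first names the summands $\xi_i:=\varepsilon_i(T+\lambda_n I)^{-1/2}L_K^{1/2}Y_i$ and uses $\mathbb{E}\|\tfrac{1}{n}\sum_i\xi_i\|^2=\mathbb{E}\|\xi\|^2/n$, whereas you expand the double sum directly; the substance is identical.
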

\begin{proof}
Define random variables $\xi$ and $\{\xi_{i}\}_{i=1}^n)$ taking values in the Hilbert space $L^{2}(\mathcal{T})$ by
\begin{equation*}
    \xi:=\varepsilon(T+\lambda_{n}I)^{-\frac{1}{2}}\K Y
    \quad\text{and}\quad
    \xi_{i}:=\varepsilon_{i}(T+\lambda_{n}I)^{-\frac{1}{2}}\K Y_{i},
\end{equation*}
where $\{\xi_{i}\}_{i=1}^{n}$ are independent copies of $\xi$ and $(T+\lambda_{n}I)^{-\frac{1}{2}}g_{n}=\frac{1}{n}\sumn\xi_{i}$.

{Noticing the random noise $\varepsilon$ has conditional zero mean given $Y$, we have
\begin{equation*}
    \operatorname{E}\xi=\operatorname{E}((T+\lambda_{n}I)^{-\frac{1}{2}}\K Y\cdot\operatorname{E}(\varepsilon|Y))=0.
\end{equation*}}

Expanding $\xi$ by the basis $\{\varphi_{k}:k\ge 1\}$ of the operator $T$, and we have
\begin{align*}
    \operatorname{E}(\|\xi\|^{2})
    &=\operatorname{E}\left( \left\| \suminf \innerproduct{\varepsilon(T+\lambda_{n}I)^{-\frac{1}{2}}\K Y}{\varphi_{k}}\varphi_{k} \right\|^{2} \right)=\operatorname{E}\left( \varepsilon^{2} \left\| \suminf \innerproduct{\K Y}{(T+\lambda_{n}I)^{-\frac{1}{2}}\varphi_{k}}\varphi_{k} \right\|^{2} \right)\\
    &=\operatorname{E}\left(\varepsilon^{2} \left\|\suminf \frac{1}{\sqrt{\tau_{k}+\lambda_{n}}}\innerproduct{\K Y}{\varphi_{k}}\varphi_{k}\right\|^{2} \right)=\operatorname{E}\left( \varepsilon^{2}\suminf\frac{|\innerproduct{\K Y}{\varphi_{k}}|^{2}}{\tau_{k}+\lambda_{n}} \right)\\
    &= \operatorname{E}\left( \suminf\frac{|\innerproduct{\K Y}{\varphi_{k}}|^{2}}{\tau_{k}+\lambda_{n}}\cdot \operatorname{E(\varepsilon^2| Y)} \right)    \le \sigma^{2}\suminf\frac{\operatorname{E}(|\innerproduct{\K Y}{\varphi_{k}}|^{2})}{\tau_{k}+\lambda_{n}} =\sigma^{2}\suminf\frac{\innerproduct{T\varphi_{k}}{\varphi_{k}}}{\tau_{k}+\lambda_{n}}=\sigma^{2}D(\lambda_{n}).
\end{align*}
Hence we have
\begin{equation*}
    \operatorname{E}(\|(T+\lambda_{n}I)^{-\frac{1}{2}}g_{n}\|^{2})
    =\operatorname{E}(\|\frac{1}{n}\sumn\xi_{i}\|^{2})=\frac{\operatorname{E}(\|\xi\|^{2})}{n} \le \frac{\sigma^{2}D(\lambda_{n})}{n}.
\end{equation*}
Using Markov inequality, we get $P(\|(T+\lambda_{n}I)^{-\frac{1}{2}}g_{n}\|\ge t)\le \frac{\sigma^{2}D(\lambda_{n})}{n t^{2}}$ from which we conclude with probability at least $1-\delta_{3}$, we have
\begin{equation*}
    \|(T+\lambda_{n}I)^{-\frac{1}{2}}g_{n}\|\le \frac{\sigma}{\sqrt{\delta_{3}}}\sqrt{\frac{D(\lambda_{n})}{n}}
    \le \frac{\sigma}{\sqrt{\delta_{3}}}B_{n}.
\end{equation*}
\end{proof}

\subsection{Two crucial inequalities}
\hspace{4mm}The following two inequalities play an important role in the proof of the Theorem \ref{thm-1}, which shows $\|(T+\lambda_{n}I)(T_{n}+\lambda_{n}I)^{-1}\|_{\operatorname{op}}$ and $\|(T+\lambda_{n}I)^{\frac{1}{2}}(T_{n}+\lambda_{n}I)^{-\frac{1}{2}}\|_{\operatorname{op}}$ can be bounded by $\opnorm{(T+\lambda_{n}I)^{-\frac{1}{2}}(T_{n}-T)}$.
\begin{inequality}\label{ineq-1}
$\opnorm{(T+\lambda_{n}I)(T_{n}+\lambda_{n}T)^{-1}}\le \left(\frac{1}{\sqrt{\lambda_{n}}}\opnorm{(T+\lambda_{n}I)^{-\frac{1}{2}}(T_{n}-T)}+1 \right)^{2}$.
\end{inequality}
\begin{proof}
Using the following decomposition of the operator product
\begin{equation*}
    BA^{-1}=(B-A)B^{-1}(B-A)A^{-1}+(B-A)B^{-1}+I
\end{equation*}
with $A=T_{n}+\lambda_{n}I$ and $B=T+\lambda_{n}I$, we have
\begin{align*}
    (T+\lambda_{n}I)(T_{n}+\lambda_{n}T)^{-1}
    &=(T-T_{n})(T+\lambda_{n}I)^{-1}(T-T_{n})(T_{n}+\lambda_{n}I)^{-1} +(T-T_{n})(T+\lambda_{n}I)^{-1}+I \\
    &=:F_{1}+F_{2}+I.
\end{align*}
\par For the operator $F_{1}$, we have
\begin{align*}
    \opnorm{F_{1}}
    &\le \opnorm{(T-T_{n})(T+\lambda_{n}I)^{-\frac{1}{2}}}
    \opnorm{(T+\lambda_{n}I)^{-\frac{1}{2}}(T-T_{n})}\cdot\frac{1}{\lambda_{n}}=\frac{1}{\lambda_{n}}\opnorm{(T+\lambda_{n}I)^{-\frac{1}{2}}(T_{n}-T)}^{2},
\end{align*}
where we use the fact $\opnorm{AB}=\opnorm{(AB)^{*}}=\opnorm{B^{*}A^{*}}=\opnorm{BA}$ for any self-adjoint operators $A$ and $B$, and the bound $\opnorm{(T_{n}+\lambda_{n}I)^{-1}}\le \frac{1}{\lambda_{n}}$.
\par For the operator $F_{2}$, applying $\opnorm{(T+\lambda_{n}I)^{-\frac{1}{2}}}\le\frac{1}{\sqrt{\lambda_{n}}}$, we have
\begin{align*}
    \opnorm{F_{2}}
    &\le \opnorm{(T-T_{n})(T+\lambda_{n}I)^{-\frac{1}{2}}}\opnorm{(T+\lambda_{n}I)^{-\frac{1}{2}}}\le \frac{1}{\sqrt{\lambda_{n}}}\opnorm{(T+\lambda_{n}I)^{-\frac{1}{2}}(T_{n}-T)}.
\end{align*}
Thus we obtain
\begin{align*}
    \opnorm{(T+\lambda_{n}I)(T_{n}+\lambda_{n}T)^{-1}}
    &\le \frac{1}{\lambda_{n}}\opnorm{(T+\lambda_{n}I)^{-\frac{1}{2}}(T_{n}-T)}^{2}+\frac{1}{\sqrt{\lambda_{n}}}\opnorm{(T+\lambda_{n}I)^{-\frac{1}{2}}(T_{n}-T)}+1 \\
    &\le \left(\frac{1}{\sqrt{\lambda_{n}}}\opnorm{(T+\lambda_{n}I)^{-\frac{1}{2}}(T_{n}-T)}+1 \right)^{2}.
\end{align*}
\end{proof}

\begin{inequality}\label{ineq-2}
$\|(T_{n}+\lambda_{n}I)^{-\frac{1}{2}}(T+\lambda_{n}I)^{\frac{1}{2}}\|_{\operatorname{op}}
    =\|(T+\lambda_{n}I)^{\frac{1}{2}}(T_{n}+\lambda_{n}I)^{-\frac{1}{2}}\|_{\operatorname{op}}\le\frac{1}{\sqrt{\lambda_{n}}}\opnorm{(T+\lambda_{n}I)^{-\frac{1}{2}}(T_{n}-T)}+1$.
\end{inequality}
\begin{proof}
Applying the fact  that
\begin{equation*}
\opnorm{A^{\gamma}B^{\gamma}}\le\opnorm{AB}^{\gamma},\quad \gamma\in(0,1)
\end{equation*}
for positive operators $A$ and $B$ defined on Hilbert space (see Lemma A.7 in \cite{blanchard2010optimal}), we have
\begin{align*}
    \|(T_{n}+\lambda_{n}I)^{-\frac{1}{2}}(T+\lambda_{n}I)^{\frac{1}{2}}\|_{\operatorname{op}}
    &=\|(T+\lambda_{n}I)^{\frac{1}{2}}(T_{n}+\lambda_{n}I)^{-\frac{1}{2}}\|_{\operatorname{op}} \\
    &\le \opnorm{(T+\lambda_{n}I)(T_{n}+\lambda_{n}T)^{-1}}^{\frac{1}{2}} \le \frac{1}{\sqrt{\lambda_{n}}}\opnorm{(T+\lambda_{n}I)^{-\frac{1}{2}}(T_{n}-T)}+1,
\end{align*}
where in the last step we use the inequality \eqref{ineq-1}.
\end{proof}

\subsection{Lemma \ref{KLlemma}}
\hspace{4mm} The Lemma \ref{KLlemma} is helpful in constructing the lower bound which is based on the testing multiple hypothesis.
\begin{lemma}\label{KLlemma}
Assume $N \ge 2$ and suppose there exists $\Theta=\{\theta_{i}\}_{i=0}^{N}$ such that the conditions are satisfied:
\begin{enumerate}
\item $2r$-separated condition: $d(\theta_{j}, \theta_{k}) \geq 2 r>0, \quad \forall\, 0 \leq j<k \leq N$,
\item  Kullback-Leibler average condition: if $P_{j} \ll P_{0}$ for $1\le j\le N$ and
\begin{center}
$\frac{1}{N} \sum_{j=1}^{N} K\left(P_{j} | P_{0}\right) \leq \rho \log N$ for some $0<\rho <\frac{1}{8}$ and $P_{j}=P_{\theta_{j}}\,(0\le j\le N)$.
\end{center}
\end{enumerate}
Then for all possible random variables $\tilde{\theta}$, we have
$$\inf _{\tilde{\theta}} \sup _{\theta \in \Theta} P_{\theta}(d(\tilde{\theta}, \theta) \geq  r) \geq \frac{\sqrt{N}}{1+\sqrt{N}}\left(1-2 \rho-\sqrt{\frac{2 \rho}{\log N}}\right)>0.$$
\end{lemma}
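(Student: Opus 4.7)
The plan is to reduce the estimation problem over $\Theta$ to a multiple hypothesis testing problem, and then invoke a Fano-type lower bound on the minimax testing error. This lemma is essentially Theorem 2.5 of Tsybakov's \emph{Introduction to Nonparametric Estimation}, and the cleanest proof proceeds by separating the ``geometric'' and ``information-theoretic'' ingredients.

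\textbf{Step 1 (reduction to testing).} Given an arbitrary estimator $\tilde\theta$, I would define a test $\psi \in \{0, 1, \ldots, N\}$ by minimum-distance decoding: let $\psi$ be any minimizer of $j \mapsto d(\tilde\theta, \theta_j)$, breaking ties arbitrarily. The $2r$-separation of $\Theta$ together with the triangle inequality forces $\psi = j$ whenever $d(\tilde\theta, \theta_j) < r$, since for any $k \neq j$,
$$d(\tilde\theta, \theta_k) \geq d(\theta_j, \theta_k) - d(\tilde\theta, \theta_j) > 2r - r = r > d(\tilde\theta, \theta_j).$$
Contrapositively, under $P_j$ the event $\{\psi \neq j\}$ is contained in $\{d(\tilde\theta, \theta_j) \geq r\}$, so
$$\inf_{\tilde\theta} \sup_{\theta \in \Theta} P_\theta\bigl(d(\tilde\theta, \theta) \geq r\bigr) \geq \inf_\psi \max_{0 \leq j \leq N} P_j(\psi \neq j).$$

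\textbf{Step 2 (Fano-type inequality).} It remains to bound $\inf_\psi \max_j P_j(\psi \neq j)$ from below by the target expression, under the hypothesis $\frac{1}{N}\sum_{j=1}^N K(P_j\mid P_0) \leq \rho \log N$ with $\rho \in (0, 1/8)$. This is exactly the content of Tsybakov's minimax testing lemma: the conclusion
$$\inf_\psi \max_{0 \leq j \leq N} P_j(\psi \neq j) \geq \frac{\sqrt N}{1 + \sqrt N}\left(1 - 2\rho - \sqrt{\frac{2\rho}{\log N}}\right)$$
follows from the standard truncated likelihood-ratio argument, in which one introduces the random variable $Z = \min_{1 \leq j \leq N} \log (dP_j/dP_0)$, truncates at a level comparable to $\log \sqrt N$, and uses Markov's inequality together with the KL bound to control the probability that $Z$ lies below the threshold, and hence the probability of misclassification on $P_0$.

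\textbf{Main obstacle.} The genuinely nontrivial piece is Step 2, but since it is a well-known result I would simply cite it (for instance, Theorem 2.5 in Tsybakov's monograph) rather than reprove it; Step 1 is a routine reduction using only the separation condition. The positivity of the lower bound for $N \geq 2$ and $\rho < 1/8$ follows by checking that $1 - 2\rho - \sqrt{2\rho/\log 2} > 0$, which holds on this range of $\rho$. Combining the two steps yields the claimed inequality.
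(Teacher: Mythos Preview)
Your proposal is correct and takes essentially the same approach as the paper: the paper does not prove this lemma at all but simply refers the reader to Theorem 2.5 in Tsybakov's \emph{Introduction to Nonparametric Estimation}, which is exactly what you identify and cite. Your sketch of the reduction-to-testing plus Fano-type argument is in fact more detailed than what the paper provides.
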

\begin{proof}
We refer readers to Theorem 2.5 in \cite{tsybakov2008} for the proof of this lemma.
\end{proof}

\subsection{Varhsamov-Gilbert Lemma}
\hspace{4mm} We need the Varhsamov-Gilbert lemma in the proof of the Theorem \ref{thm-2Euclidean} to construct the analogy of $\Theta$ in Lemma \ref{KLlemma}.
\begin{lemma}\label{VGlemma}
 Let $H\left(\theta, \theta^{\prime} \right)=\sum_{k=1}^{M} 1(\theta_{k} \neq \theta_{k}^{\prime})$ be the Hamming distance between elements $\theta, \theta^{\prime}$ in $\{0, 1\}^{M}$. For any integer $M \ge 8$, there exist vectors $\{ \theta^{i}\}_{i=0}^{N}\subset \{0,1\}^{M}$ such that
\begin{center}
\rm{(i)} $\theta^{0}=(0,\cdots,0)$,~~
\rm{(ii)} $H(\theta^{i}, \theta^{j}) > \frac{M}{8}$ for all $i \neq j$,~~
\rm{(iii)} $N \geq 2^{\frac{M}{8}}$.
\end{center}
\end{lemma}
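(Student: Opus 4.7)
The plan is to use the classical sphere-packing (greedy) construction. Start with $\mathcal{C}_0 := \{(0,\ldots,0)\}$ so that $\theta^0 = 0$ is included by construction. Iteratively build a sequence $\mathcal{C}_0 \subset \mathcal{C}_1 \subset \cdots$ by adding, at each step, any vector in $\{0,1\}^M$ whose Hamming distance to every element of the current set is strictly greater than $M/8$; stop when no such vector exists, and call the final set $\mathcal{C} = \{\theta^0,\theta^1,\ldots,\theta^N\}$. Conditions (i) and (ii) are then immediate, so the content of the lemma is the lower bound $N \ge 2^{M/8}$ on the cardinality of $\mathcal{C}$.

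To derive this bound, I would exploit maximality of $\mathcal{C}$: every $\omega \in \{0,1\}^M$ must lie within Hamming distance $\lfloor M/8 \rfloor$ of some $\theta^i$ (otherwise it could be added), so the closed Hamming balls of radius $\lfloor M/8 \rfloor$ centered at the $\theta^i$ cover $\{0,1\}^M$. This gives
\begin{equation*}
(N+1) \sum_{k=0}^{\lfloor M/8 \rfloor} \binom{M}{k} \; \ge \; 2^M.
\end{equation*}
The next step is the standard binary-entropy estimate of the partial sum of binomial coefficients: for $\lambda \in (0, 1/2)$,
\begin{equation*}
\sum_{k=0}^{\lfloor \lambda M \rfloor} \binom{M}{k} \; \le \; 2^{M\, H_2(\lambda)}, \qquad H_2(\lambda) = -\lambda \log_2 \lambda - (1-\lambda)\log_2(1-\lambda).
\end{equation*}
Specializing to $\lambda = 1/8$, a direct calculation shows $H_2(1/8) \le 7/8$, so $N+1 \ge 2^{M(1-H_2(1/8))} \ge 2^{7M/8}$, which comfortably dominates $2^{M/8}$ for $M \ge 8$.

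The main obstacle is keeping the constants clean enough to produce exactly the threshold $2^{M/8}$ for all $M \ge 8$ (as opposed to something like $2^{M/8 - O(1)}$ from crude bounds); this requires either the entropy bound above or an explicit computation for the borderline case $M = 8$. If the sphere-packing estimate turns out to be inconvenient at the boundary, a clean alternative is the probabilistic method: draw $N+1 = \lceil 2^{M/8}\rceil + 1$ i.i.d.\ uniform vectors in $\{0,1\}^M$, note that each pairwise Hamming distance is $\mathrm{Bin}(M,1/2)$, apply Hoeffding's inequality to get $P(H(\omega^{(i)},\omega^{(j)}) \le M/8) \le \exp(-9M/32)$, and take a union bound over the $\binom{N+1}{2}$ pairs. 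A short arithmetic check shows the failure probability is strictly less than one for $M \ge 8$, so a valid configuration exists; translating it by any one of its elements (the $\oplus$ action of $\mathbb{F}_2^M$ on itself preserves Hamming distances) places $\theta^0 = 0$ in the set as required.
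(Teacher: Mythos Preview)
Your greedy sphere-packing argument is correct and is exactly the standard Varshamov--Gilbert construction; the paper does not give a proof at all but simply cites Tsybakov (2008, p.~104), where essentially this same argument appears. One arithmetic slip to fix: from $H_2(1/8) \le 7/8$ the correct conclusion is $N+1 \ge 2^{M(1-H_2(1/8))} \ge 2^{M/8}$, not $2^{7M/8}$ (you inverted the complement); since in fact $H_2(1/8)\approx 0.544$, the bound $N+1 \ge 2^{0.456M}$ is amply strong enough to give $N \ge 2^{M/8}$ for all $M\ge 8$, so the slip is harmless.
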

\begin{proof}
We refer readers to page 104 in \cite{tsybakov2008} for the proof of this lemma.
\end{proof}

\subsection{Derivation of the equality \eqref{Ksigma2}}\label{App-C}
\hspace{4mm} Let $f_{1}(\bm{X},Y)$ be the density of $(\bm{X},Y)$ and $f_{2}(\varepsilon)=\frac{1}{\sigma \sqrt{2\pi}}e^{-\frac{\varepsilon^{2}}{2\sigma^{2}}}$ be the density of $\varepsilon$ since we assume $\varepsilon\sim N(0,\sigma^{2})$, then the density of $P_{\bm{\alpha}_1,\beta_{1}}$ can be written as
\begin{equation*}
    \frac{\mathrm{d}P_{\bm{\alpha}_1,\beta_{1}}}{\mathrm{d}\mu}(Z,\bm{X},Y)=f_{1}(\bm{X},Y)f_{2}(Z-\bm{X}^{T}\bm{\alpha}_1-\innerproduct{Y}{\beta_{1}}),
\end{equation*}
where $\mu$ is a dominant measure on the space $\mathbb{R}\times\mathbb{R}^{p}\times L^{2}(\mathcal{T})$.
\par {Therefore, under the assumption of $f_{2}(\varepsilon)=\frac{1}{\sigma \sqrt{2\pi}}e^{-\frac{\varepsilon^{2}}{2\sigma^{2}}}$, we have
\begin{align*}
    \log\left(\frac{\mathrm{d}P_{{\bm\alpha}_1,\beta_{1}}}{\mathrm{d}P_{{\bm\alpha}_2,\beta_{2}}}\right)
    &=\log\left( \frac{f_{2}(Z-\bm{X}^{T}\bm{\alpha}_1-\innerproduct{Y}{\beta_{1}})}{f_{2}(Z-\bm{X}^{T}\bm{\alpha}_2-\innerproduct{Y}{\beta_{2}})} \right) \\
    &=\frac{1}{2\sigma^{2}}\left((Z-\bm{X}^{T}\bm{\alpha}_2-\innerproduct{Y}{\beta_{2}})^{2}-(Z-\bm{X}^{T}\bm{\alpha}_1-\innerproduct{Y}{\beta_{1}})^{2} \right) \\
    &=\frac{1}{\sigma^{2}}(Z-\bm{X}^{T}\bm{\alpha}_1-\innerproduct{Y}{\beta_{1}})(\bm{X}^T (\bm{\alpha}_1-\bm{\alpha}_2)+\innerproduct{Y}{\beta_{1}-\beta_{2}})+ \frac{1}{2\sigma^{2}}(\bm{X}^T (\bm{\alpha}_1-\bm{\alpha}_2)+\innerproduct{Y}{\beta_{1}-\beta_{2}})^{2}.
\end{align*}}

{Notice when the true parameter is $({\bm\alpha}_1,\beta_{1})$, we have $Z=\bm{X}^{T}\bm{\alpha}_1+\innerproduct{Y}{\beta_{1}}+\varepsilon$, based on which we obtain
\begin{align*}
    &\operatorname{E}_{\bm{\alpha}_1,\beta_{1}}(Z-\bm{X}^{T}\bm{\alpha}_1-\innerproduct{Y}{\beta_{1}})(\bm{X}^T (\bm{\alpha}_1-\bm{\alpha}_2)+\innerproduct{Y}{\beta_{1}-\beta_{2}})\\
    &=\operatorname{E}\varepsilon(\bm{X}^T (\bm{\alpha}_1-\bm{\alpha}_2)+\innerproduct{Y}{\beta_{1}-\beta_{2}})=\operatorname{E}[(\bm{X}^T (\bm{\alpha}_1-\bm{\alpha}_2)+\innerproduct{Y}{\beta_{1}-\beta_{2}})\cdot \operatorname{E}(\varepsilon| \bm{X},Y)]=0.
\end{align*}}
{Thus the Kullback-Leibler distance $$K(P_{\bm{\alpha}_1,\beta_{1}} | P_{\bm{\alpha}_2,\beta_{2}})=\operatorname{E}_{\bm{\alpha}_1,\beta_{1}}\left(\log\left(\frac{\mathrm{d}P_{\bm{\alpha}_1,\beta_{1}}}{\mathrm{d}P_{\bm{\alpha}_2,\beta_{2}}}\right)\right)= \frac{1}{2\sigma^{2}} \operatorname{E}(\bm{X}^T(\bm{\alpha}_1-\bm{\alpha}_2)+\innerproduct{Y}{\beta_{1}-\beta_{2}})^{2}.$$}

$\hfill\square$

\subsection{Minimax rate with given Euclidean predictors}\label{se:FIX}

\begin{ass}\label{ass-60}
 For a fixed $\bm{\alpha}_{0}^{*}\in\mathbb{R}^{p}$ and different $\beta_{1},\beta_{2}\in\mathcal{H}(K)$. We assume that the Kullback-Leibler distance between $P_{\bm{\alpha}_{0}^{*},\beta_{1}}$ and $P_{\bm{\alpha}_{0}^{*},\beta_{2}}$ can be bounded by
$$
K(P_{\bm{\alpha}_{0}^{*},\beta_{1}} | P_{\bm{\alpha}_{0}^{*},\beta_{2}}):=\operatorname{E}_{\bm{\alpha}_{0}^{*},\beta_{1}}\log\left(\frac{\mathrm{d}P_{\bm{\alpha}_{0}^{*},\beta_{1}}}{\mathrm{d}P_{\bm{\alpha}_{0}^{*},\beta_{2}}}\right)\le K_{\sigma^{2}}\operatorname{E}(\innerproduct{Y}{\beta_{1}-\beta_{2}})^{2},
$$
where $K_{\sigma^{2}}>0$ is a variance-dependent constant and $\operatorname{E}_{\bm{\alpha}_{0}^{*},\beta_{1}}$ is the expectation taken over $P_{\bm{\alpha}_{0}^{*},\beta_{1}}$.
\end{ass}
\begin{corollary}[Minimax rate with given Euclidean predictors]\label{thm-2}
Under the Assumptions \ref{ass-3} and \ref{ass-60}, suppose the eigenvalues $\{\tau_{k}:k\ge 1 \}$ of the operator $T$ decay as $\tau_{k}=t_{0}k^{-2r}$ for some $r, t_0 \in(0,\infty)$, then for $\rho\in(0,\frac{1}{8})$, there exists a sequence $\{N_{n}\}_{n\ge1}$ satisfying
\begin{equation}
    \log\left(N_{n}\right)\ge \left(\frac{8}{\log2}\right)^{-\frac{2r}{1+2r}}(\frac{t_0K_{\sigma^{2}}}{\rho})^{\frac{1}{1+2r}}n^{\frac{1}{1+2r}},
\end{equation}
such that when $n\ge\frac{\rho\log2}{8t_0K_{\sigma^{2}}}$, the excess prediction risk satisfies
\begin{equation*}
    \inf_{\tilde{\eta}}\sup_{\eta_{0}\in \mathbb{R}^{p}\times\mathcal{H}(K)}
    P\left(\mathcal{E}(\tilde{\eta})-\mathcal{E}(\eta_{0})\ge \frac{t_0}{2^{4(1+r)}}(\frac{8t_0K_{\sigma^{2}}}{\rho\log2})^{-\frac{2r}{1+2r}}n^{-\frac{2r}{1+2r}}\right)\ge \frac{\sqrt{N_{n}}}{1+\sqrt{N_{n}}}\left(1-2 \rho-\sqrt{\frac{2 \rho}{\log N_{n}}}\right),
\end{equation*}
where we identify the prediction rule $\tilde{\eta}$ as the arbitrary estimator $(\tilde{\bm{\alpha}},\tilde{\beta})$ based on the training samples $\{ (Z_{i},\bm{X}_{i},Y_{i}) \}_{i=1}^{n}$, and view $\eta_{0}$ as the true parameter $(\bm{\alpha}_{0},\beta_{0})\in\mathbb{R}^{p}\times\mathcal{H}(K)$. We emphasize the probability $P$ is taken over the product space of training samples $\{ (Z_{i},\bm{X}_{i},Y_{i}) \}_{i=1}^{n}$ generated by $\eta_{0}=(\bm{\alpha}_{0},\beta_{0})$.
\end{corollary}
In \cite{cai2012minimax}, a asymptotically minimax lower bound is derived for the FLM in the Theorem 1, which is also a corollary of our result on the non-asymptotic and constant-specified minimax lower bound when $N_n \to \infty$.
\begin{proof}
Let $M$ be the smallest integer greater than $b_{0}n^{\frac{1}{1+2r}}$, where $b_{0}$ will be defined in later proof. For a binary sequence $\theta=(\theta_{M+1},\cdots,\theta_{2M})\in \{0,1 \}^{M}$, define
\begin{equation*}
\beta_{\theta}=M^{-\frac{1}{2}}\sum\limits_{k=M+1}^{2M}\theta_{k}\K \varphi_{k}.
\end{equation*}
\par By applying $\innerproduct{\K\varphi_{j}}{\K\varphi_{k}}_{K}=\innerproduct{\varphi_{j}}{L_{K}\varphi_{k}}_{K}=\innerproduct{\varphi_{j}}{\varphi_{k}}=\delta_{j k}$, we can show $\beta_{\theta}\in \mathcal{H}(K)$, because
\begin{align*}
    \|\beta_{\theta}\|_{K}^{2}
    &=\|M^{-\frac{1}{2}}\sum\limits_{k=M+1}^{2M}\theta_{k}\K \varphi_{k}\|_{K}^{2}  =\sum\limits_{k=M+1}^{2M}M^{-1}\theta_{k}^{2}\|\K\varphi_{k}\|_{K}^{2} \le M^{-1}\sum\limits_{k=M+1}^{2M} \|\K\varphi_{k}\|_{K}^{2} =1.
\end{align*}

\par Using the Lemma \ref{VGlemma}, there exist a set $\Theta=\{\theta^{i}\}_{i=0}^{N}\subset\{0,1\}^{M}$ such that
\begin{center}
\rm{(i)} $\theta^{0}=(0,\cdots,0)$,~~
\rm{(ii)} $H(\theta^{i}, \theta^{j}) > \frac{M}{8}$ for all $i \neq j$,~~
\rm{(iii)} $N \geq 2^{\frac{M}{8}}$.
\end{center}

For $\eta_{0}=(\bm{\alpha}_{0},\beta_{0})$, let $P^{n}_{\bm{\alpha}_{0},\beta_{0}}$ be the joint distribution on the product space of training samples $\{(Z_{i},\bm{X}_{i},Y_{i})\}_{i=1}^{n}$ generated by the true parameter $(\bm{\alpha}_{0},\beta_{0})$, where $Z_{i}=\bm{X}_{i}^{T}\bm{\alpha}_{0}+\innerproduct{Y_{i}}{\beta_{0}}+\varepsilon_{i}$, and $P_{\bm{\alpha}_{0},\beta_{0}}$ be the distribution on a single sample $(Z,\bm{X},Y)$, where $Z=\bm{X}^{T}\bm{\alpha}_{0}+\innerproduct{Y}{\beta_{0}}+\varepsilon$. \par By the independence of the training samples, for fixed $\bm{\alpha}_{0}^{*}\in\mathbb{R}^{p}$ and different $\theta,\theta^{\prime}\in\Theta$, we have
\begin{equation*}
    \log\left(\frac{\mathrm{d}P^{n}_{\bm{\alpha}_{0}^{*},\beta_{\theta^{\prime}}}}{\mathrm{d}P^{n}_{\bm{\alpha}_{0}^{*},\beta_{\theta}}}(\{(Z_{i},\bm{X}_{i},Y_{i})\}_{i=1}^n)\right)=\sum\limits_{i=1}^{n}\log\left( \frac{\mathrm{d}P_{\bm{\alpha}_{0}^{*},\beta_{\theta^{\prime}}}}{\mathrm{d}P_{\bm{\alpha}_{0}^{*},\beta_{\theta}}}(Z_{i},\bm{X}_{i},Y_{i}) \right).
\end{equation*}
Using the Assumption \ref{ass-6}, we can bound the Kullback-Leibler distance between $P^{n}_{\bm{\alpha}_{0}^{*},\beta_{\theta^{\prime}}}$ and $P^{n}_{\bm{\alpha}_{0}^{*},\beta_{\theta}}$
\begin{equation*}
    K(P^{n}_{\bm{\alpha}_{0}^{*},\beta_{\theta^{\prime}}} | P^{n}_{\bm{\alpha}_{0}^{*},\beta_{\theta}})=\sum\limits_{i=1}^{n}\operatorname{E}_{\bm{\alpha}_{0}^{*},\beta_{\theta^{\prime}}}\log\left( \frac{\mathrm{d}P_{\bm{\alpha}_{0}^{*},\beta_{\theta^{\prime}}}}{\mathrm{d}P_{\bm{\alpha}_{0}^{*},\beta_{\theta}}}\right)\le n K_{\sigma^{2}} \operatorname{E}(\innerproduct{Y}{\beta_{\theta^{\prime}}-\beta_{\theta}})^{2}.
\end{equation*}
Noticing $\innerproduct{\K\varphi_{j}}{L_{C}\K\varphi_{k}}=\innerproduct{\varphi_{j}}{T\varphi_{k}}=\tau_{k}\delta_{j k}$, we have
\begin{align*}
    \operatorname{E}(\innerproduct{Y}{\beta_{\theta^{\prime}}-\beta_{\theta}})^{2}
    &=\innerproduct{\beta_{\theta^{\prime}}-\beta_{\theta}}{L_{C}(\beta_{\theta^{\prime}}-\beta_{\theta})} =\left\langle  M^{-\frac{1}{2}}\sum\limits_{k=M+1}^{2M}(\theta^{\prime}_{k}-\theta_{k})\K \varphi_{k} , M^{-\frac{1}{2}}\sum\limits_{k=M+1}^{2M}(\theta^{\prime}_{k}-\theta_{k})L_{C}\K \varphi_{k}   \right\rangle \\
    &=M^{-1} \sum\limits_{k=M+1}^{2M} (\theta^{\prime}_{k}-\theta_{k})^{2}\tau_{k} \le M^{-1}\tau_{M}\sum\limits_{k=M+1}^{2M} (\theta^{\prime}_{k}-\theta_{k})^{2} =M^{-1}\tau_{M}H(\theta^{\prime},\theta) \le \tau_{M}= t_0M^{-2r},
\end{align*}
from which we have $ K(P^{n}_{\bm{\alpha}_{0}^{*},\beta_{\theta^{\prime}}} | P^{n}_{\bm{\alpha}_{0}^{*},\beta_{\theta}})\le t_0 n K_{\sigma^{2}} M^{-2r}$.

\par If we let $b_{0}:=(\frac{8 t_0 K_{\sigma^{2}} }{ \log2})^{\frac{1}{1+2r}} \rho^{-\frac{1}{1+2r}}$, then for any $\rho\in(0,\frac{1}{8})$, we have
\begin{equation*}
    \frac{1}{N}\sum\limits_{j=1}^{N} K(P_{\bm{\alpha}_{0}^{*},\beta_{\theta^{j}}} | P_{\bm{\alpha}_{0}^{*},\beta_{\theta^{0}}})\le t_0 n K_{\sigma^{2}} M^{-2r} \le \rho \log\left(2^{\frac{M}{8}}\right)\le \rho\log(N).
\end{equation*}

\par For $\theta\in\Theta$ and a fixed $\bm{\alpha}_{0}^{*}\in \mathbb{R}^{p}$, we consider the prediction rule $\eta_{\theta}(\bm{X},Y):=\bm{X}^{T}\bm{\alpha}_{0}^{*}+\innerproduct{Y}{\beta_{\theta}}$. For different $\theta,\theta^{\prime}\in\Theta$, when the true parameter is $(\bm{\alpha}_{0}^{*},\beta_{\theta})$, the excess prediction risk for $\eta_{\theta^{\prime}}$ is
\begin{align*}
    \mathcal{E}(\eta_{\theta^{\prime}})-\mathcal{E}(\eta_{\theta})
    &=\operatorname{E}\left[\bm{X}^{*T}(\bm{\alpha}_{0}^{*}-\bm{\alpha}_{0}^{*})+\innerproduct{Y^{*}}{\beta_{\theta^{\prime}}-\beta_{\theta}}\right]^{2}=\operatorname{E}(\innerproduct{Y^{*}}{\beta_{\theta^{\prime}}-\beta_{\theta}})^{2}\\
    &=M^{-1}\sum\limits_{k=M+1}^{2M}(\theta^{\prime}_{k}-\theta_{k})^{2}\tau_{k}\ge M^{-1}\tau_{2M} \sum\limits_{k=M+1}^{2M}(\theta^{\prime}_{k}-\theta_{k})^{2} =M^{-1}\tau_{2M}H(\theta^{\prime},\theta)\\
    &\ge M^{-1}t_0(2M)^{-2r}\frac{M}{8}=t_02^{-(2r+3)}M^{-2r}.
\end{align*}

Since $M$ is the smallest integer greater than $b_{0}n^{\frac{1}{1+2r}}$, so when $b_{0}n^{\frac{1}{1+2r}}\ge 1\Leftrightarrow n\ge \frac{\rho\log2}{8t_0K_{\sigma^{2}}}$, $M\le 2b_{0}n^{\frac{1}{1+2r}}$. Thus we obtain the lower bound for $\mathcal{E}(\eta_{\theta^{\prime}})-\mathcal{E}(\eta_{\theta})$
\begin{equation*}
    \mathcal{E}(\eta_{\theta^{\prime}})-\mathcal{E}(\eta_{\theta})\ge t_02^{-(2r+3)}(2 b_{0} n^{\frac{1}{1+2r}})^{-2r}=t_02^{-(3+4r)}(\frac{8t_0K_{\sigma^{2}}}{\log2})^{-\frac{2r}{1+2r}}\rho^{\frac{2r}{1+2r}}n^{-\frac{2r}{1+2r}}.
\end{equation*}

\par For fixed $\bm{\alpha}_{0}^{*}\in\mathbb{R}^{p}$, consider the set $\Xi:=\{(\bm{\alpha}_{0}^{*},\beta_{\theta}):\theta\in\Theta\}$.
By the Lemma \ref{KLlemma}, we have
\begin{equation*}
    \inf_{\tilde{\eta}}\sup_{\eta_{0}\in\Xi}
    P\left(\mathcal{E}(\tilde{\eta})-\mathcal{E}(\eta_{0})\ge t_02^{-4(1+r)}(\frac{8t_0K_{\sigma^{2}}}{\log2})^{-\frac{2r}{1+2r}}\rho^{\frac{2r}{1+2r}}n^{-\frac{2r}{1+2r}}\right)\ge \frac{\sqrt{N}}{1+\sqrt{N}}\left(1-2 \rho-\sqrt{\frac{2 \rho}{\log N}}\right).
\end{equation*}

Notice $\sup\limits_{\eta_{0}\in\Xi}
P\{\mathcal{E}(\tilde{\eta})-\mathcal{E}(\eta_{0})\ge\cdots\}\le
\sup\limits_{\eta_{0}\in\mathbb{R}^{p}\times\mathcal{H}(K)}
P\{\mathcal{E}(\tilde{\eta})-\mathcal{E}(\eta_{0})\ge\cdots\}$ and $\log(N)\ge \frac{\log2}{8}M$, we have the desired conclusion.
\end{proof}
\section{Conclusions and Future Studies}\label{sec-9}
\hspace{4mm}

Recently, the PFLM has raised a sizable amount of challenging problems in functional data analysis. Numerous studies focus on the asymptotic convergence rate. However, we analyze the kernel ridge estimator for the RKHS-based PFLM and obtain the non-asymptotic upper bound for the corresponding excess prediction risk. Our work to drive the optimal upper bound weakens the common assumptions in the existing literature on (partially) functional linear regressions. The optimal bound reveals that the prediction consistency holds under the setting where the number of non-functional parameters $p$ slightly increases with the sample size $n$. For fixed $p$, the convergence rate of the excess prediction risk attains the optimal minimax convergence rate under the eigenvalue decay assumption of the covariance operator.

More works could be done to study the non-asymptotic upper bound for the double penalized partially functional regressions. The penalization for the non-functional parameters could be Lasso, Elastic-net, or their generalizations. The proposed non-asymptotic upper bound is novel and substantially beneficial. It is also of interest to do non-asymptotic testing based on large deviation bounds for $\|\hatalpha{n}-\bm{\alpha}_{0}\|^{2}$ and $\|T^{\frac{1}{2}}(\hat{f}_{n}-f_{0})\|^{2}$.

\section{Acknowledgments}\label{sec-10}
\hspace{4mm}

H. Zhang is supported in part by NSFC Grant No.12101630 and the University of Macau under UM Macao Talent Programme (UMMTP-2020-01).

\bibliographystyle{apalike}
\bibliography{reference}

\end{document}